\documentclass[11pt,a4paper]{article}

\usepackage{amsmath,amssymb,amscd,amsthm}
\usepackage{graphics}
\usepackage{enumerate}
\input xy
\xyoption{all}

\usepackage{comment}
\usepackage{tikz}

\usetikzlibrary{arrows}

\usepackage[paperwidth=210mm,
            paperheight=297mm,
            a4paper,
            twoside,
            top=1in, left=1.2in, right=1.2in, bottom=1in]{geometry}

\usepackage[active]{srcltx}

\newtheorem{lemma}{Lemma}

\newtheorem{prop}[lemma]{Proposition}
\newtheorem{cor}[lemma]{Corollary}

\newtheorem{defi}[lemma]{Definition}
\newtheorem{thm}[lemma]{Theorem}
\newtheorem{ex}[lemma]{Example}
\newtheorem{rmk}[lemma]{Remark}

\newcommand{\oln}{{\bf n}}
\newcommand{\olh}{\overline{h}}

\newcommand{\uh}{\mathcal{U}(\mathfrak{h})}
\newcommand{\ug}{\mathcal{U}(\mathfrak{sl}_{n+1})}
\newcommand{\g}{\mathfrak{sl}_{n+1}}
\newcommand{\poln}{\mathbb{C}[h_{1}, \ldots , h_{n}]}

\newcommand{\bb}[1]{\mathbb{#1}}
\newcommand{\fr}[1]{\mathfrak{#1}}
\newcommand{\ca}[1]{\mathcal{#1}}
\newcommand{\pol}[1]{\mathbb{C}[h_{1}, \ldots , #1 , \ldots , h_{n}]}

\usepackage{float}
\restylefloat{figure}

\begin{document}

\title{Simple $\mathfrak{sl}_{n+1}$-module structures on $\uh$}
\author{Jonathan Nilsson}
\date{}
\maketitle

\begin{abstract}
\noindent We study the category $\ca{M}$ consisting of $\ug$-modules whose restriction to 
$\uh$ is free of rank $1$, in particular we classify isomorphism classes of objects in $\ca{M}$ and determine their submodule structure.
This leads to new $\mathfrak{sl}_{n+1}$-modules.
For $n=1$ we also find the central characters and 
derive an explicit formula for taking tensor product with a simple finite dimensional module.
\end{abstract}

\section{Introduction and description of the results}

Classification of simple modules is an important first step in understanding the representation theory
of an algebra. The Lie algebra $\mathfrak{sl}_2$ is the only simple finite dimensional complex Lie algebra
for which some version of such a classification exists, see \cite{Bl,Ma}. For other Lie algebras no full
classification is known, however, various natural classes of simple modules are classified, in particular,
simple finite dimensional modules (see e.g. \cite{Ca,Dix}), simple highest weight modules (see e.g. \cite{Dix,Hum2}),
simple weight modules with finite dimensional weight spaces (see \cite{Fe,Fu,Mathieu}),
Whittaker modules (see \cite{Kos}), and Gelfand-Zetlin modules (see \cite{DFO}).

This paper contributes with a new family of simple modules for the Lie algebra 
$\mathfrak{sl}_{n+1}=\mathfrak{sl}_{n+1}(\mathbb{C})$,
the algebra of $(n+1) \times (n+1)$ complex matrices with trace zero with the Lie bracket $[a,b]=ab-ba$. This algebra
is important as, by Ado's Theorem (see e.g. \cite{Dix}), every finite dimensional complex Lie algebra 
is isomorphic to a subalgebra of $\mathfrak{sl}_{n+1}$ for some $n$. Furthermore,
the representation theory of $\mathfrak{sl}_{n+1}$ has (recently discovered) connection to knot invariants, 
see e.g. \cite{Stroppel,Maz2}. The idea of our construction originates in the attempt to understand whether
the general setup for study of Whittaker modules proposed in \cite{BM} can be used to construct some 
explicit families of simple $\mathfrak{sl}_{n+1}$-modules (in analogy as, for example, was done for the 
Virasoro algebra in \cite{MW,MZ,LLZ}).

Let $\mathfrak{h}$ denote the standard Cartan subalgebra of $\mathfrak{sl}_{n+1}$ consisting of all diagonal matrices.
One of the most classical families of $\mathfrak{sl}_{n+1}$-modules is the family of so-called {\em weight modules}
which are the modules on which $\mathfrak{h}$ acts diagonalizably. In the present paper we study the category $\ca{M}$ 
of $\mathfrak{sl}_{n+1}$-modules  defined by the ``opposite condition'', namely, as the full subcategory 
of $\mathfrak{sl}_{n+1}$-mod consisting of modules which are free of rank $1$ when restricted to $\uh$.
Here is a classical $\mathfrak{sl}_{2}$-example (see \cite{AP}):

\begin{ex}
 Let $n=1$ and let $h=\frac{1}{2}(e_{1,1}-e_{2,2})$. Then $\bb{C}[h]$ becomes an $\mathfrak{sl}_{2}$-module with the action given by
\begin{align*}
 h \cdot f(h) &= hf(h)\\
 e_{1,2} \cdot f(h) &= hf(h-1)\\
 e_{2,1} \cdot f(h) &= -hf(h+1).
\end{align*}
We note that $\mathrm{Res}_{\uh}^{\mathcal{U}(\mathfrak{sl}_{2})} \bb{C}[h]$ is isomorphic to $_{\uh}\uh$ (the module $\uh$ with the natural left action) so under this action,
 $\bb{C}[h]$ is free of rank $1$ and this module belongs to $\ca{M}$.
\end{ex}
Since $\uh$ acts freely on modules in $\ca{M}$, these modules are infinite dimensional and has no weight vectors.
In particular $\ca{M}$ contains no weight modules. We shall later also see that the action of the subalgebra
$\mathfrak{n}_{+}$ of upper triangular matrices is generically not locally finite, so the modules in $\ca{M}$ are generically not Whittaker modules in the sense of \cite{Kos}, or quotients of such.
The modules of $\ca{M}$ are generically not even Whittaker modules in the sense of \cite{BM}.

Classifying the objects of $\ca{M}$ is equivalent to finding all possible ways of extending $_{\uh}\uh$ to an $\mathfrak{sl}_{n+1}$-module. 
In Section~\ref{sl2}  we focus on the case $n=1$. In Theorem~\ref{sl2classi} we classify the modules of $\ca{M}$ for $\mathfrak{sl}_{2}$ and determine their Jordan-H\"{o}lder composition.
 It turns out that the situation is analogous to that of Verma modules (see \cite{Dix,Hum2}) 
 in the following sense: the modules of $\ca{M}$ are generically simple, and each reducible module of $\ca{M}$
 has a unique submodule which also belongs to $\ca{M}$, and a corresponding finite dimensional quotient. In Section~\ref{tensoring} we solve a version of the Clebsch-Gordan problem in $\ca{M}$ for $n=1$:
 we give an explicit decomposition formula for $M \otimes E$ where $M$ is a simple object of $\ca{M}$ and $E$ is a simple finite dimensional module.

In Section~\ref{sln} we generalize some of the results to $\mathfrak{sl}_{n+1}$ for arbitrary $n\geq 1$. In particular, we classify isomorphism classes of objects in $\ca{M}$ completely in Theorem~\ref{class}.
 Here follows a special case of the result:
\begin{thm}
For $1 \leq k \leq n$, let $h_{k}:=e_{k,k}-\frac{1}{n+1}\sum_{m=1}^{n+1}e_{m,m} \in \mathfrak{sl}_{n+1}$. Then for each $b \in \bb{C}$, the vector space $\bb{C}[h_{1}, \ldots, h_{n}]$
 is a simple $\mathfrak{sl}_{n+1}$-module under the action
 \[ \begin{cases}
 h_{i} \cdot f(h_{1}, \ldots, h_{n}) = h_{i}f(h_{1}, \ldots, h_{n}) & 1 \leq i \leq n,  \\ 
 e_{i,n+1} \cdot f(h_{1}, \ldots, h_{n}) = (b+\sum_{m=1}^{n}h_{m})(h_{i}-b-1)f(h_{1} \ldots,h_{i}-1, \ldots, h_{n}), &  1 \leq i \leq n,  \\ 
 e_{n+1,j} \cdot f(h_{1}, \ldots, h_{n}) =-f(h_{1}, \ldots,h_{j}+1, \ldots, h_{n}) &   1 \leq j \leq n, \\ 
 e_{i,j} \cdot f(h_{1}, \ldots, h_{n}) =(h_{i}-b-1)f(h_{1}, \ldots ,h_{i}-1, \ldots, h_{j}+1, \ldots, h_{n}) &  1 \leq i,j \leq n.  \\ 
\end{cases}
\]
Moreover, for $n > 1$, different choices of $b$ gives nonisomorphic modules.
For details, compare this with Definition~\ref{structure} where the module structure above corresponds to the module labeled $M_{b}^{\varnothing}$.
\end{thm}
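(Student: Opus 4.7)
The plan is to verify three things in turn: (a) the displayed formulas define a $\g$-module structure on $\poln$; (b) this module is simple; (c) for $n > 1$, different choices of $b$ give pairwise non-isomorphic modules.

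For (a), I would check that the commutator relations among the generators $h_{i}$ and $e_{i,j}$ of $\g$ are reproduced by the formulas. The relation $[h_{i}, h_{j}] = 0$ is automatic since $h_{i}$ acts by multiplication, and the brackets $[h_{i}, e_{j,k}]$ reduce to tracking how the shifts $h_{k} \mapsto h_{k} \pm 1$ interact with multiplication by $h_{i}$. The substantial cases are $[e_{i,n+1}, e_{n+1,j}]$ --- which must reproduce $e_{i,j}$ when $i \neq j$ and the combination $h_{i} + \sum_{k} h_{k}$ (equal to $h_{i} - h_{n+1}$) when $i = j$ --- and $[e_{i,j}, e_{k,l}]$ with all indices in $\{1, \ldots, n\}$. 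Each case reduces to applying both operators to a generic polynomial and simplifying, with the key cancellations coming from identities like $(b + \sum_{m} h_{m} + 1) - (b + \sum_{m} h_{m}) = 1$.

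For (b), let $N$ be a nonzero submodule of $\poln$ and fix $0 \neq f \in N$. The crucial observation is that $T_{j} := -e_{n+1,j}$ acts as the translation $f(h_{1}, \ldots, h_{n}) \mapsto f(h_{1}, \ldots, h_{j}+1, \ldots, h_{n})$, so $T_{j} - 1 \in \ug$ acts as a finite-difference operator strictly lowering the $h_{j}$-degree of any polynomial depending on $h_{j}$. Iteratively applying $(T_{j} - 1)^{d_{j}}$ for $j = 1, \ldots, n$, with $d_{j}$ chosen as the current $h_{j}$-degree, therefore reduces $f$ to a nonzero scalar. Hence $1 \in N$, and the multiplication action of $h_{i}$ then forces $N$ to contain every monomial, so $N = \poln$.

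For (c), any $\g$-isomorphism $\phi \colon M_{b} \to M_{b'}$ restricts to a $\uh$-module isomorphism between two copies of the free rank-$1$ module; any such map is multiplication by a unit of $\poln$, hence by a nonzero scalar. The compatibility $\phi(e_{i,n+1} \cdot f) = e_{i,n+1} \cdot \phi(f)$ then forces the polynomial identity $(b + \sum_{m} h_{m})(h_{i} - b - 1) = (b' + \sum_{m} h_{m})(h_{i} - b' - 1)$ in $\poln$. Fixing some $m \neq i$, which exists precisely when $n > 1$, and comparing the coefficient of $h_{m}$ on both sides gives $b = b'$. The main obstacle is the bookkeeping in (a): the four overlapping cases of the action formula must interlock across many bracket relations; parts (b) and (c) are short once (a) is established.
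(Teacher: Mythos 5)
Your proposal is correct, and parts (a) and (c) follow essentially the paper's own route: the paper verifies the commutation relations for the whole family $M_b^S$ by a direct computation with indicator functions, and its non-isomorphism argument (end of the proof of Theorem~\ref{class}) is exactly yours --- any isomorphism in $\ca{M}$ is multiplication by a nonzero scalar, hence forces $e_{i,n+1}\cdot 1$ to coincide in both modules, and comparing the coefficient of $h_m$ with $m\neq i$ in $(\olh+b)(h_i-b-1)=(\olh+b')(h_i-b'-1)$ gives $b=b'$ when $n>1$. Where you genuinely diverge is simplicity. The paper (Theorem~\ref{simples}) first constructs the basis $H_{k_1,\ldots,k_n}=\prod_i\prod_{j=0}^{k_i}(h_i-b+j)$, shows the operators $A_i=(h_i-b)(e_{n+1,i}+1)$ act diagonally on it with one-dimensional joint eigenspaces, concludes that every submodule is spanned by a subset of this basis, and only then lowers the indices $k_i$ to $-1$. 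You instead apply the difference operators $-e_{n+1,j}-1$, which act as $f\mapsto\sigma_j^{-1}(f)-f$ and strictly lower $\deg_j$ (the content of Lemma~\ref{deg}), directly to an arbitrary nonzero element of the submodule, reducing it to a nonzero constant without any eigenbasis. This shortcut works precisely because $S=\varnothing$ makes each $e_{n+1,j}$ a pure shift with unit coefficient; for general $S\neq\oln$ the shifts carry polynomial factors $(h_j-b)$ and the paper's eigenbasis machinery earns its keep. For the specific module of the statement your route is shorter and entirely elementary, while the paper's buys a uniform treatment of all $M_b^S$, including the reducible case $S=\oln$.
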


In Section~\ref{simplicity} we determine the submodule structure of the objects in $\ca{M}$. It turns out that the objects generically are simple, while the reducible ones
have length $2$ with a simple finite dimensional top.\\

After this paper was finished we heard about some related results~\cite{TZ} which are to appear shortly.\\

\noindent
{\bf Acknowledgements}
I am very grateful to Volodymyr Mazorchuk for his helpful advice while I was writing this paper.
I also want to thank Professor Kaiming Zhao for informing us about~\cite{TZ}.

\section{Preliminaries}
\label{section2}
In this section we collect some of the basic definitions and results needed for studying our module categories.
We denote by $\mathbb{N}$ and $\mathbb{N}_0$ the sets of positive integers and nonnegative integers, respectively.

\subsection{Some categories of modules}
In this subsection let $\mathfrak{g}$ be any finite dimensional complex Lie algebra admitting a triangular decomposition
\begin{displaymath}
 \mathfrak{g} = \mathfrak{n_{-}} \oplus \mathfrak{h} \oplus \mathfrak{n_{+}}.
\end{displaymath}
As usual we let $\mathcal{U}(\mathfrak{g})\text{-Mod}$ be the category of all left $\mathfrak{g}$ modules while we denote the subcategory of finite dimensional modules by $\mathfrak{g}\text{-fmod}$.
We also let $\ca{O}$ be the full subcategory of  $\mathcal{U}(\mathfrak{g})\text{-Mod}$ consisting of finitely generated weight modules which are locally $\mathcal{U}(\mathfrak{n_{+}})$-finite~\cite{BGG,Hum2}.
Now define $\mathfrak{M}$ to be the full subcategory of $\mathfrak{g}\text{-Mod}$ consisting of modules whose restriction to $\uh$ is finitely generated.
\begin{prop}
\begin{enumerate}[$($i$)$]
 \item\label{cat1}  The category $\mathfrak{M}$ is abelian.
 \item\label{cat2}  The only weight modules in $\mathfrak{M}$ are the finite dimensional modules. In particular $\mathfrak{M} \cap \ca{O} = \mathfrak{g}\text{-fmod}.$
\end{enumerate}
\end{prop}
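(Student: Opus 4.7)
The plan is to handle the two parts separately. Since $\mathfrak{M}$ is a full subcategory of the abelian category $\mathcal{U}(\mathfrak{g})\text{-Mod}$, part (i) reduces to checking closure under finite direct sums, kernels, and cokernels. Direct sums of finitely generated $\uh$-modules are finitely generated, and quotients of finitely generated $\uh$-modules are finitely generated, which takes care of biproducts and cokernels. The only subtle point is closure under kernels, i.e., under passing to $\uh$-submodules. For this I would invoke the fact that $\mathfrak{h}$ is an abelian Lie algebra of finite dimension $d$, so $\uh$ is the polynomial algebra $\bb{C}[h_1,\ldots,h_d]$, hence Noetherian by Hilbert's basis theorem; consequently every $\uh$-submodule of a finitely generated $\uh$-module is itself finitely generated.

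For part (ii), suppose $M \in \mathfrak{M}$ is a weight module, decomposed as $M = \bigoplus_{\lambda \in \mathfrak{h}^*} M_\lambda$. The key observation is that $\uh$ acts on the weight space $M_\lambda$ through the one-dimensional character $\lambda$, so as a $\uh$-module $M_\lambda$ is isomorphic to a direct sum of copies of the one-dimensional module $\bb{C}_\lambda$. In particular, $M_\lambda$ is finitely generated over $\uh$ if and only if $\dim_{\bb{C}} M_\lambda < \infty$. From here I would show both that only finitely many $M_\lambda$ are nonzero and that each such $M_\lambda$ is finite-dimensional. Choosing finitely many $\uh$-generators of $M$ and expanding them into weight components shows that they jointly involve only finitely many weights, and since $\uh$ preserves weight spaces no other $M_\lambda$ can be nonzero. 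Applying the Noetherian argument from part (i) to each such $M_\lambda$, viewed as a $\uh$-submodule of $M$, then forces $M_\lambda$ to be finitely generated, and by the observation above to be finite-dimensional.

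Adding the weight spaces, any weight module in $\mathfrak{M}$ is finite-dimensional, and conversely a finite-dimensional weight module is trivially finitely generated over $\uh$, giving the first assertion as an equivalence. The identity $\mathfrak{M} \cap \ca{O} = \mathfrak{g}\text{-fmod}$ is then immediate: members of $\ca{O}$ are weight modules by definition, so $\mathfrak{M}\cap\ca{O}$ consists precisely of the finite-dimensional weight modules, which are the objects of $\mathfrak{g}\text{-fmod}$. I do not foresee a serious obstacle; the whole argument hinges on the Noetherianity of $\uh$ together with the elementary fact that a $\uh$-module with purely scalar $\uh$-action is finitely generated exactly when it has finite dimension over $\bb{C}$.
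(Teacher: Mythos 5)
Your proposal is correct and follows essentially the same route as the paper: Noetherianity of $\uh$ (a polynomial algebra in finitely many variables) handles part (i), and the observation that $\uh$ acts by scalars on weight vectors, so that finite generation over $\uh$ forces finite dimension, handles part (ii). The paper phrases (ii) slightly more tersely via a weight-vector basis decomposing $M$ into one-dimensional $\uh$-modules, but the substance is identical.
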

\begin{proof}
To prove \eqref{cat1}, first note that the category $\mathfrak{M}$ is closed under taking quotients and direct sums.
Moreover, $\uh$ is isomorphic to the polynomial algebra in finitely many variables so it is noetherian.
Thus any submodule of a finitely generated module is finitely generated and $\mathfrak{M}$ is closed under taking submodules. It follows that $\mathfrak{M}$ is abelian.

To prove \eqref{cat2}, first note that any finite-dimensional weight module is generated as an $\uh$-module 
by any basis, so $\mathcal{U}(\mathfrak{g})\text{-fmod} \subset \mathfrak{M}$.
On the other hand, if $M$ is a weight module and $\{v_{i}| i \in I\}$ is a basis of $M$ consisting of weight vectors, then as an $\uh$-module, $M$ decomposes as a direct sum of one dimensional modules:
\[M=\bigoplus_{i\in I} \bb{C}v_{i}.\]
In particular, if $I$ is infinite, no finite subset can generate $M$ as a $\uh$-module.
\end{proof}
Now define $\ca{M}$ to be the full subcategory of $\mathcal{U}(\mathfrak{g})\text{-Mod}$ consisting of objects whose restriction to $\uh$ are free of rank $1$:
\[\ca{M}:= \{M \in \mathcal{U}(\mathfrak{g})\text{-Mod} | Res_{\uh}^{\mathcal{U}(\mathfrak{g})} M \simeq _{\uh}{\uh}\}.\] The goal of this paper is
 to understand the category $\ca{M}$ for $\mathfrak{g}=\mathfrak{sl}_{n+1}$.
We also define \[\overline{\ca{M}}:= \{M \in \mathcal{U}(\mathfrak{g})\text{-Mod} | Res_{\uh}^{\mathcal{U}(\mathfrak{g})}M \text{ is free of finite rank}\}.\]
Then $\overline{\ca{M}}$ is closed under finite direct sums and under taking tensor products with finite dimensional modules.
We now have inclusions of full subcategories as follows:
\[\ca{M} \subset \overline{\ca{M}} \subset \mathfrak{M} \subset \mathcal{U}(\mathfrak{g})\text{-Mod}.\]
For $M \in \overline{\ca{M}}$ we note that $\uh$ acts freely on $M$. Thus the sum of the weight spaces of $M$ is zero and, in particular, $\mathcal{U}(\mathfrak{g})\text{-fmod} \cap \overline{\ca{M}}=\varnothing$.

\subsection{A basis of $\uh$}
For the rest of this paper, we fix $\mathfrak{g}=\mathfrak{sl}_{n+1}$.
For each $k \in \oln :=\{1,2,\ldots, n\}$ define
\begin{equation}
\label{hdef}
 h_{k}:=e_{k,k}-\frac{1}{n+1}\sum_{i=1}^{n+1}e_{i,i}.
\end{equation}
Then $\{h_{1}, \ldots , h_{n}\}$ generate $\mathcal{U}(\mathfrak{h})$ and we can identify $\mathcal{U}(\mathfrak{h}) \simeq \mathbb{C}[h_{1}, \ldots , h_{n}]$.
An advantage of this choice of generators is that they satisfy the relations
\[ [h_{k},e_{i,n+1}] = \delta_{k,i}e_{i,n+1} \quad \text{ for all } i,k \in \oln.\]
We also define \[\olh := \sum_{i=1}^{n} h_{i}\] and note that $[\olh, e_{i,n+1}] = e_{i,n+1}$ for all $i\in \oln$.
With respect to the basis 
$$\{e_{i,j} | 1 \leq i,j \leq n+1; i \neq j\} \cup \{h_{1}, \ldots, h_{n}\}$$ of $\g$, the bracket
 operation is given by the following lemma.
\begin{lemma}
\label{lemma1}
For $i,i',j,j' \in {\bf n+1}$; $i \neq j$; $i'\neq j'$; and $k,k' \in \oln$ we have
\begin{align*}
[e_{i,j},e_{i',j'}] & = \delta_{j,i'}e_{i,j'} - \delta_{i,j'}e_{i',j}\\
[h_{k},e_{i,j}]&=(\delta_{k,i}-\delta_{k,j})e_{i,j}\\
[h_{k},h_{k'}]&=0.
\end{align*}
\end{lemma}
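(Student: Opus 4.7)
The statement is essentially a bookkeeping lemma collecting standard matrix-unit commutation relations, so my plan is to reduce everything to the well-known identity for $\mathfrak{gl}_{n+1}$ and then specialise.

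First I would establish the top relation by working inside $\mathfrak{gl}_{n+1}$, where $e_{i,j}e_{i',j'}=\delta_{j,i'}e_{i,j'}$ follows directly from ordinary matrix multiplication (the only way to get a nonzero product is to match the column index of the first factor with the row index of the second). Subtracting the two orderings yields
\[
 [e_{i,j},e_{i',j'}] = \delta_{j,i'}e_{i,j'} - \delta_{i,j'}e_{i',j},
\]
and this identity transfers to $\mathfrak{sl}_{n+1}$ since the off-diagonal $e_{i,j}$ already lie there. This is where essentially all the content is, and the only care needed is with the index-matching Kronecker deltas.

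Next I would derive the middle relation from the top one combined with the definition \eqref{hdef} of $h_k$. Observe that the correction term $c:=\frac{1}{n+1}\sum_{i=1}^{n+1}e_{i,i}$ is a scalar multiple of the identity matrix in $\mathfrak{gl}_{n+1}$, hence central; therefore $[h_k,e_{i,j}]=[e_{k,k},e_{i,j}]$. Applying the top relation to $[e_{k,k},e_{i,j}]$ gives $\delta_{k,i}e_{k,j}-\delta_{j,k}e_{i,k}$. Using $i\neq j$, I would then observe that at most one of the deltas can be nonzero, and that when either is nonzero the matrix unit on the right-hand side collapses to $e_{i,j}$, giving exactly $(\delta_{k,i}-\delta_{k,j})e_{i,j}$. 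The bottom relation $[h_k,h_{k'}]=0$ is immediate because each $h_k$ is diagonal in $\mathfrak{gl}_{n+1}$, and diagonal matrices commute.

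There is really no obstacle here; the only thing one has to be slightly careful with is the case analysis in the middle relation, where the two raw terms from the matrix-unit commutator both become proportional to $e_{i,j}$ precisely because the hypothesis $i\neq j$ prevents both deltas from firing simultaneously. No further machinery is required.
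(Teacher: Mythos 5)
Your proof is correct and matches the paper's approach, which simply declares the first and third identities obvious and notes the second follows from the definition of $h_k$; you have just filled in the routine matrix-unit computations explicitly. No issues.
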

\begin{proof}
The first and third identities are obvious and the second identity follows from \eqref{hdef}.
\end{proof}

We also introduce some notation for our polynomial rings. Define
\[\ca{P}:=\poln,\] and for each $i \in \oln$ define
\[\ca{P}_{i}:=\pol{h_{i-1},h_{i+1}}.\]
Note that $\ca{P} \simeq \ca{P}_{i}[h_{i}] \simeq (\ca{P}_{i} \cap \ca{P}_{j})[h_{i},h_{j}]$ and so on. 

\subsection{Gradings}
It turns out to be helpful to use some different gradings on $\ca{P}$.
 For each $i \in \oln$ we define
\[\deg_{i} h_{1}^{d_{1}}h_{2}^{d_{2}} \cdots h_{n}^{d_{n}}:=d_{i}.\] In other words, $deg_{i} (f)$ is the degree of $f$ when considered
 as a polynomial in a single variable $h_{i}$ and with coefficients in $\ca{P}_{i}$. 
For convenience we let $\deg_{i} 0 := -1$. We also define \[\mathtt{c}_{i}:\ca{P} \rightarrow \ca{P}_{i},\] to be the map taking the leading
 coefficient of a given polynomial with respect to the grading $\deg_{i}$. For example, for $f=h_{1}h_{2}+h_{2} \in \bb{C}[h_{1},h_{2},h_{3}]$ we have
\[\mathtt{c}_{1}(f) = h_{2}, \quad \mathtt{c}_{2}(f) = h_{1}+1 \quad \text{ and }  \quad\mathtt{c}_{3}(f)=f.\]
Note that each map $\mathtt{c}_{i}$ is nonlinear but multiplicative:
\[\mathtt{c}_{i}(fg) = \mathtt{c}_{i}(f)\mathtt{c}_{i}(g) \quad \text{ for all }\quad  f,g\in\ca{P}.\]

\subsection{Automorphisms of $\uh$}
\label{sigmas}
For each $i \in \oln$, define an algebra automorphism \[ \sigma_{i}: \uh \longrightarrow \uh\]
by \[\sigma_{i}(h_{k}):=h_{k}-\delta_{i,k}.\]
Then $\sigma_{1}, \ldots , \sigma_{n}$ generate an abelian subgroup of $Aut(\uh)$. For $f\in \ca{P}$ we explicitly have
\[\sigma_{i}(f(h_{1}, \ldots, h_{n})) = f(h_{1}, \ldots, h_{i}-1, \ldots , h_{n}).\]
Note also that $\mathtt{c}_{i}(\sigma_{i} f) = \mathtt{c}_{i}(f)$.

We shall later have to solve some equations involving these automorphisms, so we collect some basic facts about them here.
\begin{lemma}
\label{deg}
 Let $f \in \ca{P}$ be a nonzero polynomial. Then
 \[\deg_{i} (\sigma_{i}(f)-f) = (\deg_{i}f)-1.\]
\end{lemma}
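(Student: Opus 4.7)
The plan is to write $f$ as a polynomial in the single variable $h_i$ with coefficients in $\ca{P}_i$ and then compute $\sigma_i(f)-f$ directly.

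First I would set $d := \deg_i f$ and write
\[
 f = \sum_{k=0}^{d} a_{k} h_{i}^{k}, \qquad a_{k} \in \ca{P}_{i}, \ a_{d} \neq 0.
\]
Since $\sigma_i$ fixes every $h_j$ with $j \neq i$, it fixes every element of $\ca{P}_i$, so
\[
 \sigma_{i}(f) - f \;=\; \sum_{k=0}^{d} a_{k}\bigl((h_{i}-1)^{k} - h_{i}^{k}\bigr).
\]
A binomial expansion shows $(h_i-1)^k - h_i^k = -k\, h_i^{k-1} + (\text{lower order in }h_i)$, so this polynomial has $\deg_i \leq k-1$, with equality precisely when $k \geq 1$ (using that $\bb{C}$ has characteristic $0$).

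Next I would split on $d$. If $d = 0$, then $f \in \ca{P}_i$, hence $\sigma_i(f) = f$, so $\sigma_i(f) - f = 0$ and $\deg_i(\sigma_i(f)-f) = -1 = d-1$ by the convention for the zero polynomial. If $d \geq 1$, collecting terms in the sum above shows that the coefficient of $h_i^{d-1}$ equals $-d\, a_d$ plus contributions coming only from the terms with $k \leq d-1$, which contribute at most degree $d-2$ in $h_i$. Since $a_d \neq 0$ and $d \neq 0$ in $\bb{C}$, the coefficient of $h_i^{d-1}$ in $\sigma_i(f)-f$ is nonzero, so $\deg_i(\sigma_i(f)-f) = d-1$, as required.

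There is no real obstacle here; the only point requiring any care is to keep track that the ``$\deg_i$'' is taken with coefficients in $\ca{P}_i$ (so that the nonvanishing of $-d\,a_d$ is what matters, not any further cancellation within $\ca{P}_i$) and to invoke characteristic zero to rule out $d \equiv 0$.
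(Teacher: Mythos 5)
Your proof is correct and follows essentially the same route as the paper: write $f$ as a polynomial in $h_i$ over $\ca{P}_i$, expand $(h_i-1)^k-h_i^k$, and observe that the coefficient of $h_i^{d-1}$ is $-d\,a_d\neq 0$ in characteristic zero. Your explicit treatment of the $d=0$ case via the convention $\deg_i 0=-1$ is a small point of added care over the paper's version, but the argument is the same.
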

\begin{proof}
Write $f=\sum_{k=0}^{m} h_{i}^{k} f_{k}$ with $f_{k} \in \ca{P}_{i}$ and $f_{m} \neq 0$. Then 
\[\sigma_{i}(f)-f = \sum_{k=0}^{m} ((h_{i}-1)^{k}-h_{i}^{k}) f_{k}, \] and we see that the coefficient at $h_{i}^{m}$ is $0$ while  the coefficient at $h_{i}^{m-1}$ is precisely $mf_{m}$ which is nonzero.
\end{proof}
\begin{lemma}
\label{lemmasigmaeq}
 For every $g \in \ca{P}$ the equation 
\begin{equation}
\label{sieq}
 \sigma_{i}(f)-f=g
\end{equation}
has a solution $f$ and this solution is unique up to addition of elements from $\ca{P}_{i}$.
\end{lemma}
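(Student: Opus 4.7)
The plan is to handle uniqueness first (it is essentially immediate from Lemma~\ref{deg}) and then obtain existence by induction on $\deg_i g$, constructing a solution one leading term at a time.

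For uniqueness, suppose $f, f' \in \ca{P}$ both satisfy \eqref{sieq}. Then $\sigma_i(f-f')-(f-f')=0$. If $f-f'$ were nonzero with $\deg_i(f-f')\geq 1$, Lemma~\ref{deg} would force $\deg_i 0 = \deg_i(f-f')-1 \geq 0$, contradicting the convention $\deg_i 0 = -1$. Hence $\deg_i(f-f')\leq 0$, i.e.\ $f-f' \in \ca{P}_i$. Conversely, every element of $\ca{P}_i$ is visibly fixed by $\sigma_i$, so $\ca{P}_i$ is exactly the space of solutions of the homogeneous equation. This establishes the ``unique up to $\ca{P}_i$'' part.

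For existence, I would induct on $m:=\deg_i g$. The base case $m=-1$ (i.e.\ $g=0$) is trivial: take $f=0$. For the inductive step, let $c := \mathtt{c}_i(g) \in \ca{P}_i$ and set
\[
f_0 := -\tfrac{1}{m+1}\,h_i^{m+1}\,c.
\]
A direct expansion gives $(h_i-1)^{m+1}-h_i^{m+1} = -(m+1)h_i^{m}+(\text{terms of }\deg_i<m)$, so
\[
\sigma_i(f_0)-f_0 = h_i^m c + r
\]
with $\deg_i r < m$. Thus $g - (\sigma_i(f_0)-f_0)$ has $\deg_i$ strictly less than $m$, so by the induction hypothesis it equals $\sigma_i(f_1)-f_1$ for some $f_1 \in \ca{P}$. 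Then $f := f_0 + f_1$ solves \eqref{sieq}.

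The only potentially delicate point is ensuring that the leading-term computation really strictly decreases $\deg_i$ at each step, but this is exactly the content of Lemma~\ref{deg}, so there is no genuine obstacle: the proof is a clean induction driven by the preceding lemma.
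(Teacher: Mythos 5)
Your proof is correct and follows essentially the same route as the paper: uniqueness via Lemma~\ref{deg} applied to the homogeneous equation, and existence by killing the leading term with $-\tfrac{1}{m+1}h_i^{m+1}\mathtt{c}_i(g)$ and inducting on degree (the paper merely packages this as a monomial case plus $\ca{P}_i$-linearity rather than a single induction on $\deg_i g$).
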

\begin{proof}
 We note that the left side of \eqref{sieq} is linear in $f$. Thus the general solution is the sum of a particular solution and an arbitrary solution to the corresponding homogeneous equation
\begin{equation}
\label{homeq}
 \sigma_{i}(f)-f=0.
\end{equation}
Applying $\deg_{i}$ to \eqref{homeq}, we obtain $(\deg_{i}f) - 1=-1$. This means that any solution to \eqref{homeq} 
has the form  $f \in \ca{P}_{i}$.
Next we claim that $\sigma_{i}(f)-f=h_{i}^{k}$ has a solution $f_{k}$ for each $k \in \bb{N}_{0}$. This is true by induction: for $k=0$ a solution is $f_{0}=-h_{i}$, and supposing it has a solution for all $p<k$
we note that $\sigma_{i}(-\frac{h_{i}^{k+1}}{k+1})-(-\frac{h_{i}^{k+1}}{k+1}) = h_{i}^{k} +g$ for some $g \in \bb{C}[h_{i}]$ with $\deg g < k$. Thus
\[f=-\frac{h_{i}^{k+1}}{k+1} - \tilde{f}\]
is a solution to $\sigma_{i}(f)-f=h_{i}^{k}$, where $\tilde{f}$ is any solution to 
$\sigma_{i}(\tilde{f})-\tilde{f}=g$. This proves that $\sigma_{i}(f)-f=h_{i}^{k}$ has a solution $f_{k}$ for every $k\in\bb{N}_{0}$. 
Now, finally, we note that for $f \in \ca{P}_{i}$ we have $\sigma_{i}(fp)-fp=f(\sigma_{i}(p)-p)$, so, writing \[g=\sum_{k=0}^{m}g_{k}h_{i}^{k}\] for some
$g_{k} \in \ca{P}_{i}$, a solution to $\sigma_{i}(f)-f=g$ is \[f=\sum_{k=0}^{m}g_{k}f_{k}.\]
\end{proof}

\subsection{Automorphisms of $\ug$}
\label{functors}
Denote by $\tau$ the involutive automorphism of $\ug$ defined by $\tau: e_{i,j} \mapsto -e_{j,i}$. It is easy to check that, when restricted to $\uh$, the automorphism $\tau$ satisfies
\[\tau \sigma_{i} = \sigma_{i}^{-1} \tau \text{ for all } i \in \oln.\] 
Note also that on $\uh$ we explicitly have $\tau(f(h_{1}, \ldots, h_{n})) = f(-h_{1}, \ldots, -h_{n})$.

For each $a=(a_{1}, \ldots , a_{n+1}) \in (\bb{C}^{*})^{n+1}$, we also define an automorphism \[\varphi_{a}: \ug \longrightarrow \ug\]
by \[\varphi_{a}: e_{i,j} \mapsto \frac{a_{i}}{a_{j}} e_{i,j}.\]
This is well defined since 
\begin{align*}
 \varphi_{a}(e_{i,j}e_{k,l}-e_{k,l}e_{i,j}) &= \frac{a_{i}a_{k}}{a_{j}a_{l}}(e_{i,j}e_{k,l}-e_{k,l}e_{i,j})
 = \frac{a_{i}a_{k}}{a_{j}a_{l}}([e_{i,j},e_{k,l}])\\
 &= \frac{a_{i}a_{k}}{a_{j}a_{l}}(\delta_{k,j}e_{i,l}- \delta_{i,l}e_{k,j})
 = \delta_{k,j}\frac{a_{i}}{a_{l}}e_{i,l}-  \delta_{i,l}\frac{a_{k}}{a_{j}}e_{k,j}\\
 &= \varphi_{a}(\delta_{k,j}e_{i,l}-  \delta_{i,l}e_{k,j})
 = \varphi_{a}([e_{i,j},e_{k,l}]).
\end{align*}
Note that \[ \tau \circ \varphi_{a} = \varphi_{a^{-1}} \circ \tau,\]
where the inverse is taken componentwise.

Each automorphism $\varphi \in \ug$ induces a functor
\[\mathrm{F}_{\varphi}: \g\text{-Mod} \longrightarrow \g\text{-Mod}\] which maps each module to itself (as a set) but with a new action $\bullet$ defined by
\[x\bullet v:=\varphi(x)\cdot v.\] The functor $\mathrm{F}_{\varphi}$ maps morphisms to themselves.

We will write $\mathrm{F}_{a}:=\mathrm{F}_{\varphi_{a}}$ and $\overline{M}:=F_{\tau}(M)$.
Note that $\tau: \overline{\uh} \rightarrow \uh$ is an isomorphism of left $\uh$-modules.
This follows from the fact that for $f,g \in \uh$ we have $\tau(f \bullet g) = \tau(\tau(f)\cdot g)= \tau(\tau(f) g) = f\cdot \tau (g)$.

We collect some of the properties of the above functors in a lemma. Multiplication and inversion in $(\bb{C}^{*})^{n+1}$ are defined pointwise.
\begin{lemma}
\label{functorlemma}
The functors $\mathrm{F}_{a}$ and $\mathrm{F}_{\tau}$ have the following properties:
 \begin{enumerate}[$($i$)$]
  \item\label{fun1} $\mathrm{F}_{(1,1, \ldots, 1)} = \mathrm{Id}_{\g\text{-}\mathrm{Mod}}$.
  \item\label{fun2} $\mathrm{F}_{a} = \mathrm{F}_{\lambda a}$ for all $\lambda \in \bb{C}^{*}$.
  \item\label{fun3} $\mathrm{F}_{a} \circ \mathrm{F}_{b} = \mathrm{F}_{a \cdot b}$.
  \item\label{fun4} $\mathrm{F}_{a}^{-1} \simeq \mathrm{F}_{a^{-1}}$.
  \item\label{fun5} For any $M \in \g\text{-}\mathrm{fmod}$ we have $\mathrm{F}_{a}(M) \simeq M \simeq \mathrm{F}_{\tau}(M)$.
  \item\label{fun6} $\mathrm{F}_{a}\ca{M}=\ca{M} = \mathrm{F}_{\tau} \ca{M}$.
  \item\label{fun7} For any $M \in \ca{M}$, $\mathrm{F}_{a}(M) \simeq M$ only if $a\in \bb{C}^{*}(1,1,\ldots, 1)$.
  \item\label{fun9} $\mathrm{F}_{\tau} \simeq \mathrm{F}_{\tau^{-1}}$.
  \item\label{fun10} $\mathrm{F}_{\tau} \circ  \mathrm{F}_{a} \simeq \mathrm{F}_{a^{-1}} \circ \mathrm{F}_{\tau}$.
  \item\label{fun8} $\mathrm{F}_{a}$ and $\mathrm{F}_{\tau}$ are auto-equivalences.
 \end{enumerate}
\end{lemma}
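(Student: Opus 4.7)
The plan is to group the ten items into three clusters. Items (i)--(iv) and (viii)--(x) are formal consequences of the definitions of $\varphi_{a}$ and $\tau$; items (v)--(vi) assert compatibility with the subcategories $\g\text{-fmod}$ and $\ca{M}$; and (vii) is the only claim carrying genuine content, on which I expect to spend the most work.

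For the formal items I verify everything on the generators $e_{i,j}$. Items (i) and (ii) are immediate from the formula $\varphi_{a}(e_{i,j}) = (a_{i}/a_{j}) e_{i,j}$. For (iii), a direct computation gives $\varphi_{b} \circ \varphi_{a}(e_{i,j}) = (a_{i}b_{i})/(a_{j}b_{j})\,e_{i,j} = \varphi_{a \cdot b}(e_{i,j})$, and translating this identity of $\ug$-automorphisms into functor composition (which reverses the order) yields $\mathrm{F}_{a} \circ \mathrm{F}_{b} = \mathrm{F}_{a \cdot b}$. Item (iv) is the special case $b=a^{-1}$ combined with (i). For (viii), the involutivity $\tau^{2} = \mathrm{Id}_{\ug}$ gives $\tau^{-1} = \tau$, hence $\mathrm{F}_{\tau^{-1}} = \mathrm{F}_{\tau}$; for (x), $\mathrm{F}_{a}$ has quasi-inverse $\mathrm{F}_{a^{-1}}$ by (iv), while $\mathrm{F}_{\tau}^{2} \simeq \mathrm{Id}$ makes $\mathrm{F}_{\tau}$ its own quasi-inverse. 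Item (ix) is the functorial restatement of $\varphi_{a} \circ \tau = \tau \circ \varphi_{a^{-1}}$, which follows by pre- and post-composing the identity $\tau \varphi_{a} = \varphi_{a^{-1}} \tau$ (already noted before the lemma) with $\tau$.

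For (vi), note that both $\varphi_{a}$ and $\tau$ restrict to algebra automorphisms of $\uh$: $\varphi_{a}$ fixes each $h_{k}$ since it fixes the diagonal matrix units, while $\tau$ sends $f(h_{1},\ldots,h_{n})$ to $f(-h_{1},\ldots,-h_{n})$. Pulling back a free rank-one $\uh$-module along an algebra automorphism of $\uh$ produces another free rank-one $\uh$-module, so both functors preserve $\ca{M}$. For (v), the automorphism $\varphi_{a}$ coincides with conjugation by $g = \mathrm{diag}(a_{1},\ldots,a_{n+1})$; after normalizing using (ii) so that $g \in \mathrm{SL}_{n+1}(\bb{C})$, the map $v \mapsto g \cdot v$ intertwines $M$ with $\mathrm{F}_{a}(M)$ for any finite-dimensional $M$ (which integrates to the simply connected group). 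A parallel argument handles $\mathrm{F}_{\tau}$.

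The substantive step is (vii), and its crux is an application of the simplicity of $\mathfrak{sl}_{n+1}$. Suppose $\phi : M \to \mathrm{F}_{a}(M)$ is a $\g$-module isomorphism with $M \in \ca{M}$. Since $\varphi_{a}|_{\uh} = \mathrm{Id}_{\uh}$, the map $\phi$ is also an automorphism of $M$ as a free rank-one $\uh$-module; because the units of $\uh$ are $\bb{C}^{*}$, $\phi$ is multiplication by a nonzero scalar. The $\g$-equivariance of $\phi$ then forces $(x - \varphi_{a}(x)) \cdot v = 0$ for all $x \in \g$ and $v \in M$. Specializing to $x = e_{i,j}$ with $i \neq j$ gives $(1 - a_{i}/a_{j})\,e_{i,j} \cdot M = 0$, so for each such pair either $a_{i} = a_{j}$ or $e_{i,j}$ annihilates $M$. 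The latter cannot occur: the annihilator of $M$ is a Lie ideal in the simple algebra $\mathfrak{sl}_{n+1}$, and if it contained the nonzero element $e_{i,j}$ it would be all of $\g$, forcing $\uh$ to act trivially on $M$ and contradicting the freeness assumption. Therefore $a_{i} = a_{j}$ for all $i \neq j$, which means $a \in \bb{C}^{*}(1,\ldots,1)$. This simplicity argument is the only non-formal step; every other item reduces to a direct verification on generators.
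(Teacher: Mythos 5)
Most of your proposal is sound, and in two places it is arguably cleaner than the paper's own argument. Items (i)--(iv), (viii)--(x) and (vi) are handled essentially as in the paper (which dismisses the first four as immediate from the definitions); your explicit observation that composition of twisting functors reverses the order of the automorphisms, $\mathrm{F}_{a}\circ\mathrm{F}_{b}=\mathrm{F}_{\varphi_{b}\circ\varphi_{a}}$, is worth keeping. For (vii) the paper argues concretely: the isomorphism must be multiplication by a scalar, and $e_{i,j}\cdot 1\neq 0$ because the action of $e_{i,j}$ on any $f$ is $(e_{i,j}\cdot 1)$ times a twist of $f$, so $e_{i,j}\cdot 1=0$ would force $[e_{i,j},e_{j,i}]\cdot 1=0$ with $[e_{i,j},e_{j,i}]$ a nonzero element of $\fr{h}$, contradicting freeness. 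Your route through ``the annihilator of $M$ in $\g$ is a Lie ideal, hence zero by simplicity'' reaches the same contradiction and is valid; it is marginally more robust since it does not invoke the explicit form of the action from Proposition~\ref{prop1}. For the $\mathrm{F}_{a}$ half of (v), realizing $\varphi_{a}$ as conjugation by $\mathrm{diag}(a_{1},\ldots,a_{n+1})$, normalizing the determinant, and integrating to $\mathrm{SL}_{n+1}(\bb{C})$ is correct and more transparent than the paper's comparison of characters.

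The genuine gap is the sentence ``a parallel argument handles $\mathrm{F}_{\tau}$''. The automorphism $\tau\colon x\mapsto -x^{T}$ is \emph{outer} for $n\geq 2$ (it induces the nontrivial involution of the Dynkin diagram), so it is not conjugation by any element of $\mathrm{SL}_{n+1}(\bb{C})$ and no intertwiner of the form $v\mapsto g\cdot v$ can exist. In fact the claim $\mathrm{F}_{\tau}(M)\simeq M$ is false for $n\geq 2$: since $\tau$ negates $\fr{h}$ and exchanges positive and negative root vectors, $\mathrm{F}_{\tau}(L(\lambda))$ is a simple module with lowest weight $-\lambda$, i.e.\ $\mathrm{F}_{\tau}(L(\lambda))\simeq L(-w_{0}\lambda)\simeq L(\lambda)^{*}$; applied to the natural $(n+1)$-dimensional module this gives its dual, which is not isomorphic to it once $n\geq 2$. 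You should be aware that the paper's own proof of this half of (v) shares the defect: it asserts that $\dim M_{\lambda}=\dim M_{-\lambda}$ in every finite dimensional module, which holds for $\mathfrak{sl}_{2}$ (where $-1$ lies in the Weyl group) but not for $\mathfrak{sl}_{n+1}$ with $n\geq 2$. So no argument can close this step as stated; the correct assertion is $\mathrm{F}_{\tau}(M)\simeq M^{*}$, and the $\tau$-part of (v) is only safe where the paper actually uses it, namely in the $\mathfrak{sl}_{2}$ sections. Everything else in your write-up stands.
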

\begin{proof}
Claims \eqref{fun1} - \eqref{fun4} follow directly from the definition of $\mathrm{F}_{a}$. 
 To prove claim \eqref{fun5} we note that if
 $v$ is a weight vector of weight $\lambda$ in $M$, then for all $h \in \fr{h}$, in $\mathrm{F}_{a}(M)$ we have
\[h\bullet v=\varphi_{a}(h)\cdot v = \lambda(h)v,\] which shows that $v$ is still a weight vector of weight $\lambda$ in $\mathrm{F}_{a}(M)$.
Similarly, in $\mathrm{F}_{\tau}(M)$ we have
\[h\bullet v=\tau(h)\cdot v = -h \cdot v = -\lambda(h)v,\]
so $v$ is still a weight vector, but now with weight $-\lambda$.
But finite dimensional modules are uniquely determined up to isomorphism by their characters (that is, their occurring weights and the dimension of the corresponding weight spaces),
so we have $\mathrm{F}_{a}(M) \simeq M$. Similarly, we know that the dimension of the weight space of $\lambda$ and $-\lambda$ are equal in any finite dimensional module,
so we also obtain $\mathrm{F}_{\tau}(M) \simeq M$.
To prove claim  \eqref{fun6}, let $M \in \ca{M}$. We first note that $\mathrm{F}_{a}(M)$ is still equal to $M$ as a set, and the action of $\fr{h}$ is the same since $\varphi$ fixes $\uh$ pointwise 
so $\mathrm{F}_{a}(M)$ is still free of rank $1$ over $\uh$. Similarly, as we noted before, $\tau: \overline{\uh} \rightarrow \uh$ is an isomorphism so $\mathrm{F}_{\tau}(M)$ is still free of rank $1$ over $\uh$. 
 To prove claim \eqref{fun7},
suppose $\Phi:\mathrm{F}_{a}(M) \rightarrow M$ is an isomorphism where $a \not\in \bb{C}^{*}(1,1,\ldots, 1)$. Since $\Phi(f)=f\bullet \Phi(1)=f \cdot \Phi(1)$, $\Phi$
 is determined by $\Phi(1)$ and the same is true for
 $\Phi^{-1}$. Since $1=\Phi^{-1}(\Phi(1)) = \Phi^{-1}(1)\Phi(1)$, we obtain $\Phi(1)=c \in \mathbb{C}^{*}$ and thus $\Phi(f)=cf$. Pick indices $i,j$ such that $a_{i} \neq a_{j}$. 
Then
\begin{equation}
\label{eqphi1}
c(e_{i,j} \cdot 1) =e_{i,j} \cdot \Phi(1) = \Phi(e_{i,j}\bullet 1)=
\Phi(\frac{a_{i}}{a_{j}}e_{i,j} \cdot 1)=c\frac{a_{i}}{a_{j}}(e_{i,j} \cdot 1).
\end{equation}
 However,  $(e_{i,j} \cdot 1)$ must be nonzero, as otherwise
 $[e_{i,j},e_{j,i}] \cdot 1=0$ which is impossible since $[e_{i,j},e_{j,i}] \in \fr{h}$. Thus \eqref{eqphi1} does not hold, which shows that there exists no such isomorphism $\Phi$.
  Claim \eqref{fun9} and \eqref{fun9} are obvious from the corresponding relations in $\ug$ and claim \eqref{fun10} is a straightforward calculation.
Finally,  claim \eqref{fun8} is clear since $\mathrm{F}_{a} \circ \mathrm{F}_{a^{-1}} \simeq \mathrm{Id}_{\g-\text{Mod}} \simeq \mathrm{F}_{a^{-1}} \circ \mathrm{F}_{a}$
 and $\mathrm{F}_{\tau} \circ \mathrm{F}_{\tau} \simeq \mathrm{Id}_{\g-\text{Mod}}$.
\end{proof}

\subsection{Action of Chevalley generators}
Let $M \in \ca{M}$. Since $M$ is free of rank $1$ we have an isomorphism $\varphi: M \rightarrow \uh$ in $\uh$-Mod. By defining $x \bullet f :=\varphi( x \cdot \varphi^{-1}(f))$
 for all $x \in \ug$, $f \in \uh$, the space $\uh$ becomes a $\ug$-module isomorphic to $M$ via the map $\varphi$. Thus, to classify the isomorphism classes of objects in $\ca{M}$,
 we need only consider all possible extensions of the natural left $\uh$-action on $_{\uh}\uh$ to $\ug$.
\begin{prop} 
\label{prop1}
 Let $M \in \ca{M}$. Then, identifying $M$ as a vector space with $\ca{P}$, the action of $\g$ on $M$ is completely determined by the action of the Chevalley generators on $1 \in M$.
 Explicitly, for $f\in \ca{P}$ we have
\begin{displaymath}
\begin{array}{rcll}
 h_{k} \cdot f &=& h_{k} f &  k \in \oln,\\
 e_{i,n+1} \cdot f &=& p_{i} \sigma_{i} f  &  i \in \oln,\\
 e_{n+1,j} \cdot f &=& q_{i} \sigma_{j}^{-1} f  &  j \in \oln,\\
 e_{i,j} \cdot f &=& \big( p_{i}\sigma_{i}(q_{j})-q_{j}\sigma_{j}^{-1}(p_{i})\big) \sigma_{i}\sigma_{j}^{-1}f  \qquad&  i,j \in \oln, i \neq j,\\
\end{array}
\end{displaymath}
where  $p_{i}:=e_{i,n+1} \cdot 1$ and $q_{j}:=e_{n+1,j} \cdot 1$ for all $i,j \in \oln$.
\end{prop}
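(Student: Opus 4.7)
The plan is to fix an isomorphism $M \simeq {}_{\uh}\uh$ of left $\uh$-modules, send the chosen generator of $M$ to $1$, and identify the underlying space of $M$ with $\ca{P}$ so that $\uh$ acts by left multiplication. This instantly gives $h_k \cdot f = h_k f$ and leaves only the matrix units $e_{i,n+1}$, $e_{n+1,j}$ and $e_{i,j}$ (for $i,j \in \oln$) to describe. The strategy is, for each such generator $x$, to move $x$ past an arbitrary polynomial $f$ inside $\ug$ using the bracket relations of Lemma~\ref{lemma1}, and then to evaluate both sides on $1$.

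For $e_{i,n+1}$, Lemma~\ref{lemma1} gives $[h_k,e_{i,n+1}]=\delta_{k,i}e_{i,n+1}$, equivalently $e_{i,n+1}h_k=(h_k-\delta_{k,i})e_{i,n+1}$. A routine induction on the total degree of $f \in \ca{P}$ then yields
$$e_{i,n+1}\, f = \sigma_i(f)\, e_{i,n+1} \qquad \text{in } \ug,$$
and applying both sides to $1 \in M$ produces $e_{i,n+1}\cdot f = p_i\,\sigma_i(f)$. The same argument, with $[h_k,e_{n+1,j}]=-\delta_{k,j}\,e_{n+1,j}$ (here $\delta_{k,n+1}=0$ since $k\in\oln$), yields $e_{n+1,j}\,f = \sigma_j^{-1}(f)\,e_{n+1,j}$ in $\ug$, hence $e_{n+1,j}\cdot f = q_j\,\sigma_j^{-1}(f)$ in $M$.

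For the mixed generator $e_{i,j}$ with $i\neq j$ in $\oln$, Lemma~\ref{lemma1} gives $[e_{i,n+1},e_{n+1,j}]=e_{i,j}$, so
$$e_{i,j}\cdot f = e_{i,n+1}\cdot\bigl(q_j\,\sigma_j^{-1}(f)\bigr) - e_{n+1,j}\cdot\bigl(p_i\,\sigma_i(f)\bigr).$$
Plugging in the formulas just obtained, together with the fact that $\sigma_i$ and $\sigma_j$ commute when $i\neq j$, produces the stated expression $\bigl(p_i\,\sigma_i(q_j)-q_j\,\sigma_j^{-1}(p_i)\bigr)\,\sigma_i\sigma_j^{-1}f$. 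Since $\{h_k,e_{i,n+1},e_{n+1,j}\}$ generates $\g$ as a Lie algebra, this determines the entire action. The only genuine obstacle is verifying the intertwining identity $e_{i,n+1}f=\sigma_i(f)e_{i,n+1}$ for all $f$, but this reduces to a one-variable induction in $h_i$ since the other $h_k$ commute with $e_{i,n+1}$; everything else is bookkeeping of the $\sigma$'s.
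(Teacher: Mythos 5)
Your proposal is correct and takes essentially the same route as the paper: both derive $e_{i,n+1}\cdot(h_k f)=(h_k-\delta_{k,i})(e_{i,n+1}\cdot f)$ from the bracket $[h_k,e_{i,n+1}]=\delta_{k,i}e_{i,n+1}$, induct to obtain $e_{i,n+1}\cdot f=p_i\sigma_i f$ (and analogously $e_{n+1,j}\cdot f=q_j\sigma_j^{-1}f$), and then use $e_{i,j}=[e_{i,n+1},e_{n+1,j}]$ together with commutativity of the $\sigma$'s to get the last formula. The only cosmetic difference is that you package the induction as the identity $e_{i,n+1}f=\sigma_i(f)e_{i,n+1}$ in $\ug$ and then evaluate at $1$, whereas the paper carries out the same computation directly on the module.
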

\begin{proof}
Since $M \in \ca{M}$, we know that, both as a vector space and as an $\fr{h}$-module, $M$ is isomorphic to $\ca{P}$. In other words, the action of $\fr{h}$ on $M$ can be written explicitly as 
$h_{k} \cdot f = h_{k}f$ for all $ f \in \ca{M}, \;k \in \oln$.

Now for each $i \in \oln$ we define 
\begin{displaymath}
 p_{i}:=e_{i,n+1} \cdot 1 \in \ca{P}, \qquad\qquad
 q_{i}:=e_{n+1,i} \cdot 1 \in \ca{P}.
\end{displaymath}
Since
$\delta_{k,i}(e_{i,n+1}\cdot f)=[h_{k},e_{i,n+1}]\cdot f = h_{k}(e_{i,n+1}\cdot f)- e_{i,n+1}\cdot (h_{k}f)$, 
we obtain \[e_{i,n+1}\cdot (h_{k}f) = (h_{k}-\delta_{k,i})(e_{i,n+1}\cdot f),\]
which shows that the action of the element 
$e_{i,n+1}$ can be determined inductively from its action on $1$. Explicitly we get
$e_{i,n+1}\cdot f = p_{i}\sigma_{i}f$, using the operator $\sigma_{i}$ from Section~\ref{sigmas}.
Analogous calculations show that $e_{n+1,j}\cdot f = q_{j}\sigma_{j}^{-1}f$.
But then, for all $i,j \in \oln$ with $i\neq j$ we have 
\[e_{i,j}\cdot f = [e_{i,n+1},e_{n+1,j}] \cdot f = e_{i,n+1} \cdot e_{n+1,j} \cdot f - e_{n+1,j} \cdot e_{i,n+1} \cdot f.\]
Explicitly this gives us
\[e_{i,j}\cdot f = \big( p_{i}\sigma_{i}(q_{j})-q_{j}\sigma_{j}^{-1}(p_{i})\big) \sigma_{i}\sigma_{j}^{-1}f.\]
This means that the action of $\g$ on $M$ is completely determined by the $(2n)$-tuple $(p_{1}, \ldots, p_{n},q_{1}, \ldots, q_{n}) \in \ca{P}^{2n}$ as stated in the proposition.
\end{proof}

Note that not every tuple $(p_{1}, \ldots, p_{n},q_{1}, \ldots, q_{n})$ determines an $\g$-module.
We now turn to the converse problem: determine which choices of $(p_{1}, \ldots, p_{n},q_{1}, \ldots, q_{n}) \in \ca{P}^{2n}$
 give rise to a module structure on $\ca{P}$ by the definition of the action as in Proposition~\ref{prop1}.

\begin{prop}
\label{symmprop}
 Suppose that a $2n$-tuple $(p_{1}, \ldots, p_{n},q_{1}, \ldots, q_{n}) \in \ca{P}^{2n}$ gives a $\g$-module via the action in Proposition~\ref{prop1}.
 Then so does the $2n$-tuple
 $$(-\tau(q_{1}), \ldots, -\tau(q_{n}),-\tau(p_{1}), \ldots, -\tau(p_{n})).$$
\end{prop}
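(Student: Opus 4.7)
The plan is to realize the new $2n$-tuple as the parameters of $\mathrm{F}_{\tau}(M)$ after transporting that module back to the standard form of Proposition~\ref{prop1}, rather than verifying any $\g$-relations by hand. By Lemma~\ref{functorlemma}\eqref{fun6} we already know that $\overline{M} := \mathrm{F}_{\tau}(M)$ lies in $\ca{M}$, so the only real task is to read off which $2n$-tuple represents it in the presentation of Proposition~\ref{prop1}.

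As a set $\overline{M} = \ca{P}$, but its $\uh$-action is the bullet action $h \bullet f = \tau(h) \cdot f$, which is not the natural one. I would transport along the left $\uh$-module isomorphism $\tau: \overline{\uh} \to \uh$ recorded in Section~\ref{functors}, introducing a module $M'$ whose underlying space is $\ca{P}$ with $\g$-action
\[ x \ast g := \tau\bigl(\tau(x) \cdot \tau(g)\bigr), \qquad x \in \g, \; g \in \ca{P}. \]
By construction $\tau: \overline{M} \to M'$ is an isomorphism of $\g$-modules, and $M'$ lies in $\ca{M}$ with the natural $\uh$-action $h_k \cdot g = h_k g$, so it is in standard form.

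Next I would read off the new parameters by computing $p'_i := e_{i,n+1} \ast 1$ and $q'_j := e_{n+1,j} \ast 1$. Using $\tau(e_{i,n+1}) = -e_{n+1,i}$, $\tau(1) = 1$, and Proposition~\ref{prop1}, one gets
\[ p'_i = \tau(-e_{n+1,i} \cdot 1) = -\tau(q_i), \qquad q'_j = \tau(-e_{j,n+1} \cdot 1) = -\tau(p_j). \]
To verify that $\ast$ really agrees on arbitrary $g \in \ca{P}$ with the action prescribed by Proposition~\ref{prop1} for these parameters, I would invoke the identity $\tau\sigma_i = \sigma_i^{-1}\tau$ from Section~\ref{functors}: it yields
\[ e_{i,n+1} \ast g = \tau\bigl(-q_i\, \sigma_i^{-1} \tau(g)\bigr) = -\tau(q_i)\, \sigma_i g = p'_i\, \sigma_i g, \]
and analogously for $e_{n+1,j}$. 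The formulas for $e_{i,j}$ with $i,j \in \oln$ are then automatic, since Proposition~\ref{prop1} derives them from $[e_{i,n+1}, e_{n+1,j}]$.

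The main obstacle is purely bookkeeping: keeping the three module structures (the original $M$, the bullet structure on $\overline{M}$, and the transported $M'$) straight, and applying $\tau^2 = \mathrm{id}$ together with $\tau\sigma_i = \sigma_i^{-1}\tau$ in the right order. No Serre-type relations need to be checked directly, because $M'$ is obtained from $M$ by composing two module equivalences, so it automatically defines a $\g$-module.
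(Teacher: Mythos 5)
Your proposal is correct and follows essentially the same route as the paper: apply $\mathrm{F}_{\tau}$, transport the resulting action back to $\uh$ along the isomorphism $\tau:\overline{\uh}\to\uh$, and read off $p_i'=-\tau(q_i)$, $q_j'=-\tau(p_j)$ using $\tau(e_{i,n+1})=-e_{n+1,i}$ and $\tau\sigma_i=\sigma_i^{-1}\tau$. The paper's proof performs exactly this computation, with the same justification that no relations need to be reverified since the new structure is obtained by transport along module equivalences.
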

\begin{proof}
 Let $M$ be the module defined by $(p_{1}, \ldots, p_{n},q_{1}, \ldots, q_{n})$. Applying the functor $\mathrm{F}_{\tau}$ from Section~\ref{functors} to $M$ we obtain a module $\overline{M}$ which is isomorphic to
 $\overline{\uh}$ and hence to $\uh$ as a left $\uh$ module via the restriction of $\tau$ to $\uh$ (where we identify $\overline{M}$ with $\uh$ as sets).
 Transferring the action of $\ug$ on $\overline{M}$ to $\uh$ via this map $\tau$ we explicitly obtain a $\ug$-module structure on $\uh$ given explicitly by
 \[ e_{i,j} * f =\tau ( e_{i,j} \bullet \tau (f)) = \tau(\tau(e_{i,j}) \cdot \tau(f)).\]
In particular, we obtain $h_{k}*f=h_{k}f$ and
 \[e_{i,n+1} * f = \tau(\tau(e_{i,n+1}) \cdot \tau(f))=\tau(-e_{n+1,i} \cdot \tau(f)) = \tau(-q_{i}\sigma_{i}^{-1}(\tau(f))) =  -\tau(q_{i})\sigma_{i}(f),\]
as well as
 \[e_{n+1,j} * f = \tau(\tau(e_{n+1,j}) \cdot \tau(f))=\tau(-e_{j,n+1} \cdot \tau(f)) = \tau(-p_{i}\sigma_{i}(\tau(f))) =  -\tau(p_{i})\sigma_{i}^{-1}(f),\]
  for all $i,j,k \in \oln$.
 Thus $\overline{M}$ is isomorphic to the module in $\ca{M}$ determined by  \[(-\tau(q_{1}), \ldots, -\tau(q_{n}),-\tau(p_{1}), \ldots, -\tau(p_{n}))\] via the action in Proposition~\ref{prop1}.
\end{proof}

Before considering the problem off classification for arbitrary $n$, we first solve it for $n=1$. 
The solution turns out to be a prototype for the general solution.

\section{The $\mathfrak{sl}_{2}$ case}
\label{sl2}
In this section We consider the case $n=1$. We classify all the objects of $\ca{M}$, determine their submodule structure, find their central character, and derive an explicit formula
for taking tensor product with any simple finite-dimensional module.
\subsection{Classification of objects in $\ca{M}$}
We consider the case $n=1$. As before, we identify modules in $\ca{M}$ (as sets) with $\bb{C}[h]$ where $h:=h_{1}=\frac{1}{2}(e_{1,1}-e_{2,2})$.
We also write $\sigma$ for $\sigma_{1}$.

\begin{defi}
\label{defsl2}
For each $b\in \bb{C}$ define $M_{b}$ to be the vector space $\bb{C}[h]$ equipped with the following $\mathfrak{sl}_{2}$-action:
\begin{displaymath}
\begin{array}{rcl}
 h \cdot f &=& h f  ,\\
 e_{1,2} \cdot f &=&  (h+b)\sigma f ,\\
 e_{2,1} \cdot f &=& -(h-b)\sigma^{-1} f.
\end{array}
\end{displaymath}

Similarly, define $M'_{b}$ to be the vector space $\bb{C}[h]$ equipped with the following $\mathfrak{sl}_{2}$-action:
\begin{displaymath}
\begin{array}{rcl}
 h \cdot f &=& h f  ,\\
 e_{1,2} \cdot f &=&  \sigma f ,\\
 e_{2,1} \cdot f &=& -(h+b+1)(h-b)\sigma^{-1} f.
\end{array}
\end{displaymath}
\end{defi}
\begin{thm}
\label{sl2classi}
\begin{enumerate}[$($i$)$]
\item\label{clasl21} For all $b \in \bb{C}$ the above defines on
$M_{b}$ and $M_{b}'$ the structure of $\mathfrak{sl}_{2}$-modules.
\item\label{clasl22} The set of modules 
\begin{multline*}
\{\mathrm{F}_{(a,1)}(M_{b}) | a \in \bb{C}^{*}, b \in \bb{C}\}\\
 \cup \{\mathrm{F}_{(a,1)}(M_{b}') | a \in \bb{C}^{*}, b \in \bb{C}_{ \geq -\frac{1}{2}}\}\\
 \cup \{\mathrm{F}_{(a,1)} \circ \mathrm{F}_{\tau}(M_{b}') | a \in \bb{C}^{*}, b \in \bb{C}_{ \geq -\frac{1}{2}}\}
\end{multline*}

where $\bb{C}_{ \geq -\frac{1}{2}}=\{z \in \bb{C} | Re(z) \geq -\frac{1}{2}\}$ is a skeleton of $\ca{M}$.
\end{enumerate}
\end{thm}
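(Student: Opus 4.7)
For part (i), Proposition~\ref{prop1} constructs the $\mathfrak{g}$-action from the pair $(p,q) := (p_1,q_1) \in \mathbb{C}[h]^2$ in a way that automatically enforces $[h,e_{1,2}] = e_{1,2}$ and $[h,e_{2,1}] = -e_{2,1}$. Only the remaining relation $[e_{1,2},e_{2,1}] = 2h$ needs checking, and the formulas of Proposition~\ref{prop1} reduce this to the single polynomial identity
\[ p(h)\,q(h-1) - q(h)\,p(h+1) = 2h. \]
I would verify this identity by direct substitution: $p = h+b$, $q = -(h-b)$ for $M_b$, and $p = 1$, $q = -(h+b+1)(h-b)$ for $M_b'$. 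Both collapse after a short expansion.

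For part (ii), I would again invoke Proposition~\ref{prop1} to reduce the classification of $\mathcal{M}$ to finding all $(p,q)$ solving the displayed equation, modulo the $F_{(a,1)}$-action which rescales $(p,q) \mapsto (ap, a^{-1}q)$ (any pure $\uh$-linear isomorphism is just scalar multiplication and changes nothing). The key step is a degree analysis: writing $p = \alpha h^m + \cdots$ and $q = \beta h^n + \cdots$, the $h^{m+n}$-terms of $p(h)q(h-1)$ and $q(h)p(h+1)$ coincide and cancel, while the coefficient at $h^{m+n-1}$ in the difference equals $-(m+n)\alpha\beta$. Matching the right-hand side $2h$ rules out $m+n > 2$ when $m,n \geq 1$, while the boundary cases $m = 0$ and $n = 0$ are controlled by Lemma~\ref{deg} and force $\deg q = 2$ and $\deg p = 2$ respectively. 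Hence $(m,n) \in \{(1,1), (0,2), (2,0)\}$.

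In case $(1,1)$, equating coefficients in $p = \alpha h + \alpha_0$, $q = \beta h + \beta_0$ yields $\alpha\beta = -1$ and $\alpha_0 \beta + \alpha\beta_0 = 0$; setting $b := \alpha_0/\alpha$ and applying $F_{(1/\alpha,1)}$ normalises the pair to $(h+b, -(h-b))$, which is $M_b$. In case $(0,2)$ the equation collapses to $p(q(h-1) - q(h)) = 2h$ with $p$ a nonzero constant; solving for the coefficients of $q$ (in the spirit of Lemma~\ref{lemmasigmaeq}) and normalising by $F_{(1/p,1)}$ gives $(1, -h^2 - h + c)$ for some $c \in \mathbb{C}$, and since $c = b(b+1)$ is surjective in $b$, this matches $M_b'$. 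Case $(2,0)$ is the $F_\tau$-image of case $(0,2)$ by Proposition~\ref{symmprop}, yielding the third family $F_{(a,1)} \circ F_\tau(M_b')$.

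It remains to check that the listed objects form a skeleton. Since the pair $(\deg p, \deg q)$ is preserved by $F_{(a,1)}$ and by any $\uh$-linear isomorphism, no module in one family is isomorphic to one in another. Within each family I would unwind isomorphisms as scalar multiplications to read off redundancies: for $M_b$, the parameter $b$ is forced; for $M_b'$, one observes the literal identity $M_b' = M_{-b-1}'$ (since $q$ depends on $b$ only through $b(b+1)$), so the redundancy is exactly the involution $b \mapsto -b-1$, and the condition $\mathrm{Re}(b) \geq -\tfrac{1}{2}$ picks one representative from each orbit. The main obstacle is bookkeeping: confirming that the degree analysis is exhaustive, that the $F_{(a,1)}$-scaling absorbs precisely the remaining continuous ambiguity in each normal form, and that the $F_\tau$-twisted family is disjoint from the other two (which is clear once the $(\deg p,\deg q)$ invariant is in hand, but needs to be stated explicitly).
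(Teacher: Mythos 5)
Your proposal is correct and follows essentially the same route as the paper: reduce everything to the single relation $p\,\sigma(q)-q\,\sigma^{-1}(p)=2h$, split into the degree cases $(\deg p,\deg q)\in\{(1,1),(0,2),(2,0)\}$, normalize with $\mathrm{F}_{(a,1)}$ and $\mathrm{F}_{\tau}$, and rule out cross-isomorphisms by observing that any isomorphism in $\ca{M}$ is a scalar and hence forces the defining polynomials to coincide (with the $b\leftrightarrow -b-1$ redundancy for $M_b'$ handled by the half-plane condition). The only cosmetic difference is that the paper first solves for the product $g=q\,\sigma^{-1}(p)=-(h+b+1)(h-b)$ via Lemma~\ref{lemmasigmaeq} and then enumerates its factorizations in the UFD $\bb{C}[h]$, whereas you reach the same case split by comparing leading coefficients directly.
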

\begin{proof}
To prove claim \eqref{clasl21} We first check that $M_{b}$ is an $\mathfrak{sl}_{2}$-module. We check the following relations:
\[h \cdot e_{1,2} \cdot f - e_{1,2} \cdot h \cdot f = h((h+b)\sigma f) - (h+b)\sigma (hf) = (h+b)\sigma f = [h,e_{1,2}] \cdot f\]
\[h \cdot e_{2,1} \cdot f - e_{2,1} \cdot h \cdot f = h(-(h-b)\sigma^{-1} f) + (h-b)\sigma^{-1} (hf) = (h-b)\sigma^{-1} f = [h,e_{2,1}] \cdot f\]
\begin{align*}
 e_{1,2} \cdot e_{2,1} \cdot f - e_{2,1} \cdot e_{1,2} \cdot f &= -(h+b)\sigma (h-b)\sigma^{-1}f + (h-b)\sigma^{-1}(h+b) \sigma f\\
 &= \big(-(h+b)(h-b-1) + (h-b)(h+b+1) \big) \sigma \sigma^{-1} f\\
 &= (2h)f =  [e_{1,2} , e_{2,1}] \cdot f.
\end{align*}
The remaining relations are obvious. Thus $M_{b}$ is an $\mathfrak{sl}_{2}$-module. 
It is left to the reader to verify that also $M_{b}'$ is an $\mathfrak{sl}_{2}$-module.

To prove claim \eqref{clasl22}, let $M \in \ca{M}$ and define 
$$p:=p_{1}=e_{1,2} \cdot 1 \in \bb{C}[h]\quad \text{  and }\quad q:=q_{1}=e_{2,1} \cdot 1 \in \bb{C}[h].$$
Then, in accordance with Proposition~\ref{prop1}, the action of $\mathfrak{sl}_{2}$ on $M=\bb{C}[h]$ is given by
\begin{displaymath}
\begin{array}{rcl}
 h \cdot f &=& h f  ,\\
 e_{1,2} \cdot f &=& p \sigma f ,\\
 e_{2,1} \cdot f &=& q \sigma^{-1} f.
\end{array}
\end{displaymath}
Since $M$ is a module, we have, in particular,
 $(2h)f = [e_{1,2},e_{2,1}]\cdot f = e_{1,2}\cdot e_{2,1} \cdot f - e_{2,1}\cdot e_{1,2} \cdot f$
for all $f\in \bb{C}[h]$. This is equivalent to $p \sigma(q)-q\sigma^{-1}p = 2h$.
The latter transforms to $\sigma(g)-g = 2h$ by letting $g=q\sigma^{-1}(p)$.
The equation $\sigma(g)-g = 2h$ has the form discussed in Lemma~\ref{lemmasigmaeq} and the set of solutions is
 $\{-h(h+1)+c \; | c \in \bb{C}\;\}$. Now, every $g$ in this set has a factorization into linear factors of the form
$g=-(h+b+1)(h-b)$ for some $b \in \bb{C}$. Thus we try to find all polynomials $p,q$ for which
$q\sigma^{-1}(p)=-(h+b+1)(h-b)$.
Since $\bb{C}[h]$ is a UFD, we only have three cases, corresponding to $(\deg p, \deg q) \in \{(2,0),(0,2),(1,1)\}$.

If $(\deg p, \deg q) = (0,2)$, then
\[(p,q)=(a,-a^{-1}(h+b+1)(h-b)) \qquad\text{ for some }\qquad  a\in \bb{C}^{*}\]
and hence $M \simeq \mathrm{F}_{(a,1)}(M_{b}')$.
If $(\deg p, \deg q) = (2,0)$, then
\[(p,q)=(a(h+b)(h-b-1),-a^{-1}) \qquad\text{ for some }\qquad a\in \bb{C}^{*}\]
and hence $M \simeq \mathrm{F}_{(a^{-1},1)} \circ \mathrm{F}_{\tau}(M_{b}')$.
Finally, if $(\deg p, \deg q) = (1,1)$, we obtain either
\[(p,q)=(a(h+b),-a^{-1}(h-b)) \qquad\text{ for some }\qquad  a\in \bb{C}^{*},\]
or \[(p,q)=(a(h-b-1),-a^{-1}(h+b+1)) \qquad\text{ for some }\qquad a\in \bb{C}^{*}.\]
In this case  either $M \simeq \mathrm{F}_{(a,1)}(M_{b})$ or  $M \simeq \mathrm{F}_{(a,1)}(M_{-b-1})$.

The above argument shows that any module in $\ca{M}$ is isomorphic to either $\mathrm{F}_{a}(M_{b})$, $\mathrm{F}_{a}(M'_{b})$, or $\mathrm{F}_{a} \circ \mathrm{F}_{\tau}(M'_{b})$
for some $a \in \mathbb{C}^{*}$ and $b \in \bb{C}$.
We now show that these cases are essentially mutually exclusive.  First note that any morphism 
$\Phi$ in $\ca{M}$ is determined by its value at $1$ since $\Phi(f)=f \Phi(1)$.
This also shows that any invertible morphism $\Phi$ must have $\Phi(1)$ invertible, and thus $\Phi$ is 
multiplication by a nonzero constant: $\Phi(f)=cf$.
Now, let $\Phi: M \longrightarrow M'$ be an isomorphism in $\ca{M}$. Let $p:=e_{1,2} \cdot 1 \in M$ and
$p':=e_{1,2} \cdot 1 \in M'$. Then 
\[cp'=p'\sigma \Phi(1) = e_{1,2} \Phi(1) = \Phi(e_{1,2} \cdot 1) = \Phi(p) = p \Phi(1)=cp\]
which gives $p=p'$. Thus two modules can only be isomorphic if $e_{1,2} \cdot 1$ is the same in both modules.
However, given two modules from the set \eqref{clasl22}, we see that $(e_{1,2} \cdot 1)$ is determined by its degree, its leading coefficient and its set of zeros. All these three properties
coincide nontrivially only in the pairs \[(\mathrm{F}_{(a,1)}(M'_{b}),\mathrm{F}_{(a,1)}(M'_{-b-1}))
       \text{ and } (\mathrm{F}_{(a,1)} \circ \mathrm{F}_{\tau}(M'_{b}), \mathrm{F}_{(a,1)} \circ \mathrm{F}_{\tau}(M'_{-b-1})).\]
But unless they coincide, precisely one of $b$ and $-b-1$ lies in the set $\bb{C}_{\geq -\frac{1}{2}}$. 
This shows that no nontrivial isomorphisms exists between the objects listed in \eqref{clasl22}.
\end{proof}

\subsection{Submodules and quotients}
Now we turn to some properties of the objects in $\ca{M}$.
\begin{lemma}
 \label{lemmazeroes}
 The modules $\mathrm{F}_{(a,1)}(M'_{b})$ and $\mathrm{F}_{\tau} \circ \mathrm{F}_{(a,1)}(M'_{b})$ from the classification in the previous section are simple for all $b \in \mathbb{C}$ and $a \in \bb{C}^{*}$.
\end{lemma}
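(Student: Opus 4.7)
The plan is to first reduce the claim to showing that $M'_b$ itself is simple. By Lemma~\ref{functorlemma}(\ref{fun8}) both $\mathrm{F}_{(a,1)}$ and $\mathrm{F}_{\tau}$ are auto-equivalences of $\g\text{-Mod}$, hence they preserve the lattice of submodules of any object, and simplicity is transported by each of them. Consequently $\mathrm{F}_{(a,1)}(M'_b)$ and $\mathrm{F}_{\tau} \circ \mathrm{F}_{(a,1)}(M'_b)$ are simple if and only if $M'_b$ is simple, so it suffices to treat the latter.

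To show $M'_b$ is simple, I would observe that any $\mathfrak{sl}_2$-submodule $N \subseteq M'_b$ is in particular stable under the action of $h$, which is ordinary multiplication by $h$. Thus $N$ is a $\bb{C}[h]$-submodule of the free rank-one module $\bb{C}[h]$, and hence an ideal: $N = g(h)\bb{C}[h]$ for some $g \in \bb{C}[h]$. Assume $N \neq 0$ and let $g$ be a nonzero generator. The action of $e_{1,2}$ in Definition~\ref{defsl2} gives $e_{1,2}\cdot g = \sigma(g) = g(h-1)$, which by assumption must lie in $g(h)\bb{C}[h]$. Since $\deg g(h-1) = \deg g(h)$, writing $g(h-1) = r(h) g(h)$ forces $r$ to be a constant $c \in \bb{C}^{*}$. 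Comparing leading coefficients yields $c = 1$, so $g(h-1) = g(h)$; but only constants satisfy this periodicity identity in $\bb{C}[h]$. Therefore $g \in \bb{C}^{*}$ and $N = \bb{C}[h] = M'_b$, proving simplicity.

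The proof is short because in $M'_b$ the element $e_{1,2}$ acts by a pure shift operator $\sigma$ without any polynomial multiplier, which reduces the question to an elementary statement about $\bb{C}[h]$-ideals closed under $\sigma$. I do not anticipate any serious obstacle; the only point requiring care is the reduction through the auto-equivalences $\mathrm{F}_{(a,1)}$ and $\mathrm{F}_{\tau}$, which is immediate once one recalls that categorical equivalences preserve the submodule lattice. It is worth noting that this simplicity is precisely what distinguishes $M'_b$ from $M_b$, where $e_{1,2}$ acts by $(h+b)\sigma$ and the same ideal analysis will instead yield reducibility at integer values of $b$.
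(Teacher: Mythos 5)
Your proof is correct, and the reduction via the auto-equivalences $\mathrm{F}_{(a,1)}$ and $\mathrm{F}_{\tau}$ matches the paper's. Where you diverge is in the core simplicity argument. The paper takes an arbitrary nonzero $f$ in a submodule $S$, tracks its zero set under powers of $e_{1,2}$ (which shifts $N(f)$ to $N(f)+k$), chooses $k$ large enough that $f$ and $e_{1,2}^k\cdot f$ are relatively prime, and then invokes B\'ezout in $\bb{C}[h]$ to conclude $1\in S$. You instead observe at the outset that any $\mathfrak{sl}_2$-submodule is in particular a $\bb{C}[h]$-submodule of ${}_{\uh}\uh$, hence a principal ideal $g(h)\bb{C}[h]$, and then exploit that $e_{1,2}$ acts by the pure shift $\sigma$: the generator must divide $\sigma(g)$, degree and leading-coefficient comparison force $\sigma(g)=g$, and Lemma~\ref{deg} (or the infinitely-many-roots argument) forces $g$ constant. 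Both arguments ultimately rest on $\bb{C}[h]$ being a PID and on the shift behaviour of $e_{1,2}$, but yours is more economical: it avoids the coprimality/B\'ezout step entirely and isolates exactly why $M_b'$ differs from $M_b$, where the extra factor $(h+b)$ in $e_{1,2}\cdot f=(h+b)\sigma f$ breaks the divisibility argument and lets reducibility appear for $2b\in\bb{N}_0$. The paper's zero-set method, on the other hand, is the one that generalizes directly to the proof of Lemma~\ref{simpleproof}, where the multipliers $(h\pm b)$ contribute extra zeros that must be tracked.
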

\begin{proof}
Since $\mathrm{F}_{\tau}$ and $\mathrm{F}_{(a,1)}$ are auto-equivalences it suffices to prove that $M_{b}'$ is simple for all $b \in \bb{C}$.
Let $S$ be a nonzero submodule of $M'_{b}$ and let $f$ be a nonzero polynomial in $S$. We let $N(f)$ be the set of zeros of $f$; this is a finite subset of $\bb{C}$.
From the definition of the module structure we see that \[N(e_{1,2} \cdot f) = N(f)+1,\]
and inductively we obtain \[N(e_{1,2}^{k} \cdot f) = N(f)+k.\]
Now take $k$ large enough so that $(N(f)+k) \cap N(f)= \varnothing$. Then $f$ and $e_{1,2}^{k} \cdot f$ are relatively prime elements of $S$, and we can find $g_{1},g_{2} \in \bb{C}[h]$ such that
\[g_{1}f + g_{2}e_{1,2}^{k} \cdot f = 1 \in S.\] Then we have $S=\bb{C}[h] = M'_{b}$ which shows that $M'_{b}$ is simple for each $b \in \mathbb{C}$.
\end{proof}

The last type of modules is more interesting.

\begin{lemma}
\label{simpleproof}
\begin{enumerate}[$($i$)$]
 \item  For $2b \not\in \bb{N}_{0}$ the module $\mathrm{F}_{(a,1)}(M_{b})$ is simple.
 \item  For $2b\in\bb{N}_{0}$ the module $\mathrm{F}_{(a,1)}(M_{b})$ has a unique proper 
 submodule which is isomorphic to $\mathrm{F}_{(a,1)}(M_{-b-1})$,
 and the corresponding simple quotient is isomorphic to the simple finite dimensional module
 $L(2b)$ with highest weight $2b$ and dimension $2b+1$. In other words, we have a nonsplit short exact sequence:
\[0 \longrightarrow  \mathrm{F}_{(a,1)}(M_{-b-1}) \longrightarrow \mathrm{F}_{(a,1)}(M_{b}) \longrightarrow L(2b) \longrightarrow 0.\]
\end{enumerate}
\end{lemma}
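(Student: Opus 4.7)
The plan is to work directly with $M_b$: since $\mathrm{F}_{(a,1)}$ is an autoequivalence by Lemma~\ref{functorlemma}\eqref{fun8}, simplicity, submodules, and isomorphism type all transfer. For part (i), I take any nonzero submodule $S\subseteq M_b$ and $0\neq f\in S$. A short induction using Definition~\ref{defsl2} gives
\[
e_{1,2}^m\cdot f = \prod_{j=0}^{m-1}(h+b-j)\, f(h-m), \qquad e_{2,1}^m\cdot f = (-1)^m\prod_{j=0}^{m-1}(h-b+j)\, f(h+m),
\]
with zero sets $\{-b+i : 0\leq i<m\}\cup(N(f)+m)$ and $\{b-j : 0\leq j<m\}\cup(N(f)-m)$ respectively. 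The two arithmetic progressions intersect iff $2b=i+j$ for some $0\leq i,j<m$, i.e.\ iff $2b\in\bb{N}_0$, so under the hypothesis of (i) they are always disjoint; and taking $m$ large enough makes the shifts $N(f)\pm m$ pairwise disjoint as well as disjoint from the progressions. Hence $e_{1,2}^m\cdot f$ and $e_{2,1}^m\cdot f$ are coprime in $\bb{C}[h]$, and a Bezout identity places $1\in S$, so $S=M_b$.

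For part (ii), set $g(h):=(h+b)(h+b-1)\cdots(h-b)$, of degree $2b+1$. A telescoping check verifies the two intertwining identities
\[
g(h)(h-b-1)=(h+b)\,g(h-1), \qquad g(h)(h+b+1)=(h-b)\,g(h+1),
\]
with both sides of the first equal to $\prod_{k=-b-1}^{b}(h+k)$ and both sides of the second equal to $\prod_{k=-b}^{b+1}(h+k)$. Combined with the obvious $h$-compatibility these say exactly that multiplication by $g$ defines a module map $\phi:M_{-b-1}\to M_b$. The image $g\bb{C}[h]$ is thus a proper nonzero submodule isomorphic to $M_{-b-1}$, and since $2(-b-1)\notin\bb{N}_0$ it is simple by part (i). To identify the quotient I exhibit a highest weight vector: $\tilde{e}_b:=(h+b)(h+b-1)\cdots(h-b+1)$ has degree $2b$, so it is nonzero modulo $g$; and the factorizations $(h-b)\tilde{e}_b=g(h)$ and $(h+b)\tilde{e}_b(h-1)=g(h)$ give $h\cdot\tilde{e}_b\equiv b\,\tilde{e}_b$ and $e_{1,2}\cdot\tilde{e}_b\equiv 0$ in the quotient. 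So the $(2b+1)$-dimensional quotient contains a highest weight vector of $h$-weight $b$ and must be the simple module $L(2b)$.

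Finally, uniqueness of the submodule and non-splitting both follow from a dimension argument: as a $\bb{C}[h]$-submodule, any nonzero submodule of $M_b$ is a principal ideal, hence infinite dimensional. If $T\subseteq M_b$ is a nonzero proper submodule with $T\cap g\bb{C}[h]=0$, it would embed into the finite dimensional quotient $L(2b)$, impossible; hence $T\cap g\bb{C}[h]=g\bb{C}[h]$ by simplicity of $g\bb{C}[h]$, and since $T/g\bb{C}[h]$ is a proper submodule of the simple $L(2b)$ it must be zero, so $T=g\bb{C}[h]$. A splitting of the sequence would likewise produce a finite dimensional submodule, which is impossible. The main technical hurdle is the coprimality argument in part (i), which is where the hypothesis $2b\notin\bb{N}_0$ is crucially used; everything else reduces to guessing $g$ and $\tilde{e}_b$ and checking the intertwining identities directly.
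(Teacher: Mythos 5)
Your proof is correct and follows essentially the same route as the paper's: the zero-set/coprimality argument for simplicity when $2b\notin\bb{N}_0$, the submodule $g\,\bb{C}[h]$ with $g=\prod_{j=0}^{2b}(h+b-j)$ identified with $M_{-b-1}$ via the intertwining identities, and the highest weight vector $\prod_{j=0}^{2b-1}(h+b-j)$ identifying the quotient with $L(2b)$. Your closing argument for uniqueness of the submodule and non-splitting (every nonzero submodule is a nonzero ideal of $\bb{C}[h]$, hence infinite dimensional) is a welcome addition, as the paper leaves these points implicit.
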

\begin{proof}
We first prove the two statements for $M_{b}$ using an argument similar to that of Lemma~\ref{lemmazeroes}. We see that
\[N(e_{1,2} \cdot f) = \{-b\} \cup (N(f)+1),\]
so inductively we obtain
  \[N(e_{1,2}^{k} \cdot f) = \{-b,-b+1,-b+2, \ldots, -b+k-1\} \cup (N(f)+k).\]
Similarly,
\[N(e_{2,1} \cdot f) = \{b\} \cup (N(f)-1),\] which implies
\[N(e_{2,1}^{k} \cdot f) = \{b,b-1,b-2, \ldots, b-k+1\} \cup (N(f)-k).\]
Now for large integers $k$,  \[(N(f)-k) \cap \big( \{-b,-b+1,-b+2, \ldots, -b+k-1\} \cup (N(f)+k) \big) = \varnothing,\]
and  \[(N(f)+k) \cap \big( \{b,b-1,b-2, \ldots, b-k+1\} \cup (N(f)-k) \big) = \varnothing.\]

Note that for $2b \not\in \bb{N}_{0}$, we also have
\[\{-b,-b+1,-b+2, \ldots, -b+k-1\} \cap \{b,b-1,b-2, \ldots, b-l+1\} = \varnothing\] for all natural numbers $k$ and $l$, so as in the argument in Lemma~\ref{lemmazeroes}, 
we see that $e_{1,2}^{k} \cdot f$ and $e_{2,1}^{k} \cdot f$ are relatively prime for large enough $k$ so any submodule containing a nonzero polynomial $f$ also contains $1$ and the submodule is all of $M_{b}$.

Finally, suppose $2b \in \mathbb{N}_{0}$. We claim that 
\[S:=  \mathbb{C}[h]\prod_{j=0}^{2b}(h+b-j)\] is a proper submodule of $M_{b}$.
Clearly $\mathfrak{h} S \subset S$. We now calculate explicitly
\begin{align*}
 e_{1,2}\cdot \prod_{j=0}^{2b}(h+b-j) f =& (h+b)\sigma\prod_{j=0}^{2b}(h+b-j) f\\
=&(h+b)\prod_{j=0}^{2b}(h+b-j-1) \sigma f\\
=&(h+b)\prod_{k=1}^{2b+1}(h+b-k) \sigma f\\
=&((h-b-1)\sigma f)(h+b)\prod_{k=1}^{2b}(h+b-k)\\
=&((h-b-1)\sigma f)\prod_{k=0}^{2b}(h+b-k),
\end{align*}
which shows  that $e_{1,2}S \subset S$. Analogous calculations show that $e_{2,1}S \subset S$ and thus $S$ is a proper submodule.
 Now write $Q:=\prod_{j=0}^{2b}(h+b-j)$. Then $S=\{pQ \;| \; p \in \mathbb{C}[h]\}$. An explicit calculation gives
\begin{displaymath}
 \begin{array}{rcl}
h\cdot Qf&:=&Qhf \\
e_{1,2}\cdot Qf&:=&Q((h-b-1) \sigma f)  \\
e_{2,1}\cdot Qf&:=&Q(-(h+b+1) \sigma^{-1} f).  \\
\end{array}
\end{displaymath}
Thus we immediately see that $f \mapsto fQ$ is an isomorphism $M_{-b-1} \rightarrow S$. Note that $S$ is simple since $-b-1 \not\in \mathbb{N}_{0}$.
 Next we look at the quotient $M_{b} / S$. Define
\[v:=\prod_{j=0}^{2b-1}(h+b-j) + S.\]
Then $e_{1,2} \cdot v = 0$ and $(e_{1,1}-e_{2,2}) \cdot v = 2h \cdot v = 2bv$ so $v$ is a highest weight  vector of weight $2b$. Hence, since $\dim M_{b} / S =2b+1$,  $M_{b} / S$ is isomorphic to $L(2b)$.
We thus have a nonsplit short exact sequence:
\[0 \longrightarrow  M_{-b-1} \longrightarrow M_{b} \longrightarrow L(2b) \longrightarrow 0.\]
This proves the statements for $M_{b}$. 
Since the functor $\mathrm{F}_{(a,1)}$ is an auto-equivalence it maps simples to simples, it follows that
 $\mathrm{F}_{(a,1)}(M_{b})$ is simple for $2b \not\in \bb{N}_{0}$. Application of the exact functor $\mathrm{F}_{(a,1)}$ to our short exact sequence, we get the corresponding sequence
\[0 \longrightarrow  \mathrm{F}_{(a,1)}(M_{-b-1}) \longrightarrow \mathrm{F}_{(a,1)}(M_{b}) \longrightarrow \mathrm{F}_{(a,1)}(L(2b)) \longrightarrow 0,\] and since $\mathrm{F}_{(a,1)}$ is the identity functor on
 finite dimensional modules by Lemma~\ref{functorlemma}\eqref{fun5}, we have an exact sequence
\[0 \longrightarrow  \mathrm{F}_{(a,1)}(M_{-b-1}) \longrightarrow \mathrm{F}_{(a,1)}(M_{b}) \longrightarrow L(2b) \longrightarrow 0,\]
as claimed.
\end{proof}

\begin{rmk}
The above lemma shows that every finite dimensional simple $\mathfrak{sl}_{2}$-module can be expressed as a quotient of two (infinite dimensional) modules from $\ca{M}$.
This is similar to what holds for Verma modules: if $2b\in \bb{N}_{0}$ there exists a short nonsplit exact sequence
\[0 \longrightarrow  M(-2b-2) \longrightarrow M(2b) \longrightarrow L(2b) \longrightarrow 0,\]
where $M(\lambda)$ is the Verma module of highest weight $\lambda$.
\end{rmk}

\subsection{Central character}
The simple modules $\mathrm{F}_{(a,1)}(M_{b}), \mathrm{F}_{(a,1)}(M_{b}')$ and $\mathrm{F}_{\tau} \circ \mathrm{F}_{(a,1)}(M_{b}')$ have central characters by Schur's lemma, that is 
there exist algebra homomorphisms
$\chi_{N}: Z(\mathfrak{sl}_{2}) \longrightarrow \bb{C}$ such that 
\[z \cdot v = \chi_{N}(z)v\quad \text{ for all }  z\in Z(\mathfrak{sl}_{2}) \text{ and } v \in N,\]
for each $N \in \{ \mathrm{F}_{(a,1)}(M_{b}), \mathrm{F}_{(a,1)}(M_{b}'),\mathrm{F}_{\tau} \circ \mathrm{F}_{(a,1)}(M_{b}')\}$.
\begin{prop}
Let $N \in \{ \mathrm{F}_{(a,1)}(M_{b}), \mathrm{F}_{(a,1)}(M_{b}'),\mathrm{F}_{\tau} \circ \mathrm{F}_{(a,1)}(M_{b}')\}$.
 The central character of $N$ is determined by
\[\chi_{M_{b}}(c_{2}) = \chi_{M_{b}'}(c_{2}) = \chi_{M_{b}''}(c_{2}) = 2b(b+1),\]
where $c_{2}=2h^{2} +e_{1,2}e_{2,1}+e_{2,1}e_{1,2}$. 
\end{prop}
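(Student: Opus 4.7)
The plan rests on two observations. First, $Z(\mathfrak{sl}_2)$ is generated as an algebra by the Casimir $c_2$, so a central character is completely determined by its value on $c_2$; by Schur's lemma each simple module in our list admits a scalar $\chi(c_2)$, and it suffices to compute this scalar. Second, the automorphisms $\varphi_{(a,1)}$ and $\tau$ fix $c_2$: a direct check gives $\varphi_{(a,1)}(e_{1,2}e_{2,1}) = e_{1,2}e_{2,1}$, $\varphi_{(a,1)}(e_{2,1}e_{1,2}) = e_{2,1}e_{1,2}$, $\varphi_{(a,1)}(h^2)=h^2$, and similarly $\tau(e_{1,2}e_{2,1}) = (-e_{2,1})(-e_{1,2}) = e_{2,1}e_{1,2}$ together with $\tau(h^2)=h^2$ yields $\tau(c_2)=c_2$. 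Consequently, for any $\varphi \in \{\varphi_{(a,1)},\tau\}$ and $v \in M$ we have $c_2 \bullet v = \varphi(c_2)\cdot v = c_2 \cdot v$, so $\mathrm{F}_{(a,1)}$ and $\mathrm{F}_{\tau}$ preserve the value of $\chi$ on $c_2$. This reduces the problem to two cases: computing $c_2 \cdot 1$ inside $M_b$ and inside $M_b'$.

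For $M_b$ I would compute each summand directly via Definition~\ref{defsl2}. One finds $e_{1,2}\cdot 1 = h+b$ and then $e_{2,1}\cdot(h+b) = -(h-b)(h+b+1)$, giving $e_{2,1}e_{1,2}\cdot 1 = -h^2 - h + b(b+1)$. Symmetrically, $e_{2,1}\cdot 1 = -(h-b)$ and $e_{1,2}\cdot(-(h-b)) = -(h+b)(h-b-1)$, so $e_{1,2}e_{2,1}\cdot 1 = -h^2 + h + b(b+1)$. Adding these and the contribution $2h^2 \cdot 1 = 2h^2$ yields $c_2 \cdot 1 = 2b(b+1)\cdot 1$, as required.

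For $M_b'$ the computation is analogous: $e_{1,2}\cdot 1 = 1$ so $e_{2,1}e_{1,2}\cdot 1 = -(h+b+1)(h-b)$, while $e_{2,1}\cdot 1 = -(h+b+1)(h-b)$ and then applying $\sigma$ gives $e_{1,2}e_{2,1}\cdot 1 = -(h+b)(h-b-1)$. Expanding and summing with $2h^2 \cdot 1$ again produces $2b(b+1)\cdot 1$. Combined with the invariance argument from the first paragraph, this handles all three families and completes the proof.

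I do not expect any real obstacle; the only small point requiring care is to record $\tau(c_2)=c_2$ cleanly (the cross terms swap but their sum is preserved), and to note that the equality of central characters under $\mathrm{F}_\tau$ and $\mathrm{F}_{(a,1)}$ follows because these functors implement the action of automorphisms that fix $c_2$, so no separate computation inside $\mathrm{F}_\tau \circ \mathrm{F}_{(a,1)}(M_b')$ is needed.
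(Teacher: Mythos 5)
Your proposal is correct and follows essentially the same route as the paper: the paper likewise reduces the problem to evaluating $c_{2}\cdot 1$ (noting $Z(\mathcal{U}(\mathfrak{sl}_2))=\mathbb{C}[c_2]$) and then invokes "an explicit calculation", which you have simply carried out in full; your computations for $M_b$ and $M_b'$ check out. The only addition is your clean observation that $\varphi_{(a,1)}$ and $\tau$ fix $c_2$, so the twisting functors do not alter the character — a tidy way to dispose of the remaining cases that the paper leaves implicit.
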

\begin{proof}
Since the center of $\mathcal{U}(\mathfrak{sl}_{2})$ is $\bb{C}[c_{2}]$, the central character of a module is determined by the single scalar 
\begin{equation}
\label{eqchar}
 \chi(c_{2}) = c_{2} \cdot 1 = (2h^{2} +e_{1,2}e_{2,1}+e_{2,1}e_{1,2}) \cdot 1.
\end{equation}
Since we know that the right hand side of \eqref{eqchar} is a scalar, we need only consider the constant term on the left hand side of \eqref{eqchar}.
An explicit calculation gives the central characters as stated in the proposition.
\end{proof}

\subsection{Tensoring with finite dimensional modules}
\label{tensoring}
Let $L$ be the natural $\mathfrak{sl}_{2}$-module. It has basis $\{e_{1},e_{2}\}$ and the action is given by
\[e_{i,j} \cdot e_{k} = \delta_{j,k}e_{i}.\]
In the basis  $\{e_{1},e_{2}\}$ we explicitly have:
\begin{displaymath}
 \begin{array}{rclrcl}
   h \cdot e_{1} &=& \frac{1}{2}e_{1}  \qquad&    h \cdot e_{2} &=& -\frac{1}{2}e_{2}  \\
   e_{1,2} \cdot e_{1} &=& 0  &      e_{1,2} \cdot e_{2} &=& e_{1}  \\  
   e_{2,1} \cdot e_{1} &=& e_{2}  &     e_{2,1} \cdot e_{2} &=& 0.             
 \end{array}
\end{displaymath}
Being simple and $2$-dimensional, $L$ is isomorphic to $L(1)$, the simple highest weight module of highest weight $1$, and from here on we shall identify the two. 
Let $N \in \{M,M'\}$ and consider the module $N_{b} \otimes L(1)$. A basis for $N_{b} \otimes L(1)$ is $\{h^{k} \otimes e_{1}| k \geq 0\} \cup \{h^{k} \otimes e_{2}| k \geq 0\}$,
so, in particular, every element of $N_{b} \otimes L(1)$ has a unique expression of the form
\[(f,g) := f \otimes e_{1} + g\otimes e_{2},\] for some $f,g\in \bb{C}[h]$. In this notation, the action of $\mathfrak{sl}_{2}$ on $N_{b} \otimes L(1)$ is given by the following lemma which is easily proved by a 
straightforward computation.

\begin{lemma}
 The action of  $\mathfrak{sl}_{2}$ on $M_{b} \otimes L(1)$ is given by:
\begin{displaymath}
 \begin{array}{rcl}
   h \cdot (f,g) &=& \big( (h+\frac{1}{2})f,(h-\frac{1}{2})g \big),  \\
   e_{1,2} \cdot (f,g) &=& \big( (h+b)\sigma(f) + g, (h+b)\sigma(g) \big),  \\         
   e_{2,1} \cdot (f,g) &=& \big( -(h-b)\sigma^{-1}(f), -(h-b)\sigma^{-1}(g) + f \big) .
 \end{array}
\end{displaymath}
 The action of  $\mathfrak{sl}_{2}$ on $M_{b}' \otimes L(1)$ is given by:
\begin{displaymath}
 \begin{array}{rcl}
   h \cdot (f,g) &=& \big((h+\frac{1}{2})f,(h-\frac{1}{2})g\big),  \\
   e_{1,2} \cdot (f,g) &=& \big( \sigma(f) + g, \sigma(g)\big) , \\         
   e_{2,1} \cdot (f,g) &=& \big(-(h+b+1)(h-b)\sigma^{-1}(f), -(h+b+1)(h-b)\sigma^{-1}(g) + f\big) .
 \end{array}
\end{displaymath}
\end{lemma}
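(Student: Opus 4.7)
The plan is to invoke the standard coproduct formula for the $\mathfrak{sl}_2$-action on a tensor product,
\[ x \cdot (v \otimes w) = (x \cdot v) \otimes w + v \otimes (x \cdot w), \]
and to expand every element as $(f,g) = f \otimes e_1 + g \otimes e_2$. By linearity, it then suffices to apply the coproduct formula to each of the two summands $f \otimes e_1$ and $g \otimes e_2$, evaluating the first tensor factor using Definition~\ref{defsl2} and the second using the table of actions on $\{e_1, e_2\}$ stated immediately above the lemma.

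First, since $h$ acts diagonally on $L(1)$ with $h \cdot e_1 = \tfrac{1}{2} e_1$ and $h \cdot e_2 = -\tfrac{1}{2} e_2$, the two summands stay separated and we immediately read off $h \cdot (f,g) = \bigl( (h+\tfrac{1}{2})f,\, (h-\tfrac{1}{2})g \bigr)$ in both the $M_b$ and $M_b'$ cases. For $e_{1,2}$, the relations $e_{1,2} \cdot e_1 = 0$ and $e_{1,2} \cdot e_2 = e_1$ mean that the only cross term is $g \otimes (e_{1,2} \cdot e_2) = g \otimes e_1$, contributing $+g$ to the first component. Substituting $e_{1,2} \cdot f = (h+b)\sigma(f)$ (respectively $\sigma(f)$ for $M_b'$) from Definition~\ref{defsl2} into the remaining two terms gives the stated formula. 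Symmetrically, for $e_{2,1}$ the relations $e_{2,1} \cdot e_1 = e_2$, $e_{2,1} \cdot e_2 = 0$ produce the single cross term $f \otimes e_2$ in the second component, and substitution of the formula for $e_{2,1} \cdot f$ from Definition~\ref{defsl2} completes the computation in each case.

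The whole argument is a bookkeeping exercise of matching four expansions against the claimed formulas; there is no substantive obstacle. The only point requiring care is the correct placement of the cross terms coming from the action of $e_{1,2}$ and $e_{2,1}$ on $L(1)$, since these are precisely the summands that mix the two components $f$ and $g$ of $(f,g)$.
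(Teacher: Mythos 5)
Your proposal is correct and is exactly the ``straightforward computation'' the paper alludes to: apply the Leibniz rule $x\cdot(v\otimes w)=(x\cdot v)\otimes w+v\otimes(x\cdot w)$ to $f\otimes e_1+g\otimes e_2$, using Definition~\ref{defsl2} on the first factor and the table of actions on $\{e_1,e_2\}$ on the second. The cross terms are placed correctly in both cases, so nothing is missing.
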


We can now derive an explicit decomposition formula.
\begin{prop}
\label{propdecomp}
 Let $N \in \{M,M'\}$.
\begin{enumerate}[$($i$)$]
 \item\label{propd1} For $2b \neq -1$, we have
   \[N_{b}\otimes L(1) \simeq N_{b-\frac{1}{2}} \oplus N_{b+\frac{1}{2}}.\]
 \item\label{propd2} For $2b=-1$, we have a nonsplit short exact sequence 
   \[0 \longrightarrow N_{0} \longrightarrow N_{-\frac{1}{2}} \otimes L(1) \longrightarrow N_{-1} \longrightarrow 0.\]
\end{enumerate}
\end{prop}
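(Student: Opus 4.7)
The plan is to construct, for each $b$, two explicit generators $v_+, v_- \in N_b \otimes L(1)$ whose $\uh$-orbits form submodules isomorphic to $N_{b+1/2}$ and $N_{b-1/2}$, and then to analyze how these submodules fit together inside $N_b \otimes L(1)$. By Proposition~\ref{prop1}, any $\mathfrak{sl}_2$-module homomorphism $\Phi\colon N_c \to N_b \otimes L(1)$ is determined by $\Phi(1) = (f_0, g_0)$, and is well-defined precisely when $(f_0, g_0)$ satisfies the compatibility identities $\Phi(e_{i,j}\cdot 1) = e_{i,j}\cdot \Phi(1)$ for $(i,j) \in \{(1,2),(2,1)\}$, which can be read off from the action formulas in the preceding lemma. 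These reduce to functional equations in the shift $\sigma$, and solving them (in the spirit of Lemma~\ref{lemmasigmaeq}) forces $c \in \{b-\tfrac{1}{2}, b+\tfrac{1}{2}\}$ and yields unique-up-to-scalar solutions: for $N = M$, $v_+ = (1, 1)$ and $v_- = (h-b,\, h+b)$; for $N = M'$, $v_+ = (h+b+1,\, 1)$ and $v_- = (h-b,\, 1)$. A direct substitution using Definition~\ref{defsl2} then confirms $\uh v_\pm \simeq N_{b \pm 1/2}$.

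For part~(i), I combine the two embeddings into $\Psi\colon N_{b-1/2} \oplus N_{b+1/2} \to N_b \otimes L(1)$ by $(p, q) \mapsto p\cdot v_- + q\cdot v_+$, and show $\Psi$ is bijective when $2b \ne -1$. Given a target $(a_1, a_2) \in N_b \otimes L(1)$, solving $\Psi(p,q) = (a_1, a_2)$ becomes, after the substitutions $h \mapsto h - \tfrac{1}{2}$ in the first coordinate equation and $h \mapsto h + \tfrac{1}{2}$ in the second (to cancel the built-in $\uh$-shifts), a $2 \times 2$ linear system over $\bb{C}[h]$ whose coefficient matrix has determinant a nonzero scalar multiple of $2b+1$. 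Hence for $2b \ne -1$ the system has a unique solution for every target, making $\Psi$ a $\uh$-module isomorphism and therefore an $\mathfrak{sl}_2$-module isomorphism.

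For part~(ii), when $2b = -1$ the two generators degenerate: $v_+ = v_-$ in the $M'$ case, while $v_- = h\cdot v_+$ in the $M$ case. In either case $\uh v_+$ is a proper rank-$1$ submodule isomorphic to $N_0 = N_{b+1/2}$, supplying the left-hand injection. For the quotient, choose coset representatives $[r, 0]$ via the explicit projection $(a_1, a_2) \mapsto (a_1 - a_2(h+1),\, 0)$ (with an analogous formula in the $M'$ case), and verify by a short calculation that the induced actions of $e_{1,2}$ and $e_{2,1}$ on $[s(h+\tfrac{1}{2}),\, 0]$ match the defining action of $N_{-1}$, giving the desired surjection $N_{-1/2} \otimes L(1) \twoheadrightarrow N_{-1}$. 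Non-splitness follows from the uniqueness established in the first paragraph: any submodule of $N_{-1/2} \otimes L(1)$ isomorphic to $N_{-1}$ (resp.\ $N_0$) must be generated by a scalar multiple of $v_-$ (resp.\ $v_+$), but in this degenerate situation $\uh v_- \subseteq \uh v_+$, so no $N_{-1}$-submodule can be a complement to the $N_0$-submodule $\uh v_+$. The main obstacle will be the functional-equation bookkeeping in the first paragraph --- especially in the $M'$ case, where the equations are quadratic in $h$ --- and verifying non-triviality of the $2b+1$ determinant cleanly enough to cover both $N = M$ and $N = M'$ uniformly.
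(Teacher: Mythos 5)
Your proposal is correct and follows essentially the same route as the paper: your generators $v_\pm$ and the embeddings $p\mapsto p\cdot v_\pm$ are exactly the paper's submodules $S_1,S_2$ with the isomorphisms $\Phi_1,\Phi_2$, and the degenerate case $2b=-1$ is handled by the same observation that the two submodules collapse (with $\uh v_-\subseteq\uh v_+$). The only cosmetic difference is that you verify the direct sum for $2b\neq-1$ via the invertibility of a $2\times 2$ system with constant determinant $-(2b+1)$, where the paper checks $S_1\cap S_2=0$ and $S_1+S_2=N_b\otimes L(1)$ by hand; these are equivalent computations.
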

\begin{proof}
We first consider the case $N = M'$.
We determine the submodules of $M_{b}' \otimes L(1)$. Let $S$ be a nonzero submodule containing a nonzero element $(g_{1},g_{2})$.
Assume first that $g_{2} \neq 0$. Since
\[(e_{1,2}-1) \cdot (g_{1},g_{2}) = ( \sigma(g_{1}) + g_{2} - g_{1}, \sigma(g_{2})-g_{2}),\]
by Lemma~\ref{deg}, we see that $(e_{1,2}-1)$ acts by decreasing the degree by $1$ in the second component. Thus $c(e_{1,2}-1)^{\deg g_{2}} \cdot (g_{1},g_{2}) = (f,1) \in S$,
for some $f \in \bb{C}[h]$ and some $c \in \bb{C}^{*}$. 
Acting again by $(e_{1,2}-1)$ we obtain $(\sigma(f)-f+1,0) \in S$.
Assume $\sigma(f)-f+1 \neq 0$. Then, noting that 
 $(e_{1,2}-1) \cdot (g,0) = ( \sigma(g) - g,0)$,
we obtain
 \[c(e_{1,2}-1)^{\deg \sigma(f)-f+1} \cdot (\sigma(f)-f+1,0) = (1,0) \in S,\]
for some $c \in \bb{C}^{*}$. 
But then, since $1$ and $(h+\frac{1}{2})$ generate the algebra $\bb{C}[h]$, by acting on $(1,0)$ by $\bb{C}[h]$ we obtain
$(\bb{C}[h],0) \subset S$. Since $(g_{1},g_{2}) \in S$, we also obtain $(0,g_{2})\in S$ which as above gives us $(0,1) \in S$ and $(0,\bb{C}[h]) \subset S$.
 Thus $M_{b}' \otimes N = (\bb{C}[h],\bb{C}[h]) \subset S$ and $S=M_{b}' \otimes N$.
On the other hand, if $g_{2}=0$ to begin with, by the same argument we immediately get $(1,0) \in S$ and $(\bb{C}[h],0) \subset S$. Now $e_{2,1} \cdot (1,0) = (-(h+b+1)(h-b),  1) \in S$
 so we again obtain $(0,1) \in S$ and $(0,\bb{C}[h]) \subset S$, so again $S=M_{b}' \otimes N$.

The only case remaining is when $(f,1) \in S$ where $\sigma(f)-f+1=0$.
The equation $\sigma(f)-f=-1$
 has the form discussed in Lemma~\ref{lemmasigmaeq} and we know how to solve it. The solutions are precisely $f= h + c$,
 where $c$ is some constant. Explicit calculations show that the submodule generated by $(h+c,1)$ is proper if and only if $c \in \{-b,b+1\}$. Thus we define $S_{1}$ to be the submodule of $M_{b}'$
 generated by $(h-b,1)$, and we define $S_{2}$ to be the submodule of $M_{b}'$ generated by $(h+b+1,1)$.

We define two linear maps $\Phi_{1}: \ca{P} \rightarrow S_{1}$ and $\Phi_{2}: \ca{P} \rightarrow S_{2}$ by
\[\Phi_{1}(f(h)):= \big( (h-b)f(h+\frac{1}{2}),f(h-\frac{1}{2}) \big) \text{ and } \Phi_{2}(f(h)):= \big( (h+b+1)f(h+\frac{1}{2}),f(h-\frac{1}{2}) \big).\]
Explicit calculations show that
\begin{displaymath}
 \begin{array}{rcl}
   h \cdot 	 \Phi_{1}(f) &=& \Phi_{1}(hf)  \\
   e_{1,2}\cdot  \Phi_{1}(f) &=& \Phi_{1}(\sigma f) \\         
   e_{1,2} \cdot \Phi_{1}(f) &=& \Phi_{1}(-(h+(b-\frac{1}{2})+1)(h-(b-\frac{1}{2}))\sigma^{-1} f),
 \end{array}
\end{displaymath}
and also
\begin{displaymath}
 \begin{array}{rcl}
   h \cdot 	 \Phi_{2}(f) &=& \Phi_{2}(hf)  \\
   e_{1,2}\cdot  \Phi_{2}(f) &=& \Phi_{2}(\sigma f) \\         
   e_{1,2} \cdot \Phi_{2}(f) &=& \Phi_{2}(-(h+(b+\frac{1}{2})+1)(h-(b+\frac{1}{2}))\sigma^{-1} f).
 \end{array}
\end{displaymath}
This shows that $\Phi_{1}$ is an isomorphism $M_{b-\frac{1}{2}}' \rightarrow S_{1}$ and that $\Phi_{2}$ is an isomorphism $M_{b+\frac{1}{2}}' \rightarrow S_{2}$.
Finally, we show that for $b \neq -\frac{1}{2}$, the submodules $S_{1}$ and $S_{2}$ are complementary. If $(f,g) \in S_{1} \cap S_{2}$, then $(h-b)\sigma^{-1}(g)=f=(h+b+1)\sigma^{-1}(g)$,
so if $2b+1 \neq 0$ we nessecarily have $f=g=0$ and $S_{1} \cap S_{2} = 0$.

Moreover, we have $S_{1} + S_{2} \ni (h+b+1,1)-(h-b,1) = (2b+1,0)$, so if $2b+1 \neq 0$, $S_{1} + S_{2} \supset (\ca{P},0)$ which then gives $(0,1) \in S_{1} + S_{2}$,  $(0,\ca{P}) \subset S_{1} + S_{2}$, 
 and $S_{1} + S_{2} = (\ca{P},\ca{P}) = M_{b}' \otimes N$.

Thus, if $2b+1 \neq 0$ we have \[M_{b}' \otimes N  = S_{1} \oplus S_{2} \simeq M_{b-\frac{1}{2}}' \oplus M_{b+\frac{1}{2}}',\]
as claimed in \eqref{propd1}.

On the other hand, if $2b+1=0$, then $S_{1} = S_{2}$ is the unique submodule of $M_{b}' \otimes N$, and explicit calculations show that the quotient module also is isomorphic to $S_{1}$ and $S_{2}$.
Thus in this case we have a nonsplit exact sequence in $\ca{M}$:
\[0 \longrightarrow M_{0}' \longrightarrow M_{-\frac{1}{2}}' \otimes N \longrightarrow M_{-1}' \longrightarrow 0.\]
Thus \eqref{propd1} and \eqref{propd2} are proved for $N_{b}=M_{b}'$. 

For $N_{b}= M_{b}$ take instead $S_{1}:= \langle (1,1) \rangle$ and $S_{2}:= \langle (h-b,h+b) \rangle$ in $M_{b} \otimes L(1)$. Then
 \[f(h) \mapsto (f(h+\frac{1}{2}),f(h-\frac{1}{2}))\] 
is an isomorphism $M_{b+\frac{1}{2}} \rightarrow S_{1}$, and  
\[f(h) \mapsto ((h-b)f(h+\frac{1}{2}),(h+b)f(h-\frac{1}{2}))\]
is an isomorphism $M_{b-\frac{1}{2}} \rightarrow S_{2}$, and for $b \neq -\frac{1}{2}$ we have $M_{b} = S_{1} \oplus S_{2}$.
On the other hand, for $b=-\frac{1}{2}$ the sum is not direct, but we still have $(M_{-\frac{1}{2}} \otimes L(1)) \slash S_{1} \simeq M_{-1}$, so by Lemma~\ref{simpleproof},
 the multiset of Jordan-H\"{o}lder components of $M_{-\frac{1}{2}} \otimes L(1)$ is precisely
 $\{L(0),M_{-1},M_{-1}\}$.
\end{proof}

We can now describe explicitly the decomposition of modules obtained by taking the tensor product of a module from $\ca{M}$ with a finite dimensional module.
For this it suffices to consider only the simple finite dimensional module $L(k)$.

\begin{rmk}
\label{rmkGG}
The Clebsch-Gordan formula for simple finite dimensional $\mathfrak{sl}_{2}$-modules is well known, see for example Theorem~$1.39$ in \cite{Ma}. It states states that for $m,n \in \bb{N}_{0}$ with $m \geq n$ we have
\[L(m) \otimes L(n) = L(m+n) \oplus L(m+n-2) \oplus  \cdots \oplus L(m-n).\]
\end{rmk}

\begin{cor}
For all $2b \in \bb{C} \setminus \bb{N}_{0}$ and for $N_{b} \in \{M_{b},M_{b}'\}$ we have
\[N_{b} \otimes L(k) \simeq \bigoplus_{i=0}^{k} N_{b+\frac{k-2i}{2}}.\]
\end{cor}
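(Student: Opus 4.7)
My plan is to proceed by induction on $k\geq 0$, bootstrapping from Proposition~\ref{propdecomp}(i) (the $L(1)$ case) via the classical Clebsch--Gordan identity $L(k)\otimes L(1)\simeq L(k+1)\oplus L(k-1)$ from Remark~\ref{rmkGG}. The base case $k=0$ is immediate since $L(0)$ is one-dimensional, and the base case $k=1$ is exactly Proposition~\ref{propdecomp}(i), which applies because the hypothesis $2b\notin\bb{N}_0$ keeps us in the generic regime away from the exceptional value $2b=-1$.

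For the inductive step I would compute the threefold tensor product $N_b\otimes L(k)\otimes L(1)$ in two different ways. Associativity together with Clebsch--Gordan gives
\[
N_b\otimes L(k)\otimes L(1)\;\simeq\;\bigl(N_b\otimes L(k+1)\bigr)\oplus\bigl(N_b\otimes L(k-1)\bigr).
\]
On the other hand, invoking the induction hypothesis at level $k$ and then applying Proposition~\ref{propdecomp}(i) to each resulting summand yields
\[
N_b\otimes L(k)\otimes L(1)\;\simeq\;\bigoplus_{i=0}^{k}\bigl(N_{b+(k-2i-1)/2}\oplus N_{b+(k-2i+1)/2}\bigr).
\]
A straightforward reindexing shows that this last multiset of parameters is exactly the disjoint union of $\{b+(k+1-2j)/2 : j=0,\ldots,k+1\}$ (the conjectured summands of $N_b\otimes L(k+1)$) and $\{b+(k-1-2j)/2 : j=0,\ldots,k-1\}$ (which, by the induction hypothesis, is precisely $N_b\otimes L(k-1)$). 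Cancelling the $N_b\otimes L(k-1)$ part from both sides then yields the desired formula for $N_b\otimes L(k+1)$.

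The main obstacle is justifying this cancellation, which is a Krull--Schmidt-type argument in the infinite-dimensional setting. Each summand $N_{b'}$ appearing in the induction is simple (by Lemma~\ref{lemmazeroes} when $N=M'$, and by Lemma~\ref{simpleproof} when $N=M$, using that the parameter shifts preserve the genericity coming from $2b\notin\bb{N}_0$), hence has one-dimensional endomorphism ring by Schur's lemma; and distinct values of $b'$ produce pairwise nonisomorphic modules by the classification Theorem~\ref{sl2classi}. Together these two facts ensure that the multiset of isomorphism classes of simple summands is an isomorphism invariant of a finite direct sum of such modules, which legitimates the cancellation. A secondary technicality is checking that every intermediate parameter $b'$ avoids the exceptional value $-1/2$ required by Proposition~\ref{propdecomp}(i), which is precisely the role of the genericity hypothesis $2b\notin\bb{N}_0$.
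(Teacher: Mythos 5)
Your proof is correct and follows essentially the same route as the paper: induction on $k$, computing $N_b\otimes L(k)\otimes L(1)$ two ways via associativity and Clebsch--Gordan, then cancelling the $N_b\otimes L(k-1)$ part. Your explicit Krull--Schmidt-style justification of the cancellation (simplicity of the summands plus pairwise nonisomorphism from the classification) is a detail the paper leaves implicit, and is a welcome addition.
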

\begin{proof}
We proceed by induction on $k$, the case $k=0,1$ holding by Proposition~\ref{propdecomp}.
Assume the claim of the Corollary holds for $k$ and for $k-1$ and apply $-\otimes L(1)$ to both sides of $N_{b} \otimes L(k) \simeq \bigoplus_{i=0}^{k} N_{b+\frac{k-2i}{2}}$.
 Using associativity of the tensor product and applying the Clebsch-Gordan formula from Remark~\ref{rmkGG} on the left, we obtain
\[N_{b} \otimes (L(k+1) \oplus L(k-1)) \simeq \big( \bigoplus_{i=0}^{k} N_{b+\frac{k-2i}{2}} \big) \otimes L(1).\]
Using the distributive propery of the tensor product, this simplifies to
\[(N_{b} \otimes L(k+1)) \oplus (N_{b} \otimes L(k-1)) \simeq  \bigoplus_{i=0}^{k}( N_{b+\frac{k-2i}{2}} \otimes L(1)).\]
Now, since $b+\frac{k-2i}{2} \not\in \bb{Z}$ for all $i,k \in \bb{Z}$, we can apply our formula to both sides giving
\[(N_{b} \otimes L(k+1)) \oplus \bigoplus_{i=0}^{k-1} N_{b+\frac{k-1-2i}{2}} \simeq  \bigoplus_{i=0}^{k} N_{b+\frac{k-2i-1}{2}} \oplus \bigoplus_{i=0}^{k} N_{b+\frac{k-2i+1}{2}}.\]
Writing \[ \bigoplus_{i=0}^{k} N_{b+\frac{k-2i-1}{2}} = N_{b+\frac{-k-1}{2}} \oplus \bigoplus_{i=0}^{k-1} N_{b+\frac{k-2i-1}{2}},\]
and inserting this in the above formula, we obtain
\[(N_{b} \otimes L(k+1)) \oplus \bigoplus_{i=0}^{k-1} N_{b+\frac{k-1-2i}{2}} \simeq   N_{b+\frac{-k-1}{2}} \oplus \bigoplus_{i=0}^{k-1} N_{b+\frac{k-2i-1}{2}} \oplus \bigoplus_{i=0}^{k} N_{b+\frac{k-2i+1}{2}}.\]
Now we can cancel corresponding equal direct summands on each side resulting in
\[N_{b} \otimes L(k+1)  \simeq   N_{b+\frac{-k-1}{2}} \oplus \bigoplus_{i=0}^{k} N_{b+\frac{k-2i+1}{2}} \simeq \bigoplus_{i=0}^{k+1} N_{b+\frac{(k+1)-2i}{2}}.\]
Thus the formula holds for $k+1$, and the claim of the corollary follows by induction.
\end{proof}

\section{The $\mathfrak{sl}_{n+1}$ case}
\label{sln}
We now try to generalize the above results to the general case. We are trying to find all $(p_{1}, \ldots, p_{n},q_{1}, \ldots, q_{n}) \in \ca{P}^{2n}$ such that $\ca{P}$ becomes an $\g$-module under the action:
\begin{displaymath}
\begin{array}{rcll}
 h_{k} \cdot f &=& h_{k} f &  k \in \oln,\\
 e_{i,n+1} \cdot f &=& p_{i} \sigma_{i} f  &  i \in \oln,\\
 e_{n+1,j} \cdot f &=& q_{i} \sigma_{j}^{-1} f  &  j \in \oln,\\
 e_{i,j} \cdot f &=& \big( p_{i}\sigma_{i}(q_{j})-q_{j}\sigma_{j}^{-1}(p_{i})\big) \sigma_{i}\sigma_{j}^{-1}f  \qquad&  i,j \in \oln, i \neq j.\\
\end{array}
\end{displaymath}

From here on, assume that $(p_{1}, \ldots, p_{n},q_{1}, \ldots, q_{n})$ yields a module. We shall find a number of necessary relations amongst these polynomials.

\subsection{Technical Lemmas}
\begin{lemma}
\label{lemma3e}
For all $i,j\in \oln$ we have
\begin{enumerate}[$($i$)$]
 \item\label{ll1} $p_{i}\sigma_{i}(p_{j})= p_{j}\sigma_{j}(p_{i})$,
 \item\label{ll2} $q_{i}\sigma_{i}^{-1}(q_{j})= q_{j}\sigma_{j}^{-1}(q_{i})$,
 \item\label{ll3} $\sigma_{i}^{-1}(p_{i})q_{i} = -h_{i}(\overline{h}+1) + \tilde{g}_{i}$, where $\tilde{g}_{i} \in \ca{P}_{i}$ for each $i$.
\end{enumerate}
\end{lemma}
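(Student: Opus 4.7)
The plan is to exploit the fact that the action described in Proposition~\ref{prop1} must respect every commutation relation of $\mathfrak{sl}_{n+1}$. For each claim it will suffice to apply one carefully chosen commutator identity to the single element $1 \in M$ and read off the resulting polynomial identity in $\ca{P}$, using $p_i = e_{i,n+1} \cdot 1$ and $q_j = e_{n+1,j} \cdot 1$ together with the explicit action formulas from Proposition~\ref{prop1}.

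For (i), the bracket $[e_{i,n+1}, e_{j,n+1}] = 0$ holds for all $i,j \in \oln$ by Lemma~\ref{lemma1}. Applying this commutator to $1$ gives
\[ p_i \sigma_i(p_j) \;=\; e_{i,n+1} \cdot p_j \;=\; e_{j,n+1} \cdot p_i \;=\; p_j \sigma_j(p_i), \]
which is the desired identity. Part (ii) is obtained by the identical argument applied to $[e_{n+1,i}, e_{n+1,j}] = 0$.

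For (iii), the relevant relation is $[e_{i,n+1}, e_{n+1,i}] = e_{i,i} - e_{n+1,n+1}$. A short direct calculation from \eqref{hdef}, using that $\sum_{k=1}^{n+1}\bigl(e_{k,k} - \tfrac{1}{n+1}\sum_m e_{m,m}\bigr) = 0$, shows $e_{i,i} - e_{n+1,n+1} = h_i + \olh$ inside $\uh$. Applying this commutator to $1$ therefore yields
\[ p_i \sigma_i(q_i) - q_i \sigma_i^{-1}(p_i) \;=\; h_i + \olh. \]
Setting $g_i := \sigma_i^{-1}(p_i)\, q_i$, which equals $q_i \sigma_i^{-1}(p_i)$ by commutativity of $\ca{P}$, and applying $\sigma_i$ gives $\sigma_i(g_i) = p_i \sigma_i(q_i)$, so the displayed equation rewrites as $\sigma_i(g_i) - g_i = h_i + \olh$. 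A short check verifies that $g_i = -h_i(\olh+1)$ is a particular solution, and Lemma~\ref{lemmasigmaeq} then supplies the general solution as $-h_i(\olh+1) + \tilde{g}_i$ for some $\tilde{g}_i \in \ca{P}_i$, which is exactly the statement of (iii). The only mildly subtle step is guessing the particular solution; one is led to it by noting that $\sigma_i$ sends $\olh$ to $\olh - 1$, so the ansatz $-h_i(\olh+1)$ yields $\sigma_i(-h_i(\olh+1)) - (-h_i(\olh+1)) = -(h_i-1)\olh + h_i(\olh+1) = h_i + \olh$, as required. Everything else is a routine translation of Lie brackets through Proposition~\ref{prop1}.
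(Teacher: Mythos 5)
Your proposal is correct and follows essentially the same route as the paper: parts (i) and (ii) come from the vanishing brackets $[e_{i,n+1},e_{j,n+1}]=0$ and $[e_{n+1,i},e_{n+1,j}]=0$, and part (iii) from $[e_{i,n+1},e_{n+1,i}]=h_i+\olh$ combined with Lemma~\ref{lemmasigmaeq}. The only cosmetic differences are that you apply the commutators to $1$ rather than to a general $f$ and that you exhibit the particular solution $-h_i(\olh+1)$ explicitly, which the paper simply quotes.
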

\begin{proof}
 Statement \eqref{ll1} and \eqref{ll2} are equivalent to the two identities
 \[e_{i,n+1} \cdot e_{j,n+1} \cdot f - e_{j,n+1} \cdot e_{i,n+1} \cdot f = [e_{i,n+1},e_{j,n+1}] \cdot f = 0\] and
 \[e_{n+1,i} \cdot e_{n+1,j} \cdot f - e_{n+1,j} \cdot e_{n+1,i} \cdot f  =[e_{n+1,i},e_{n+1,j}]\cdot f = 0,\]
 for $i,j \in \oln$.

 For \eqref{ll3}, consider the identity 
\begin{equation}
\label{eq2}
 [e_{i,n+1},e_{n+1,i}] \cdot f = e_{i,n+1} \cdot e_{n+1,i} \cdot f - e_{n+1,i} \cdot e_{i,n+1} \cdot f.
\end{equation}
 Using our explicit choice of basis in $\ca{P}$, we have $e_{i,i}-e_{n+1,n+1} = h_{i} + \olh$ so \eqref{eq2} becomes
\[(h_{i} + \olh) \cdot f = p_{i}\sigma_{i}(q_{i}) \cdot f - q_{i}\sigma_{i}^{-1}(p_{i}) \cdot f,\] or, equivalently,
\[h_{i} + \olh = p_{i}\sigma_{i}(q_{i}) - q_{i}\sigma_{i}^{-1}(p_{i}).\] Substituting $f:=q_{i}\sigma_{i}^{-1}(p_{i})$, it reads $\sigma_{i}(f)-f = h_{i} + \olh$.
This equation is of the form discussed in Lemma~\ref{lemmasigmaeq} so we know how to solve it. The set of solutions is precisely
 $\{ -h_{i}(\olh+1) + \tilde{g}_{i} \; | \; \tilde{g}_{i} \in \ca{P}_{i} \}$,
 as claimed in the lemma.
\end{proof}

\begin{rmk}
 \label{divcor}
Note that claims \eqref{ll1} and \eqref{ll2} of Lemma~\ref{lemma3e} are equivalent to
\begin{equation}
 \label{eq4}
p_{i}(\sigma_{i}(p_{j})-p_{j})= p_{j}(\sigma_{j}(p_{i})-p_{i})
\end{equation}
and
\begin{equation}
 \label{eq5}
q_{i}(\sigma_{i}^{-1}(q_{j})-q_{j})= q_{j}(\sigma_{j}^{-1}(q_{i})-q_{i}).
\end{equation}
\end{rmk}

\begin{lemma}
\label{lemmatech}
The polynomials $p_{1}, \ldots, p_{n},q_{1}, \ldots, q_{n}$ satisfy the following:
 \begin{enumerate}[$($i$)$]

  \item\label{tec1} $\deg_{i} p_{i}, \deg_{i}q_{i} \in \{0,1,2\}$.

  \item\label{tec2} $\deg_{i} p_{i} + \deg_{i}q_{i}=2.$

  \item\label{tec3} If $deg_{k}p_{k} = 2$, then $p_k$ has a nontrivial factorization. Similarly for $q$.

  \item\label{tec4} If $deg_{k}p_{k} = 1$, then $p_k$  is irreducible. Similarly for $q$.

  \item\label{tec5} Suppose $\deg_{k}p_{k}=1$ and $\deg_{i} p_{i}=2$. Then $p_{k}$ divides $p_{i}$. Similarly for $q$.

  \item\label{tec6} Let $S=\{\deg_{i} p_{i} \;|\; i \in \oln\}$. Then either $S \subset \{0,1\}$ or $S \subset \{1,2\}$. Similarly for $q$.

  \item\label{tec7} Suppose  $\deg_{k}p_{k}= 2 =\deg_{i} p_{i}$. Then $p_{i}$ and $p_{k}$ share a common factor. Similarly for $q$.

 \end{enumerate}
\end{lemma}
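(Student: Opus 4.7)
The plan is to establish the claims in the order \eqref{tec1}--\eqref{tec2}, \eqref{tec4}, \eqref{tec6}, \eqref{tec5}, \eqref{tec3}, \eqref{tec7}, drawing throughout on the explicit formula and relations furnished by Lemma~\ref{lemma3e}. Claims \eqref{tec1} and \eqref{tec2} will drop out of Lemma~\ref{lemma3e}\eqref{ll3} by applying the grading $\deg_i$: since $\deg_i$ is multiplicative and preserved by $\sigma_i$, we read off $\deg_i p_i + \deg_i q_i = \deg_i(-h_i(\overline{h}+1) + \tilde{g}_i) = 2$. For claim \eqref{tec4}, comparing leading coefficients in $h_k$ in the same equation gives $\mathtt{c}_k(p_k)\,\mathtt{c}_k(q_k) = -1$ in $\mathcal{P}_k$, so when $\deg_k p_k = 1$ both leading coefficients must be units in $\mathcal{P}_k$, hence scalars; thus $p_k = \alpha h_k + \beta$ with $\alpha \in \mathbb{C}^*, \beta \in \mathcal{P}_k$, and since $\mathcal{P}/(p_k) \simeq \mathcal{P}_k$ is an integral domain, $p_k$ is prime in $\mathcal{P}$.

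Claims \eqref{tec6} and \eqref{tec5} will both hinge on the same device: the coefficient of $h_ih_k$ in Lemma~\ref{lemma3e}\eqref{ll3}. For \eqref{tec6}, assuming toward contradiction that $\deg_k p_k = 2$ and $\deg_i p_i = 0$, the leading coefficient analysis forces $p_i, q_k \in \mathbb{C}^*$; Lemma~\ref{lemma3e}\eqref{ll1} then collapses to $\sigma_i(p_k) = p_k$, so $p_k \in \mathcal{P}_i$, and symmetrically \eqref{ll2} yields $q_i \in \mathcal{P}_k$; reading the $h_ih_k$-coefficient of \eqref{ll3} at index $i$ (left-hand side $0$, right-hand side $-1$) gives the desired contradiction. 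For \eqref{tec5}, the prime $p_k$ divides $p_i\sigma_i(p_k) = p_k\sigma_k(p_i)$, so either $p_k \mid p_i$ (the desired conclusion) or $p_k \mid \sigma_i(p_k)$; in the latter case, matching degrees and leading coefficients forces $\sigma_i(p_k) = p_k$, hence $p_k \in \mathcal{P}_i$, and the relation itself then gives $p_i \in \mathcal{P}_k$, triggering the same $h_ih_k$-coefficient contradiction.

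For claim \eqref{tec3}, the leading coefficient analysis yields $q_k \in \mathbb{C}^*$ and the explicit form $p_k = q_k^{-1}(-(h_k-1)\overline{h} + \tilde{g}_k)$. If $n = 1$ this is a quadratic in the single variable $h_k$, which factors over $\mathbb{C}$. For $n \geq 2$, \eqref{tec6} forces $\deg_j p_j \geq 1$ for every $j \neq k$: if $\deg_j p_j = 1$, then \eqref{tec5} gives $p_j \mid p_k$, and comparing $h_j$-coefficients of the explicit form of $p_k$ against any scalar multiple of $p_j$ rules out the associate case $p_k \in \mathbb{C}^* \cdot p_j$, producing a nontrivial factorization; if $\deg_j p_j = 2$, setting $P_k := -q_k p_k$ and $P_j := -q_j p_j$ and expanding Lemma~\ref{lemma3e}\eqref{ll1} gives
\[
(h_j - 1)P_k - (h_k - 1)P_j = P_j(\sigma_j(\tilde{g}_k) - \tilde{g}_k) - P_k(\sigma_k(\tilde{g}_j) - \tilde{g}_j),
\]
so once the right-hand side is shown to vanish (or at least to be divisible by the relevant primes), the UFD property forces $(h_k - 1) \mid P_k$. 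Claim \eqref{tec7} is then a by-product: in the degree-$(2,2)$ case we obtain $P_k = (h_k - 1)R$ and $P_j = (h_j - 1)R$ with a common factor $R$.

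The main obstacle is controlling the correction terms $\sigma_i(\tilde{g}_k) - \tilde{g}_k$ on the right-hand side of the displayed identity in the degree-$(2,2)$ situation; pinning these down will require iterating Lemma~\ref{lemma3e} through the remaining indices $l \in \oln$ and exploiting \eqref{tec6} to exclude zero-degree $p_l$'s. The other claims rest on comparatively elementary UFD and leading-coefficient computations.
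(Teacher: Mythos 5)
Your handling of parts \eqref{tec1}, \eqref{tec2}, \eqref{tec4}, \eqref{tec5} and \eqref{tec6} is correct and essentially parallel to the paper's: the identity $\mathtt{c}_k(p_k)\mathtt{c}_k(q_k)=-1$, the primality of a degree-one $p_k$, and the $h_ih_k$-coefficient contradictions are all sound (your detour through $p_i\in\mathcal{P}_k$ in \eqref{tec5} is an acceptable variant of the paper's degree argument), and your first branch of \eqref{tec3} (some $j\neq k$ with $\deg_j p_j=1$) also works. The genuine gap is in \eqref{tec3} in the case where every $j\neq k$ has $\deg_j p_j=2$, and consequently in \eqref{tec7}, which you obtain only as a by-product of that case. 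There you leave the key step unproved --- that the right-hand side $P_j(\sigma_j(\tilde g_k)-\tilde g_k)-P_k(\sigma_k(\tilde g_j)-\tilde g_j)$ vanishes, so that $(h_k-1)\mid P_k$ --- and in fact this step is false. In the modules $M_b^{\varnothing}$ of Definition~\ref{structure} one has $q_k=-1$, $p_k=(\overline{h}+b)(h_k-b-1)$, hence $P_k=(\overline{h}+b)(h_k-b-1)$ and $\tilde g_k=b\sum_{j\neq k}h_j+b+b^2$, so $\sigma_j(\tilde g_k)-\tilde g_k=-b\neq 0$ for $b\neq 0$; the right-hand side of your displayed identity equals $b(\overline{h}+b)(h_k-h_j)\neq 0$, and $(h_k-1)$ does not divide $P_k$. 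So neither the vanishing, nor the divisibility $(h_k-1)\mid P_k$, nor the claimed shape $P_k=(h_k-1)R$, $P_j=(h_j-1)R$ in \eqref{tec7} can hold: the common factor in the generic degree-$(2,2)$ situation is $\overline{h}+b$, not $h_k-1$. Your closing remark that the correction terms still need to be controlled acknowledges the gap, but the route you propose for closing it aims at a false conclusion.

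The paper avoids this entirely. For \eqref{tec3} it assumes $p_k$ irreducible with $\deg_kp_k=2$, deduces $q_k\in\mathbb{C}^*$ and the explicit form of $p_k$, and then for any $i\neq k$ uses $p_i(\sigma_i(p_k)-p_k)=p_k(\sigma_k(p_i)-p_i)$ together with the degree drop of Lemma~\ref{deg} to conclude $p_k\mid p_i$; feeding this into $\sigma_i^{-1}(p_i)q_i=-h_i(\overline{h}+1)+\tilde g_i$ and comparing $h_i$-degrees and leading coefficients forces $p_i$ to be a scalar multiple of $p_k$, whence Lemma~\ref{lemma3e}\eqref{ll1} collapses to $\sigma_i(p_k)=\sigma_k(p_k)$, impossible because $p_k$ contains the monomial $h_ih_k$. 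For \eqref{tec7} the paper argues directly: if $p_i$ and $p_k$ shared no prime factor, then $p_i\sigma_i(p_k)=p_k\sigma_k(p_i)$ would force $p_i=c\,\sigma_k(p_i)$; applying $\mathtt{c}_i$ gives $c=1$, contradicting the fact that $p_i$ depends on $h_k$. Arguments of this kind (which never require the special factor $h_k-1$) are what your degree-$(2,2)$ branch needs; as written, \eqref{tec3} in that case and \eqref{tec7} are not proved.
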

\begin{proof}
 First note that since $\tau$ is an algebra automorpism of $\uh$, by Proposition~\ref{symmprop} it suffices to prove statements \eqref{tec3} - \eqref{tec7} for the polynomials $p$.

 By part \ref{ll3} of Lemma~\ref{lemma3e} we have $p_{i},q_{i} \neq 0$. Applying $\deg_{i}$ to the same equality we get
\[\deg_{i}(p_{i}) + \deg_{i}(q_{i}) = \deg_{i}(p_{i}q_{i})=\deg_{i}(\sigma_{i}^{-1}(p_{i})q_{i})=\deg_{i}(-h_{i}(\olh + 1))=2,\]
which proves claims \eqref{tec1} and \eqref{tec2}.

We now look at claim \eqref{tec3}. The case $n=1$ is obvious so let $n \geq 2$. Assume that $\deg_{k}p_{k}=2$ with $p_{k}$ irreducible. Consider the equality $\sigma_{k}^{-1}(p_{k})q_{k} = -h_{k}(\olh + 1) + \tilde{g}_{k}$
 from Lemma~\ref{lemma3e}. By comparing the coefficients of $h_{k}^{2}$ (or by application of $\mathtt{c}_{k}$) on both sides, we see that $q_{k}=c \in \bb{C}^{*}$ so we have
\[(p_{k},q_{k}) = (-c^{-1}((h_{k}-1)\olh - \tilde{g}_{k}), c).\]
Now let $i$ be an index different from $k$. We have
 $p_{i}(\sigma_{i}(p_{k})-p_{k})= p_{k}(\sigma_{k}(p_{i})-p_{i})$ from formula  \eqref{eq4}.
Since $p_{k}$ is irreducible, it divides one of the factors on the left. However, $p_{k}$ does not divide $\sigma_{i}(p_{k})-p_{k}$ since the latter is a nonzero polynomial with lower $i$-degree than $p_{k}$.
Thus $p_{k}$ divides $p_{i}$, which implies that $\sigma_{i}^{-1}p_{k}$ divides $\sigma_{i}^{-1}p_{i}$ which, in turn, divides $-h_{i}(\olh + 1) + \tilde{g}_{i}$. Thus we have
\begin{equation}
 \label{eq3}
(h_{k}(\olh+1)-\tilde{g}_{k}) f = -h_{i}(\olh + 1) + \tilde{g}_{i}
\end{equation}
for some $f \in \ca{P}$. On the right hand side of \eqref{eq3} we have the term $-h_{i}^{2}$ which comes from the product $\tilde{g}_{k}f$. However, if $\deg_{i} f>0$, then we get terms
 of the form $h_{k}h_{i}^{1+\deg_{i} f}$ on the left of \eqref{eq3} which does not appear on the right. Thus $\deg_{i} f = 0$ and $\deg_{i} \tilde{g}_{k}=2$ and applying $\mathtt{c}_{i}$ to \eqref{eq3} gives
 $\mathtt{c}_{i}(\tilde{g}_{k})\mathtt{c}_{i}(f) = -1$.
In particular, this shows that $\mathtt{c}_{i}(f)=f$ is invertible and we have $p_{i}=ap_{k}$ for some $a \in \bb{C}^{*}$. But then $p_{i}\sigma_{i}(p_{k}) = p_{k}\sigma_{k}(p_{i})$ simplifies to
 $\sigma_{i}(p_{k}) = \sigma_{k}(p_{k})$ which contradicts the form of $p_{k}$ above. Thus $p_{k}$ is reducible. The argument for $q_{k}$ is analogous.

For claim \eqref{tec4}, let $\deg_{k}p_{k}=1$. Then by \eqref{tec2} we also have $\deg_{k}q_{k}=1$ and thus
\[(p_{k},q_{k}) = (f_{1}h_{k}+g_{1}, f_{2}h_{k}+g_{2})\]
for some $f_{1},f_{2},g_{1},g_{2} \in \ca{P}_{k}$. But then, since $\sigma_{k}^{-1}(p_{k})q_{k} = -h_{k}(\olh + 1) +\tilde{g}_{k}$ and the coefficient of $h_{k}^{2}$ on the right is $-1$, we have
 $f_{1},f_{2} \in \bb{C}^{*}$ and then, clearly, $p_{k}$ and $q_{k}$ are both irreducible.

To prove claim \eqref{tec5} we consider again the equality $p_{i}(\sigma_{i}(p_{k})-p_{k})= p_{k}(\sigma_{k}(p_{i})-p_{i})$ given by formula \eqref{eq4}.
Since $\deg_{k} p_{k}=1$, the polynomial $p_{k}$ is irreducible by claim \eqref{tec4} so $p_{k}$ divides either $p_{i}$ or $(\sigma_{i}(p_{k})-p_{k})$. However, considering the $i$-degree, we have that 
$p_{k} | (\sigma_{i}(p_{k})-p_{k})$ only if $(\sigma_{i}(p_{k})-p_{k})=0$. But the right hand side of \eqref{eq4} is nonzero since we know that $p_{i}$ has the form $-\frac{1}{c}((h_{i}-1)\olh + \tilde{g}_{i})$.
Thus the only remaining possibility is that $p_{k} | p_{i}$.

To prove claim \eqref{tec6} we suppose that there exist indices $i,k$ such that $\deg_{i}p_{i} = 2$ and $\deg_{k}p_{k}=0$. Then, as in the proof of claim \eqref{tec3}, we know that $q_{i}$ is a constant and $\deg_{k} q_{k}= 2$.
But then the equation $q_{i}\sigma_{i}^{-1}(q_{k})= q_{k}\sigma_{k}^{-1}(q_{i})$
 from Lemma~\ref{lemma3e} simplifies to $\sigma_{i}^{-1}(q_{k})= q_{k}$,
which does not hold since $q_{k}$ depends on $i$.

We now turn to claim \eqref{tec7}. Let $p_{i}=\alpha_{1}\alpha_{2}$ and $p_{k}=\beta_{1}\beta_{2}$ be the corresponding decompositions into prime polynomials. Then the equation
 $p_{i}\sigma_{i}(p_{k})= p_{k}\sigma_{k}(p_{i})$ from Lemma~\ref{lemma3e}
 is equivalent to \[\alpha_{1}\alpha_{2}\sigma_{i}(\beta_{1}\beta_{2}) = \beta_{1}\beta_{2}\sigma_{k}(\alpha_{1}\alpha_{2}).\]
Suppose $p_{i}$ and $p_{k}$ does not share a common factor. Then we have $\alpha_{1}\alpha_{2} = c\sigma_{k}(\alpha_{1}\alpha_{2})$ for some $c \in \bb{C}^{*}$.
 By applying $\mathtt{c}_{i}$ to both sides, we obtain $c=1$ and
 $p_{i}=\sigma_{k}p_{i}$ which is not
 possible since $p_{i}$ has the form $c(-h_{i}\olh + \tilde{g}_{i})$ 
 where $\tilde{g}_{i}\in\mathcal{P}_i$ and thus depends on $k$. Therefore $p_{i}$ and $p_{k}$ share a common factor.
\end{proof}

\begin{lemma}
\label{lemmalc}
With respect to the grading $\deg_{i}$, the leading coefficients of both $p_{i}$ and $q_{i}$ are invertible, 
that is $\mathtt{c}_{i}(p_{i}), \mathtt{c}_{i}(q_{i}) \in \mathbb{C}^{*}$.
\end{lemma}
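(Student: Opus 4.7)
The plan is to extract both claims simultaneously from the single identity in Lemma~\ref{lemma3e}\eqref{ll3}, namely
\[\sigma_{i}^{-1}(p_{i})\,q_{i} = -h_{i}(\overline{h}+1) + \tilde{g}_{i}, \qquad \tilde{g}_{i}\in\ca{P}_{i},\]
by applying the leading-coefficient map $\mathtt{c}_{i}$ to both sides.

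First I would recall two properties of $\mathtt{c}_{i}$ established in Section~\ref{sigmas}: it is multiplicative, and $\mathtt{c}_{i}(\sigma_{i}^{-1}(f)) = \mathtt{c}_{i}(f)$ for every $f\in\ca{P}$ (since $\sigma_{i}^{-1}$ only shifts $h_{i}$ by $+1$ and fixes the other variables, leaving the top coefficient in $h_{i}$ unchanged). Applying $\mathtt{c}_{i}$ to the left hand side therefore yields
\[\mathtt{c}_{i}\bigl(\sigma_{i}^{-1}(p_{i})\,q_{i}\bigr) = \mathtt{c}_{i}(p_{i})\,\mathtt{c}_{i}(q_{i}).\]

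Next I would analyse the right hand side. Writing $\olh = h_{1}+\cdots+h_{n}$ gives $-h_{i}(\olh+1) = -h_{i}^{2} - h_{i}\sum_{j\neq i}h_{j} - h_{i}$, which has $\deg_{i}=2$ with leading coefficient $-1$. Since $\tilde{g}_{i}\in\ca{P}_{i}$ satisfies $\deg_{i}\tilde{g}_{i}=0$, it cannot affect the $h_{i}^{2}$-coefficient, so $\mathtt{c}_{i}\bigl(-h_{i}(\olh+1)+\tilde{g}_{i}\bigr) = -1$. Combining with the previous display,
\[\mathtt{c}_{i}(p_{i})\,\mathtt{c}_{i}(q_{i}) = -1.\]

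The conclusion is then immediate: $\mathtt{c}_{i}(p_{i})$ and $\mathtt{c}_{i}(q_{i})$ lie in the polynomial ring $\ca{P}_{i}$, whose group of units is precisely $\bb{C}^{*}$, so each factor must be a nonzero complex scalar. I do not anticipate any real obstacle here; the whole argument is a two-line consequence of Lemma~\ref{lemma3e}\eqref{ll3} once the multiplicativity of $\mathtt{c}_{i}$ and its invariance under $\sigma_{i}^{\pm 1}$ are in hand. The only thing worth double-checking is that $\tilde{g}_{i}$ genuinely has $\deg_{i}=0$, which is part of the statement of Lemma~\ref{lemma3e}\eqref{ll3} and thus causes no trouble.
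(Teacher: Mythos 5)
Your argument is correct and is essentially identical to the paper's proof: both apply $\mathtt{c}_{i}$ to the identity $\sigma_{i}^{-1}(p_{i})q_{i} = -h_{i}(\olh+1)+\tilde{g}_{i}$ from Lemma~\ref{lemma3e}\eqref{ll3}, use multiplicativity of $\mathtt{c}_{i}$ together with $\mathtt{c}_{i}(\sigma_{i}^{-1}(p_{i}))=\mathtt{c}_{i}(p_{i})$, and read off $\mathtt{c}_{i}(p_{i})\mathtt{c}_{i}(q_{i})=-1$, forcing both factors to be units of $\ca{P}_{i}$, i.e.\ nonzero scalars. No issues.
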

\begin{proof}
 By Lemma~\ref{lemma3e}
 we have 
 $\sigma_{i}^{-1}(p_{i})q_{i} = -h_{i}(\overline{h}+1) + \tilde{g}_{i}$ 
 for some $\tilde{g}_{i}\in\mathcal{P}_i$. Applying $\mathtt{c}_{i}$ to this, we get
\[ \mathtt{c}_{i}(\sigma_{i}^{-1}(p_{i}))\mathtt{c}_{i}(q_{i}) = -1, \]
and $\mathtt{c}_{i}(\sigma_{i}^{-1}(p_{i})) = \mathtt{c}_{i}(p_{i})$ which shows that $\mathtt{c}_{i}(p_{i}), \mathtt{c}_{i}(q_{i}) \in \mathbb{C}^{*}$ as stated.
\end{proof}

\begin{rmk}
 \label{lead1rmk}
Note that, if a module is determined by $p_{1}, \ldots, p_{n}, q_{1}, \ldots, q_{n}$, we can apply the functor $\mathrm{F}_{a}$ with
\[a=(\mathtt{c}_{1}(p_{1})^{-1},\mathtt{c}_{2}(p_{2})^{-1}, \ldots, \mathtt{c}_{n}(p_{n})^{-1},1)\] to obtain a module where the leading coefficient of $p_{i}$ is $1$ for all $i\in \oln$.
Thus from here on we will assume that the leading coefficient of each $p_{i}$ is $1$. All other module structures can then be obtained by applying the functors $\mathrm{F}_{a}$.
\end{rmk}

\begin{lemma}For each $i \in \oln$ we have:
\label{lemmapform}
\begin{enumerate}[$($i$)$]
 \item\label{pform1} The irreducible components of $p_{i}$ have the form $h_{i} + \beta_{i}$
where $\beta_{i} \in \ca{P}_{i}$.
 \item\label{pform2} $\beta_{i} = \sum_{j \neq i} c_{j}^{(i)}h_{j} + c_{0}^{(i)}$ for some constants $c_{j}^{(i)}$.
 \item\label{pform3} $c_{j}^{(i)} \in \{0,1\}$ for all $i \in \oln$, $j \in \oln \setminus \{i\}$.
\end{enumerate}
\end{lemma}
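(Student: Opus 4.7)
For part (i) the argument is a case analysis on $\deg_{i} p_{i} \in \{0, 1, 2\}$ (Lemma~\ref{lemmatech}(i)). If $\deg_{i} p_{i} = 0$, then Lemma~\ref{lemmalc} forces $p_{i} = \mathtt{c}_{i}(p_{i}) \in \bb{C}^{*}$, and the normalization of Remark~\ref{lead1rmk} gives $p_{i} = 1$, which has no irreducible factors. If $\deg_{i} p_{i} = 1$, the normalization gives $p_{i} = h_{i} + \beta_{i}$ directly with $\beta_{i} \in \ca{P}_{i}$, and irreducibility follows from Lemma~\ref{lemmatech}(iv). If $\deg_{i} p_{i} = 2$, Lemma~\ref{lemmatech}(iii) gives a nontrivial factorization $p_{i} = \gamma\delta$; multiplicativity of $\mathtt{c}_{i}$ together with $\mathtt{c}_{i}(p_{i}) = 1$ rules out any split with $\deg_{i}\gamma = 0$ (such a factor would equal its own $\mathtt{c}_{i}$-value, be a unit in $\bb{C}^{*}$, and make the factorization trivial), so $\deg_{i}\gamma = \deg_{i}\delta = 1$ and after absorbing constants we obtain $p_{i} = (h_{i} + \alpha_{i})(h_{i} + \beta_{i})$ with $\alpha_{i}, \beta_{i} \in \ca{P}_{i}$.

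For parts (ii) and (iii) I would fix an irreducible factor $h_{i} + \beta_{i}$ of $p_{i}$ and, for each $j \neq i$, exploit the identity $p_{i}\sigma_{i}(p_{j}) = p_{j}\sigma_{j}(p_{i})$ from Lemma~\ref{lemma3e}(i). Expanding this and comparing the leading coefficient in $h_{i}$ produces a relation of the form $c_{d}\bigl(-d - (\sigma_{j} - \mathrm{id})(\beta_{i})\bigr) = 0$, where $d$ is the $h_{i}$-degree of a relevant irreducible factor of $p_{j}$ and $c_{d}$ is its leading $h_{i}$-coefficient. Either the factor of $p_{j}$ in question is independent of $h_{i}$, or $(\sigma_{j} - \mathrm{id})(\beta_{i}) = -d$ is a nonzero integer constant. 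In the latter case Lemma~\ref{deg} forces $\deg_{j}\beta_{i} = 1$ with the $h_{j}$-coefficient of $\beta_{i}$ equal to the scalar $d \in \bb{Z}_{\geq 1}$. Running the symmetric argument (swapping the roles of $i$ and $j$) then forces $d = 1$ and the $h_{i}$-coefficient of the corresponding factor of $p_{j}$ also to be $1$.

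The conclusion of this analysis is: for each $j \neq i$, the polynomial $\beta_{i}$ has the form $a_{j} h_{j} + b_{j}$ with $a_{j} \in \{0, 1\} \subset \bb{C}$ and $b_{j} \in \pol{h_{j-1},h_{j+1}} \cap \ca{P}_{i}$. The essential feature is that $a_{j}$ is a complex scalar, not merely an element of the surrounding polynomial ring: this rules out both an $h_{j}^{2}$ term in $\beta_{i}$ (which would make $\deg_{j}\beta_{i} \geq 2$) and any cross term $h_{j} h_{m}$ with $m \neq i, j$ (which would make the $h_{j}$-coefficient involve $h_{m}$). Applying this for every $j \neq i$ one reads off directly that $\beta_{i} = c_{0}^{(i)} + \sum_{j \neq i} c_{j}^{(i)} h_{j}$ with $c_{j}^{(i)} \in \{0, 1\}$, proving (ii) and (iii).

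The main obstacle I anticipate is handling the case $\deg_{i} p_{i} = 2$: there the relation $p_{i}\sigma_{i}(p_{j}) = p_{j}\sigma_{j}(p_{i})$ involves products of several linear factors on each side, and in order to isolate the constraint on an individual irreducible factor $h_{i} + \beta_{i}$ of $p_{i}$ one must match the UFD decompositions of the two sides as multisets of irreducible polynomials in $\ca{P}$. Carefully sorting out which factor on one side corresponds to which factor on the other, especially when several factors share the same $h_{i}$- or $h_{j}$-degree (which is exactly when the degree-comparison argument does not uniquely pin down the matching), is the most delicate step; once the correct pairing is established, the degree-comparison argument sketched above applies to each pair.
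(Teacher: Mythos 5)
Part (i) of your proposal is correct and essentially identical to the paper's argument: degrees $0$ and $1$ are handled by Lemma~\ref{lemmalc} and the normalization, and in degree $2$ the nontrivial factorization from Lemma~\ref{lemmatech}\eqref{tec3} together with multiplicativity of $\mathtt{c}_{i}$ forces two factors each of $i$-degree $1$. Your leading-coefficient computation for (ii)--(iii) is also a genuinely nice observation in the case $\deg_{i}p_{i}=1$: writing $p_{j}=\sum_m a_m h_i^m$ with $\deg_i p_j=d$, comparing the coefficients of $h_i^{d}$ in $p_{i}\sigma_{i}(p_{j})=p_{j}\sigma_{j}(p_{i})$ does give $(\sigma_{j}-\mathrm{id})(\beta_{i})=-d$, whence by Lemma~\ref{deg} the $h_j$-coefficient of $\beta_i$ is the \emph{integer} $d=\deg_{i}p_{j}$. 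This is slicker than the paper's case-by-case treatment of $(\deg_ip_i,\deg_kp_k)\in\{(1,0),(1,1),(1,2)\}$ and immediately yields constancy of the coefficients.

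However, there is a genuine gap, and it sits exactly where you flag it. First, even in the linear case your argument only gives $c_{j}^{(i)}=\deg_{i}p_{j}\in\{0,1,2\}$; excluding the value $2$ is not automatic and in the paper comes from the $q$-polynomials (when $\deg_jp_j=2$ one has $q_j=-1$, and Lemma~\ref{lemma3e}\eqref{ll3} forces the two factors of $p_j$ to be \emph{complementary}, $\overline{\beta}_{j}=\sum_{m\neq j}h_{m}-\beta_{j}-1$, so each factor carries $h_i$-coefficient in $\{c,1-c\}$). Your proposal never invokes $q_{1},\dots,q_{n}$ or Lemma~\ref{lemma3e}\eqref{ll3} for parts (ii)--(iii), yet this complementarity is indispensable. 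Second, when $\deg_{i}p_{i}=2$ the identity $p_{i}\sigma_{i}(p_{j})=p_{j}\sigma_{j}(p_{i})$ constrains only symmetric data of the two factors (the coefficient comparison produces $(\sigma_{j}-\mathrm{id})(\beta_{i}^{(1)}+\beta_{i}^{(2)})=-\deg_{i}p_{j}$, not a condition on each $\beta_{i}^{(r)}$ separately), so the "pairing of irreducible factors" you defer is not a bookkeeping formality but the substance of the proof. The paper resolves it (its equations \eqref{eq6}--\eqref{eq8}) by first ruling out $\alpha_{1}\alpha_{2}\mid\beta_{1}\beta_{2}$ using Lemma~\ref{lemma3e}\eqref{ll3} (otherwise $p_{i}$ would lie in $(\ca{P}_{i}\cap\ca{P}_{k})[h_{i}+h_{k}]$), then deducing $\alpha_{1}=\alpha_{2}-(1-c_{k}^{(i)})$ and $\alpha_{2}=(1-c_{k}^{(i)})\beta_{1}$ by comparing $h_{i}$- and $h_{k}$-coefficients, and finally matching the remaining monic linear factors. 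Without supplying this analysis (or an equivalent one), the $(2,2)$ case -- and hence parts (ii) and (iii) in general -- remains unproved.
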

\begin{proof}
 If $deg_{i} p_{i}$ is $0$ or $1$, then claim \eqref{pform1} follows from Lemma~\ref{lemmalc}. If  $deg_{i} p_{i} = 2$, then 
by Lemma~\ref{lemmatech}\eqref{tec3} we know that
 $\sigma_{i}^{-1}(p_{i})$ and thus also $p_{i}$ has a nontrivial factorization, say $p_{i}=fg$. But then $2 = \deg_{i}(f)+\deg_{i}(g)$, and
 $1 = \mathtt{c}_{i}(f)\mathtt{c}_{i}(g)$, which shows that $\deg_{i}(f) = 1 = \deg_{i}(g)$ since the factorization was nontrivial, and thus both $f$ and $g$ have the prescribed form.

To prove \eqref{pform2}, we need only show that for each $k \in \oln \setminus \{i\}$ we either 
have $\deg_{k} p_{i}=0$ or we have $\deg_{k} p_{i}=1$ and $\mathtt{c}_{k}(p_{i}) \in \bb{C}$.
 Suppose first that $\deg_{i}p_{i}=1$, so that $p_{i}=h_{i}+\beta_{i}$. We consider equation \eqref{eq4}:
\[p_{i}(\sigma_{i}(p_{k})-p_{k})= p_{k}(\sigma_{k}(p_{i})-p_{i}).\]
 If $\deg_{k} p_{k}=0$, then $p_{k}=1$ and \eqref{eq4} reads $0=\sigma_{k}(p_{i})-p_{i}$. This shows that $\deg_{k}p_{i}=0$ and we are done. If $\deg_{k} p_{k}=1$, then claim \eqref{pform1} and equation \eqref{eq4} give
\[(h_{i} + \beta_{i})(\sigma_{i}(\beta_{k})-\beta_{k})=(h_{k} + \beta_{k})(\sigma_{k}(\beta_{i})-\beta_{i}).\]
Since $h_{i} + \beta_{i}$ is prime by Lemma~\ref{lemmatech}, this term either divides the factor $(\sigma_{k}(\beta_{i})-\beta_{i})$, 
which implies $(\sigma_{k}(\beta_{i})-\beta_{i})=0$ and thus $\deg_{k} p_{i} = \deg_{k} \beta_{i} = 0$ and we are done;
 or $h_{i} + \beta_{i}$ divides $h_{k} + \beta_{k}$ which is also prime and thus $h_{i} + \beta_{i} = c(h_{k}+\beta_{k})$ for some $c\in \bb{C}^{*}$. Define
\[\gamma:= h_{i}-c\beta_{k} = ch_{k}-\beta_{i}.\] Then $\gamma \in \ca{P}_{i} \cap \ca{P}_{k}$, and we have $\beta_{i} = ch_{k} - \gamma$ as required.

 If $\deg_{k} p_{k}=2$, then by Lemma~\ref{lemmatech}\eqref{tec5} we know that $p_{k}$ shares a common factor with $p_{i}$, and again we have $h_{i} + \beta_{i} = c(h_{k}+\beta_{k})$
 for some nonzero $c$ and the same argument works. 
Suppose now instead that $\deg_{i}p_{i}=2$ and let $(h_{i}+\beta_{i})$ be a factor of $p_{i}$. Then we have
$p_{i}=(h_{i}+\beta_{i})(h_{i}+\overline{\beta}_{i})$,
where by Lemma~\ref{lemma3e} we have $q_{i}=-1$, and 
\[\overline{\beta}_{i} = \sum_{j\neq i}h_{j}-\beta_{i}-1.\]
Let $\deg_{k} p_{k}$ have degree $1$ or $2$. Then by Lemma~\ref{lemmatech} we know that $p_{k}$
 shares a common factor with $p_{i}$ and we again get an equality of the form $h_{i} + \beta_{i} = c(h_{k}+\beta_{k})$ for some $c \in \bb{C}^{*}$. By the same argument as above we again see that $\beta_{i}$ has the stated form.
But $\beta_{i}$ has the prescribed form if and only if $\overline{\beta}_{i}$ does, so we are done.

To prove claim \eqref{pform3} we fix an index $i\in \oln$. We shall prove that $c_{k}^{(i)} \in \{0,1\}$ for each $k \in \oln \setminus \{i\}$.
Suppose first that $\deg_{i} p_{i} = 1$. Then we have
\[p_{i} = (h_{i} + \sum_{j \neq i} c_{j}^{(i)}h_{j} + c_{0}^{(i)})\]
and, by Lemma~\ref{lemma3e}, we have
\begin{equation}
\label{qexp}
q_{i} = -(h_{i} + \sum_{j \neq i} (1-c_{j}^{(i)})h_{j} - c_{0}^{(i)}).
\end{equation}
In the proof of claim \eqref{pform2} for $(\deg_{i}p_{i},\deg_{k}p_{k})=(1,0)$ we had $\deg_{k}p_{i}=0$ and thus $c_{k}^{(i)}=0$.

 For $(\deg_{i}p_{i},\deg_{k}p_{k})=(1,1)$, we have 
\[p_{k} = (h_{k} + \sum_{j \neq k} c_{j}^{(k)}h_{j} + c_{0}^{(k)})\]
and equation \eqref{eq4} in this case reads 
\[c_{i}^{(k)}(h_{i} + \sum_{j \neq i} c_{j}^{(i)}h_{j} + c_{0}^{(i)}) = c_{k}^{(i)}(h_{k} + \sum_{j \neq k} c_{j}^{(k)}h_{j} + c_{0}^{(k)}).\]
Applying $\mathtt{c}_{i}$ and $\mathtt{c}_{k}$ separately to this equation we get
\[\begin{cases}
   c_{i}^{(k)}=c_{k}^{(i)}c_{i}^{(k)},\\
   c_{k}^{(i)}=c_{i}^{(k)}c_{k}^{(i)};
  \end{cases}
\]
which has solutions $(c_{k}^{(i)},c_{i}^{(k)}) \in \{(0,0),(1,1)\}$. Thus, in particular, $c_{k}^{(i)} \in \{0,1\}$.

Next, suppose $(\deg_{i}p_{i},\deg_{k}p_{k})=(1,2)$. Then $q_{k}=-1$ and, by Lemma~\ref{lemma3e}, $q_{i}$ does not depend on $h_{k}$. But then from our explicit form \eqref{qexp}
 of $q_{i}$ above we see that $1-c_{k}^{(i)}=0$.
We have now proved that any $p_{i}$ with $\deg_{i}p_{i}=1$ has the prescribed form.

Next, let $\deg_{i}p_{i}=2$. Then 
\[p_{i}=(h_{i} + \sum_{j \neq i} c_{j}^{(i)}h_{j} + c_{0}^{(i)})(h_{i} + \sum_{j \neq i} (1-c_{j}^{(i)})h_{j} - c_{0}^{(i)}-1)\]
and $q_{i}=-1$. By assumption, $\deg_{k}p_{k} \neq 0$. For $\deg_{k}p_{k} = 1$, the polynomial 
$p_{k}$ satisfies \eqref{pform3} and divides $p_{i}$ by Lemma~\ref{lemmatech}. Since the coefficient at
$h_{k}$ in $p_{k}$ is $1$, we obtain either $c_{k}^{(i)} = 1$ or $1-c_{k}^{(i)}=1$, as desired.
Finally, we consider the case
 $(\deg_{i}p_{i},\deg_{k}p_{k})=(2,2)$.
We shall again show that $c_{k}^{(i)} \in \{0,1\}$. Let
\[p_{i}=\alpha_{1}\alpha_{2}  , \quad p_{k}=\beta_{1}\beta_{2} \quad \text{ and } q_{i}=q_{k}=-1,\]
where
\begin{equation}
 \label{eq7}
\begin{cases}
    \alpha_{1}=h_{i} + \sum_{j \neq i} c_{j}^{(i)}h_{j} + c_{0}^{(i)},\\
    \alpha_{2}=h_{i} + \sum_{j \neq i} (1-c_{j}^{(i)})h_{j} - c_{0}^{(i)}-1,\\
    \beta_{1}=h_{k} + \sum_{j \neq k} c_{j}^{(k)}h_{j} + c_{0}^{(k)},\\
    \beta_{2}=h_{k} + \sum_{j \neq k} (1-c_{j}^{(k)})h_{j} - c_{0}^{(k)}-1.
  \end{cases}
\end{equation}
Then the equality 
$p_{i}\sigma_{i}p_{k} = p_{k}\sigma_{k}p_{i}$ 
from Lemma~\ref{lemma3e} becomes
\begin{equation}
\label{eq6}
\alpha_{1}\alpha_{2}(\beta_{1}-c_{i}^{(k)})(\beta_{2}-(1-c_{i}^{(k)}))=\beta_{1}\beta_{2}(\alpha_{1}-c_{k}^{(i)})(\alpha_{2}-(1-c_{k}^{(i)})).
\end{equation}
If $\alpha_{1}\alpha_{2} | \beta_{1}\beta_{2}$, formula \eqref{eq6} becomes $\sigma_{i}(p_{i}) = \sigma_{k}(p_{i})$ which implies
\[p_{i} \in \bb{C}[h_{i}+h_{k},h_{1},\ldots, h_{i-1},h_{i+1}, \ldots, h_{k-1},h_{k+1}, \ldots, h_{n}] \simeq (\ca{P}_{i} \cap \ca{P}_{k})[h_{i}+h_{k}].\]
Here we view $(\ca{P}_{i} \cap \ca{P}_{k})[h_{i}+h_{k}]$ as the subring of $\ca{P}$ consisting of polynomials in the single variable $(h_{i}+h_{k})$ with coefficients in $\ca{P}_{i} \cap \ca{P}_{k}$.
This contradicts the fact that $\sigma_{i}^{-1}(p_{i})q_{i} = -h_{i}(\olh+1) + \tilde{g}_{i}$ from Lemma~\ref{lemma3e}. Thus,
 without loss of generality, $\alpha_{1}$ does not divide $\beta_{1}\beta_{2}$. Moreover, $\alpha_{1}$ only divides $(\alpha_{1}-c_{k}^{(i)})$ if $c_{k}^{(i)}=0$, so we
 may assume that $\alpha_{1}$ divides $(\alpha_{2}-(1-c_{k}^{(i)}))$. Considering the coefficient at $h_{i}$, this happens only if  $\alpha_{1}=\alpha_{2}-(1-c_{k}^{(i)})$, so formula \eqref{eq6} becomes
\[\alpha_{2}(\beta_{1}-c_{i}^{(k)})(\beta_{2}-(1-c_{i}^{(k)}))=\beta_{1}\beta_{2}(\alpha_{2}-1).\]
This shows that $\alpha_{2}$ divides $\beta_{1}$ or $\beta_{2}$. Without loss of generality we may assume that $\alpha_{2} = c\beta_{1} \text{ for some }c\in\bb{C}^{*}$.
 Comparing the coefficient of $h_{k}$ in our explicit expressions of $\alpha_{2}$ and $\beta_{1}$ in 
 formula \eqref{eq7}, we get $c=(1-c_{k}^{(i)}) \neq 0$ and thus
\begin{equation}
 \label{eq8}
(\beta_{1}-c_{i}^{(k)})(\beta_{2}-(1-c_{i}^{(k)}))=\beta_{2}(\beta_{1}-(1-c_{k}^{(i)})^{-1}).
\end{equation}
The four prime factors occurring in \eqref{eq8} all have $h_{k}$-coefficient $1$, so one divides another if and only if they are equal.
Now $\beta_{2} = (\beta_{2}-(1-c_{i}^{(k)}))$ gives $c_{i}^{(k)}=1$ and thus we have $(\beta_{1}-1)=(\beta_{1}-(1-c_{k}^{(i)})^{-1})$, which gives $c_{k}^{(i)}=0$.
 On the other hand, $\beta_{2}=(\beta_{1}-c_{i}^{(k)})$ gives
$(\beta_{1}-1)=(\beta_{1}-(1-c_{k}^{(i)})^{-1})$, which also implies $c_{k}^{(i)}=0$. We have now considered all cases, and can conclude that $c_{k}^{(i)}\in \{0,1\}$ always.
\end{proof}

From here on, we shall assume that $\deg_{i}p_{i} \in \{1,2\}$ for all $i \in \oln$. By Proposition~\ref{symmprop}, all other module structures can then be obtained by application of the functor $\mathrm{F}_{\tau}$.
Define a binary relation $\sim$ on $\oln$ by 
\begin{equation}
 \label{eq9}
i \sim j \Longleftrightarrow \text{$p_{i}$ and $p_{j}$ share a common prime divisor.}
\end{equation}
Note that $\sim$ is symmetric and reflexive (since we assume that all $\deg_{i}p_{i} \geq 1$), but it is in general not transitive: for example we would have $h_{1} \sim h_{1}h_{2} \sim h_{2}$
 while $h_{1}\not\sim h_{2}$.
 
\begin{prop}
\label{pqprop}
If $\deg_{k} p_{k}=1$, then for some $b_{k} \in \mathbb{C}$ we have
\[p_{k}=\big( \sum_{j\sim k}h_{j}+b_{k} \big)\qquad \text{ and }\qquad  q_{k}=-\big( h_{k} + \sum_{j\not\sim k}h_{j} -b_{k} \big).\]
\end{prop}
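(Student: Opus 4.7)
The plan is to start from the structural data supplied by Lemma~\ref{lemmapform}: since $\deg_k p_k = 1$, the polynomial $p_k$ is irreducible and takes the form $p_k = h_k + \sum_{j\neq k} c_j^{(k)} h_j + c_0^{(k)}$ with each $c_j^{(k)}\in\{0,1\}$. Setting $b_k:=c_0^{(k)}$, the problem reduces to showing that for each $j\in\oln\setminus\{k\}$,
\[
c_j^{(k)} = 1 \quad \Longleftrightarrow \quad j \sim k.
\]
Once this is established, the claimed expression for $p_k$ follows by including the term $h_k$ (since $k\sim k$), and the expression for $q_k$ falls out of the identity \eqref{qexp} in the proof of Lemma~\ref{lemmapform}\eqref{pform3}: the coefficient of $h_j$ there is $-(1-c_j^{(k)})$, which is $-1$ precisely when $j\not\sim k$.

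For the equivalence I use that $p_k$ is prime, so $j\sim k$ amounts to $p_k \mid p_j$, and I split on $\deg_j p_j\in\{0,1,2\}$; all three cases are essentially embedded in the proof of Lemma~\ref{lemmapform}. If $\deg_j p_j = 0$, the $(1,0)$ subcase in the proof of Lemma~\ref{lemmapform}\eqref{pform2} gives $\sigma_j(p_k) = p_k$, so $\deg_j p_k = 0$ and hence $c_j^{(k)} = 0$; and $p_j=1$ has no prime divisor, so $j\not\sim k$. If $\deg_j p_j = 1$, the $(1,1)$ subcase in the proof of \eqref{pform3} gives $(c_j^{(k)}, c_k^{(j)})\in\{(0,0),(1,1)\}$ via the identity $c_j^{(k)} p_j = c_k^{(j)} p_k$; when both are $1$ this forces $p_j = p_k$, and conversely $p_k\mid p_j$ with both irreducible and normalized forces $p_j=p_k$, hence $c_j^{(k)}=1$. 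If $\deg_j p_j = 2$, Lemma~\ref{lemmatech}\eqref{tec5} already gives $p_k\mid p_j$, hence $j\sim k$, while the $(1,2)$ subcase in the proof of \eqref{pform3} extracts $1-c_j^{(k)}=0$ from the fact that $q_j=-1$ forces $q_k$ to be independent of $h_j$ through the explicit form \eqref{qexp}.

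Substituting $c_j^{(k)}=1$ iff $j\sim k$ into the expression for $p_k$ gives $p_k = \sum_{j\sim k} h_j + b_k$, and substituting the same indicator into \eqref{qexp} yields $q_k = -\bigl(h_k + \sum_{j\not\sim k} h_j - b_k\bigr)$, as required. The proof is therefore mostly bookkeeping on top of Lemma~\ref{lemmapform}; the genuinely delicate ingredient is the $(1,2)$ case, where one must exploit both the divisibility $p_k\mid p_j$ and the rigid monic-in-$h_j$ normalization of the two factors of $p_j$ in order to pin down $c_j^{(k)}=1$ --- and this is precisely the identification already carried out inside the proof of Lemma~\ref{lemmapform}\eqref{pform3}.
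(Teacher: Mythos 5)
Your proposal is correct and follows essentially the same route as the paper: it reduces the statement to showing $c_j^{(k)}=1$ exactly when $j\sim k$, splitting on $\deg_j p_j$ and using the identity $c_k^{(j)}p_k=c_j^{(k)}p_j$ from \eqref{eq4} together with the divisibility from Lemma~\ref{lemmatech}\eqref{tec5} and the formula \eqref{qexp} for $q_k$, which is precisely the paper's argument (with the degree-$2$ case handled by the same computation already embedded in the proof of Lemma~\ref{lemmapform}\eqref{pform3}).
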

\begin{proof}
Since $\deg_{k} p_{k}=1$ by Lemma~\ref{lemmatech}, the polynomial $p_{k}$ divides all polynomials $p_{j}$ of degree $2$ and hence the coefficient at $h_{j}$ in $p_{k}$ is $1$ for all $j$ with $\deg_{j} p_{j}=2.$
Assume now instead that $p_{j}$ has degree $1$ as well. Then equation \eqref{eq4} becomes 
$c_{k}^{(j)}p_{k} = c_{j}^{(k)}p_{j}$. Now if $j \sim k$, then $p_{j}=p_{k}$ and  $c_{j}^{(k)}=1$. On the other hand, if $j \not\sim k$, then $p_{j} \neq p_{k}$ which implies $c_{j}^{(k)}=0$,
 so the formula for $p_{k}$ in the proposition is correct and $q_{k}$ is uniquely determined by $p_{k}$ (see formula \eqref{qexp} in the proof of Lemma~\ref{lemmapform}).
\end{proof}

Next we prove that either all the first degree polynomials $p_{i}$ coincide, or they are pairwise different.
\begin{lemma}
\label{lemmadeg12}
Let \[\oln_{1} := \{i \in \oln |\deg_{i} p_{i}=1\}\; \; \text{ and let } \; \; \oln_{2} := \{j \in \oln |\deg_{j} p_{j}=2\} = \oln \setminus \oln_{1}.\]
Then either $p_{i}=p_{j}$ for all $i,j \in \oln_{1}$ or $p_{i}\neq p_{j}$ for all distinct $i,j \in \oln_{1}$.
\end{lemma}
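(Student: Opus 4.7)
The strategy is to verify that $\sim$ restricted to $\oln_{1}$ is an equivalence relation, and then assume toward a contradiction that a $\sim$-class of size at least two coexists with an element of $\oln_{1}$ outside it. The contradiction will come from the commutation relation $q_{i}\sigma_{i}^{-1}(q_{k})=q_{k}\sigma_{k}^{-1}(q_{i})$ of Lemma~\ref{lemma3e}\eqref{ll2}.

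First I would verify that for distinct $i,j\in\oln_{1}$, the relation $i\sim j$ is equivalent to $p_{i}=p_{j}$. By Lemma~\ref{lemmatech}\eqref{tec4} both $p_{i}$ and $p_{j}$ are irreducible, and by Lemma~\ref{lemmapform} each has the form ``$h_{l}$ plus a linear combination of the remaining $h$'s with coefficients in $\{0,1\}$, plus a constant''. A shared prime divisor forces $p_{i}=cp_{j}$ for some $c\in\mathbb{C}^{*}$, and matching the $h_{i}$- and $h_{j}$-coefficients on both sides forces $c=1$. In particular $\sim$ is transitive on $\oln_{1}$ and hence an equivalence relation there.

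Now suppose, for contradiction, that on $\oln_{1}$ the relation is neither trivial nor full. Then I can choose pairwise distinct $i,j,k\in\oln_{1}$ with $p_{i}=p_{j}$ and $p_{k}\neq p_{i}$; in particular $k\not\sim i$ and $k\not\sim j$. Writing out $q_{i}$ and $q_{k}$ via Proposition~\ref{pqprop},
\[
q_{i}=-\Bigl(h_{i}+\sum_{l\neq i,\;l\not\sim i}h_{l}-b_{i}\Bigr),\qquad q_{k}=-\Bigl(h_{k}+\sum_{l\neq k,\;l\not\sim k}h_{l}-b_{k}\Bigr),
\]
the key observations are: $h_{j}$ does not appear in $q_{i}$ (since $j\sim i$) but does appear in $q_{k}$ (since $j\not\sim k$); the variable $h_{i}$ appears in $q_{k}$ with coefficient $-1$; and $h_{k}$ appears in $q_{i}$ with coefficient $-1$.

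Finally I would apply the relation $q_{i}\sigma_{i}^{-1}(q_{k})=q_{k}\sigma_{k}^{-1}(q_{i})$. Since $h_{i}$ appears linearly in $q_{k}$ with coefficient $-1$, we have $\sigma_{i}^{-1}(q_{k})=q_{k}-1$; symmetrically $\sigma_{k}^{-1}(q_{i})=q_{i}-1$. The equation then collapses to $q_{i}=q_{k}$, which is impossible because $h_{j}$ appears in $q_{k}$ but not in $q_{i}$. The main subtlety is noticing that the parallel $p$-side identity $p_{i}\sigma_{i}(p_{k})=p_{k}\sigma_{k}(p_{i})$ gives no information here---since $p_{i}$ does not depend on $h_{k}$ and $p_{k}$ does not depend on $h_{i}$, both $\sigma$'s act trivially---so the contradiction really has to be extracted from the $q$-relations rather than from the $p$-relations.
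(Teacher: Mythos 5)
Your proof is correct and follows essentially the same route as the paper: both arguments apply the relation $q_{i}\sigma_{i}^{-1}(q_{k})=q_{k}\sigma_{k}^{-1}(q_{i})$ of Lemma~\ref{lemma3e} to a pair $i,k\in\oln_{1}$ with $p_{i}\neq p_{k}$, note that the explicit form of the $q$'s from Proposition~\ref{pqprop} collapses it to $q_{i}=q_{k}$, and then read off the contradiction (in the paper's phrasing, the equality $A_{i}\setminus\{i\}=A_{k}\setminus\{k\}$ of non-equivalence sets) from the presence or absence of $h_{j}$. The only difference is cosmetic: you argue by contradiction with a third index $j$, while the paper argues contrapositively that any index not equivalent to another must be equivalent to nothing in $\oln_{1}$.
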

\begin{proof}
 For each $k\in \oln$ define \[A_{k}:=\{t \in \oln | t \sim k\}.\]
Suppose there exists indices $i,k \in \oln_{1}$ with $p_{i} \neq p_{k}$.
The statement of Proposition~\ref{pqprop} can now be written as follows:
\begin{equation}
 \label{eq10}
q_{k}=-\big( h_{k} + \sum_{t \in  \oln \setminus A_{k}}h_{t} -b_{k} \big).
\end{equation}
  But now, since $i \not\sim k$, we have $i \in \oln \setminus A_{k}$ and $k \in \oln \setminus A_{i}$, so equality \eqref{eq5} becomes just $q_{i}=q_{k}$. 
Using our explicit expressions for $q_{i}$ and $q_{k}$ from \eqref{eq10} we see that this is
 equivalent to $b_{i}=b_{k}$ and $\{i\} \cup (\oln \setminus A_{i}) = \{k\} \cup (\oln \setminus A_{k})$,
which simplifies to
\begin{equation}
\label{eq11}
 A_{i} \setminus \{i\} = A_{k} \setminus \{k\}.
\end{equation}
Now for $j\in \oln_{2}$ we always have $i \sim j$ and for $j \in \oln_{1}$ we have $i\sim j$ only if $p_{i}=p_{j}$. 
Thus $A_{i} \cap \oln_{1}$ and $A_{k} \cap \oln_{1}$ are disjoint, so \eqref{eq11} implies $A_{i}=\oln_{2} \cup \{i\}$ and $A_{k}=\oln_{2} \cup \{k\}$.
 In particular, $p_{k} \neq p_{j}$ for any $j \in \oln_{1}$ with $j\neq k$.
 This shows that all the polynomials $p_{j}$ with $j\in \oln_{1}$ are pairwise distinct.
 \end{proof}

\subsection{Classifications of objects in $\ca{M}$}
We are now ready to classify objects in $\ca{M}$ for $\mathfrak{sl}_{n+1}$.
\begin{defi}
\label{structure}
Let $S \subset \oln$ and $b \in \bb{C}$. Define $M_{b}^{S}$ to be the set $\ca{P}$ equipped with the following $\g$-action:

\begin{displaymath}
\begin{array}{rcl}
h_{k}\cdot f&:=&h_{k}f, \qquad  k\in \oln;\\
\\
e_{i,n+1}\cdot f&:=&\begin{cases}
                     (\olh + b)\sigma_{i}f, & i \in S,\\
		     (\olh + b)(h_{i}-b-1)\sigma_{i}f, & i\not\in S;
                    \end{cases}
 \\
\\
e_{n+1,j}\cdot f&:=&\begin{cases}
                     -(h_{j}-b) \sigma_{j}^{-1}f, & j \in S,\\
		     -\sigma_{j}^{-1}f,  & j\not\in S;
                    \end{cases}
 \\
\\
e_{i,j}\cdot f&:=&\begin{cases}
                     (h_{j}-b)    			 \sigma_{i}\sigma_{j}^{-1}f, & i,j \in S,\\
							 \sigma_{i}\sigma_{j}^{-1}f, & i \in S, j\not\in S,\\
                     (h_{i}-b-1)(h_{j}-b)    		 \sigma_{i}\sigma_{j}^{-1}f, & i \not\in S,j \in S,\\
		     (h_{i}-b-1)    			 \sigma_{i}\sigma_{j}^{-1}f, & i,j\not\in S.\\
                    \end{cases}
\end{array}
\end{displaymath}
To write this more compactly we introduce the indicator functions $\delta_{P}$ where $P$ is some statement, and
$\delta_{P}=1$ if $P$ is true and $\delta_{P}=0$ if $P$ is false. Then the above can be written as follows.
 \[ \begin{cases}
 h_{k} \cdot f = h_{k}f ,&   \\ 
 e_{i,n+1} \cdot f = (\olh + b)(\delta_{i\in S} + \delta_{i \not\in S}(h_{i}-b-1))\sigma_{i}f, &   \\ 
 e_{n+1,j} \cdot f =-(\delta_{j\in S}(h_{j}-b) + \delta_{j \not\in S})\sigma_{j}^{-1}f, &   \\ 
 e_{i,j} \cdot f =(\delta_{i\in S} + \delta_{i \not\in S}(h_{i}-b-1))(\delta_{j\in S}(h_{j}-b) + \delta_{j \not\in S})     \sigma_{i}\sigma_{j}^{-1}f. &   \\ 
\end{cases}
\]

\end{defi}
\begin{thm}
Equipped with the action of Definition~\ref{structure}, $M_{b}^{S}$  is a $\g$-module for all $b\in \mathbb{C}$ and all $S \subset \oln$.
\end{thm}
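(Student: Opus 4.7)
My plan is to verify by direct computation that the action of Definition~\ref{structure} satisfies all the Lie bracket relations of $\g$ listed in Lemma~\ref{lemma1}, thereby defining a Lie algebra homomorphism $\g \to \mathrm{End}(\ca{P})$. First, I would read off from Definition~\ref{structure} the polynomials
\[
p_i := e_{i,n+1} \cdot 1 = (\olh + b)(\delta_{i \in S} + \delta_{i \notin S}(h_i - b - 1)), \qquad q_j := e_{n+1,j} \cdot 1 = -(\delta_{j \in S}(h_j - b) + \delta_{j \notin S}),
\]
and check that the prescribed action of $e_{i,j}$ for $i, j \in \oln$ with $i \neq j$ agrees with the formula $\bigl(p_i \sigma_i(q_j) - q_j \sigma_j^{-1}(p_i)\bigr) \sigma_i \sigma_j^{-1}$ from Proposition~\ref{prop1}. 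This amounts to a short case check on whether $i$ and $j$ lie in $S$, using that each $q_j$ depends only on $h_j$ so that $\sigma_i(q_j) = q_j$ for $i \neq j$.

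The bracket relations involving the Cartan $\fr{h}$ are essentially immediate: $[h_k, h_l] = 0$ holds since both sides act as multiplication, while $[h_k, e_{i,j}] = (\delta_{k,i} - \delta_{k,j})e_{i,j}$ follows from the fact that $\sigma_i^{\pm 1}$ shifts $h_i$ by $\mp 1$ and fixes the remaining variables. The core of the proof is therefore the three polynomial identities singled out in Lemma~\ref{lemma3e}: $p_i \sigma_i(p_j) = p_j \sigma_j(p_i)$ and $q_i \sigma_i^{-1}(q_j) = q_j \sigma_j^{-1}(q_i)$ for $i \neq j$, together with $p_i \sigma_i(q_i) - q_i \sigma_i^{-1}(p_i) = h_i + \olh$. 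Using $\sigma_i(\olh) = \olh - 1$, $\sigma_i^{-1}(\olh) = \olh + 1$, and $\sigma_i^{\pm 1}(h_i - b - 1) = h_i - b - 1 \mp 1$, each identity splits into a small number of subcases by membership in $S$ and reduces to a routine polynomial computation. For example, in the last identity with $i \in S$ one finds $-(\olh + b)(h_i - b - 1) + (h_i - b)(\olh + b + 1) = (h_i - b) + (\olh + b) = h_i + \olh$, and for $i \notin S$ an analogous cancellation yields the same answer.

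Finally, the remaining Lie bracket relations, namely $[e_{i,j}, e_{k,l}] = \delta_{j,k} e_{i,l} - \delta_{i,l} e_{k,j}$ for $i, j, k, l \in \oln$ with $i \neq j$ and $k \neq l$, need to be verified. My approach is to exploit the identity $e_{i,j} = [e_{i,n+1}, e_{n+1,j}]$ in $\ug$: by the Jacobi identity in the associative algebra $\mathrm{End}(\ca{P})$, each such bracket reduces to a combination of brackets among $e_{i,n+1}$, $e_{n+1,j}$, and $h_k$ that have already been checked. Alternatively, every such bracket may be verified directly by substitution of the explicit formulas in Definition~\ref{structure}. The main obstacle is not any deep difficulty but the organizational one of managing the case analysis: the split by $S$ produces a moderate combinatorial explosion of polynomial identities. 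A useful unifier is to introduce the abbreviations $\alpha_i := \delta_{i \in S} + \delta_{i \notin S}(h_i - b - 1)$ and $\beta_j := \delta_{j \in S}(h_j - b) + \delta_{j \notin S}$, so that $p_i = (\olh + b)\alpha_i$ and $q_j = -\beta_j$; the verifications then go through uniformly, with the repeatedly occurring quantities $\alpha_i \beta_i$ and $\sigma_i(\alpha_i)\beta_i$ keeping the bookkeeping manageable.
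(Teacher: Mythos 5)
Your setup is sound up to the last step: reading off $p_i$ and $q_j$, checking that the $e_{i,j}$-action of Definition~\ref{structure} equals $\bigl(p_i\sigma_i(q_j)-q_j\sigma_j^{-1}(p_i)\bigr)\sigma_i\sigma_j^{-1}$, and verifying the Cartan relations together with the three identities of Lemma~\ref{lemma3e} is the right list of primary checks. The gap is in the claim that all remaining brackets reduce, via the Jacobi identity in $\mathrm{End}(\ca{P})$, to brackets already checked. First, your list of remaining relations omits the mixed ones $[e_{i,j},e_{k,n+1}]$ and $[e_{i,j},e_{n+1,l}]$ with $i,j,k,l\in\oln$. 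Second, and more seriously, for $k\neq j$ these do not follow from Jacobi. Writing $X_i,Y_j,Z_{i,j}$ for the operators of $e_{i,n+1},e_{n+1,j},e_{i,j}$, the Jacobi identity gives
\[
[Z_{i,j},X_k]=[[X_i,Y_j],X_k]=[X_i,[Y_j,X_k]]+[[X_i,X_k],Y_j]=-[X_i,Z_{k,j}] \qquad (k\neq j),
\]
using $[X_i,X_k]=0$ and $[X_k,Y_j]=Z_{k,j}$; the right-hand side is another unknown bracket of the same type, and applying Jacobi to it just returns $[Z_{i,j},X_k]$. So the reduction is circular, and the required identity $[Z_{i,j},X_k]=\delta_{j,k}X_i$ is not a formal consequence of the pairwise brackets among the $X$'s, $Y$'s and $h$'s. (Equivalently: those pairwise relations do not constitute a presentation of $\g$, so a linear map respecting them need not extend to a Lie algebra homomorphism.) The same circularity affects $[Z_{i,j},Y_l]$ for $l\neq i$, and your proposed treatment of $[Z_{i,j},Z_{k,l}]$ itself expands to $[[Z_{i,j},X_k],Y_l]+[X_k,[Z_{i,j},Y_l]]$, i.e.\ it relies on exactly these unverified brackets.

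These mixed relations are where the real work lies: the paper's proof consists almost entirely of the direct verification of $[e_{n+1,k},e_{i,j}]\cdot f=\delta_{i,k}\,e_{n+1,j}\cdot f$ for all $k\in\oln$, done by substituting the explicit formulas and tracking the indicator functions $\delta_{i\in S}$, $\delta_{j\in S}$, $\delta_{k\in S}$. Your ``alternative'' of direct substitution is therefore not optional but necessary for this family of relations. Once $[Z_{i,j},X_k]=\delta_{j,k}X_i$ and $[Z_{i,j},Y_l]=-\delta_{i,l}Y_j$ have been established by direct computation, your Jacobi argument does correctly dispose of the remaining brackets $[e_{i,j},e_{k,l}]$ with all four indices in $\oln$, so the proof is repairable, but as written its main reduction fails precisely on the relations that carry the content of the theorem.
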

\begin{proof}
First note that for all $k \in \oln$ and all $i,j \in {\bf n+1}$ with $i \neq j$ we have $e_{i,j}\cdot h_{k}=h_{k}-\delta_{i,k}+\delta_{j,k}$.
But then for all $f \in M_{b}^{S}$ we have
\begin{align*}
  e_{i,j} \cdot h_{k} \cdot f - h_{k} \cdot e_{i,j} \cdot  f &= e_{i,j} \cdot (h_{k}  f) - h_{k} ( e_{i,j} \cdot  f) \\
&= (h_{k}-\delta_{i,k}+\delta_{j,k})(e_{i,j} \cdot  f) - h_{k} ( e_{i,j} \cdot  f)\\
&= (-\delta_{i,k}+\delta_{j,k})e_{i,j} \cdot  f \\
&= [e_{i,j},h_{k}] \cdot f
\end{align*}
where we used Lemma~\ref{lemma1} in the last step. Thus the relation
\[[e_{i,j},h_{k}] \cdot f = e_{i,j} \cdot h_{k} \cdot f - h_{k} \cdot e_{i,j} \cdot  f\]
holds for all $k \in \oln$ and all $i,j \in {\bf n+1}$ with $i \neq j$.

The remaining relations are more time consuming to check. We first check that for all $i,j,k \in \oln$  (with $i \neq j$) and all $f \in M_{b}^{S}$ we have
\begin{equation}
 \label{eq12}
e_{n+1,k} \cdot e_{i,j} \cdot f - e_{i,j} \cdot  e_{n+1,k} \cdot f = [e_{n+1,k},e_{i,j}] \cdot f.
\end{equation}
Expressing the left side of \eqref{eq12} explicitly we get
\begin{align*}
e_{n+1,k} &\cdot e_{i,j} \cdot f - e_{i,j} \cdot  e_{n+1,k} \cdot f\\
=&-(\delta_{k\in S}(h_{k}-b)+\delta_{k\not\in S}) \sigma_{k}^{-1} \big( (\delta_{i \in S} + \delta_{i \not\in S}(h_{i}-b-1))(\delta_{j \in S}(h_{j}-b)+\delta_{j \not\in S})  \sigma_{i}\sigma_{j}^{-1}f \big)\\
&+ (\delta_{i \in S} + \delta_{i \not\in S}(h_{i}-b-1))(\delta_{j \in S}(h_{j}-b)+\delta_{j \not\in S}) \sigma_{i}\sigma_{j}^{-1} \big( (\delta_{k\in S}(h_{k}-b)+\delta_{k\not\in S}) \sigma_{k}^{-1} f\big) \\
=&-(\delta_{k\in S}(h_{k}-b)+\delta_{k\not\in S})(\delta_{i \in S} + \delta_{i \not\in S}(h_{i}-b-1+\delta_{i,k}))(\delta_{j \in S}(h_{j}-b + \delta_{j,k})+\delta_{j \not\in S})\sigma_{k}^{-1}\sigma_{i}\sigma_{j}^{-1}f \\
&+ (\delta_{i \in S} + \delta_{i \not\in S}(h_{i}-b-1))(\delta_{j \in S}(h_{j}-b)+\delta_{j \not\in S})(\delta_{k\in S}(h_{k}-b-\delta_{i,k}+\delta_{j,k})+\delta_{k\not\in S}) \sigma_{i}\sigma_{j}^{-1}\sigma_{k}^{-1} f.\\
\end{align*}
Now, when expanding this expression, all terms not containing $\delta_{i,k}$ or $\delta_{j,k}$ will cancel by symmetry. Thus, by factoring out $\delta_{i,k}$, $\delta_{j,k}$ and $\delta_{i,k}\delta_{j,k}$ separately,
 we can rewrite this as
\begin{align*}
e_{n+1,k} &\cdot e_{i,j} \cdot f - e_{i,j} \cdot  e_{n+1,k} \cdot f\\
=& \delta_{i,k} \Big[ -(\delta_{k\in S}(h_{k}-b)+\delta_{k\not\in S})(\delta_{i \not\in S})(\delta_{j \in S}(h_{j}-b)+\delta_{j \not\in S}) \\
 & \qquad +(\delta_{i \in S} + \delta_{i \not\in S}(h_{i}-b-1))(\delta_{j \in S}(h_{j}-b)+\delta_{j \not\in S})(-\delta_{k\in S}) \Big]\sigma_{i}\sigma_{j}^{-1}\sigma_{k}^{-1} f \\
 &+\delta_{j,k} \Big[ -(\delta_{k\in S}(h_{k}-b)+\delta_{k\not\in S})(\delta_{i \in S} + \delta_{i \not\in S}(h_{i}-b-1))(\delta_{j\in S})\\
 & \qquad +(\delta_{i \in S} + \delta_{i \not\in S}(h_{i}-b-1))(\delta_{j \in S}(h_{j}-b)+\delta_{j \not\in S})(\delta_{k\in S})   \Big]\sigma_{i}\sigma_{j}^{-1}\sigma_{k}^{-1} f \\
 &+\delta_{i,k}\delta_{j,k} \Big[ -(\delta_{k\in S}(h_{k}-b)+\delta_{k\not\in S})(\delta_{i \not\in S} \delta_{j \not\in S})  \Big]\sigma_{i}\sigma_{j}^{-1}\sigma_{k}^{-1} f \\
=& \delta_{i,k} \Big[ -(\delta_{i\in S}(h_{i}-b)+\delta_{i\not\in S})(\delta_{i \not\in S})(\delta_{j \in S}(h_{j}-b)+\delta_{j \not\in S}) \\
 & \qquad +(\delta_{i \in S} + \delta_{i \not\in S}(h_{i}-b-1))(\delta_{j \in S}(h_{j}-b)+\delta_{j \not\in S})(-\delta_{i\in S}) \Big]\sigma_{j}^{-1} f \\
 &+\delta_{j,k} \Big[ -(\delta_{j\in S}(h_{j}-b)+\delta_{j\not\in S})(\delta_{i \in S} + \delta_{i \not\in S}(h_{i}-b-1))(\delta_{j\in S})\\
 & \qquad +(\delta_{i \in S} + \delta_{i \not\in S}(h_{i}-b-1))(\delta_{j \in S}(h_{j}-b)+\delta_{j \not\in S})(\delta_{j\in S})   \Big]\sigma_{i}\sigma_{j}^{-2} f \\
 &+\delta_{i,k}\delta_{j,k} \Big[ -(\delta_{k\in S}(h_{k}-b)+\delta_{k\not\in S})(\delta_{i \not\in S} \delta_{j \in S})  \Big]\sigma_{j}^{-1} f. \\
\end{align*}
Now, since $i \neq j$, we have $\delta_{i,k}\delta_{j,k}=0$ so the last term is zero.  Using the fact that $\delta_{P \land Q}=\delta_{P}\delta_{Q}$ and that $\delta_{\lnot P}=1-\delta_{P}$,
 the above expression can be further simplified to
\begin{align*}
e_{n+1,k} &\cdot e_{i,j} \cdot f - e_{i,j} \cdot  e_{n+1,k} \cdot f\\
=& \delta_{i,k} \Big[ -(\delta_{i \not\in S})(\delta_{j \in S}(h_{j}-b)+\delta_{j \not\in S}) \\
 & \qquad -(\delta_{j \in S}(h_{j}-b)+\delta_{j \not\in S})(\delta_{i\in S}) \Big]\sigma_{j}^{-1} f \\
 &+\delta_{j,k} \Big[ -(h_{j}-b)(\delta_{i \in S} + \delta_{i \not\in S}(h_{i}-b-1))(\delta_{j\in S})\\
 & \qquad +(\delta_{i \in S} + \delta_{i \not\in S}(h_{i}-b-1))(h_{j}-b)(\delta_{j\in S})   \Big]\sigma_{i}\sigma_{j}^{-2} f \\
=&\delta_{i,k} \Big[  -(\delta_{i\in S} + \delta_{i \not\in S})(\delta_{j \in S}(h_{j}-b)+\delta_{j \not\in S}) \Big]\sigma_{j}^{-1} f\\
=&-\delta_{i,k}   (\delta_{j \in S}(h_{j}-b)+\delta_{j \not\in S}) \sigma_{j}^{-1} f \\
=&\delta_{i,k} e_{n+1,j} \cdot f \\
=&[e_{n+1,k},e_{i,j}] \cdot f. 
\end{align*}
Thus \eqref{eq12} holds for all $i,j,k \in \oln$ (with $i \neq j$) and all  $f \in M_{b}^{S}$ as required.

The remaining relations lead to similar equations which are left to the reader to verify.
\end{proof}

\begin{thm}
\label{class}
For $n>1$, let $\ca{S}$ be the full subcategory of $\ca{M}$ consisting of all modules of form
\[\mathrm{F}_{a}(M_{b}^{S})\qquad \text{ and }\qquad  \mathrm{F}_{a}\circ \mathrm{F}_{\tau}(M_{b}^{S})\]
for all $a=(a_{1}, a_{2}, \ldots, a_{n}, 1) \in (\bb{C}^{*})^{n} \times \{1\}$, $S \subset \oln$ and $b\in \bb{C}$. Then $\ca{S}$ is a skeleton in  $\ca{M}$.
\end{thm}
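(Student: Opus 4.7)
The plan is to prove both (i) every $M\in\mathcal{M}$ is isomorphic to a module in one of the two listed families, and (ii) no two distinct members of those families are isomorphic.

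For (i), I would start from the data $(p_{1},\ldots,p_{n},q_{1},\ldots,q_{n})$ attached to $M$ by Proposition~\ref{prop1}. By Lemma~\ref{lemmatech}\eqref{tec6} the sequence $(\deg_{i}p_{i})_{i\in\oln}$ lies in $\{0,1\}$ or in $\{1,2\}$; in the first case I pre-apply $\mathrm{F}_{\tau}$ via Proposition~\ref{symmprop}, which swaps the roles of the $p_{i}$'s and the $(-\tau)$-twisted $q_{i}$'s and, by Lemma~\ref{lemmatech}\eqref{tec2}, moves the new degree pattern into $\{1,2\}$. Then Remark~\ref{lead1rmk} lets me normalize $\mathtt{c}_{i}(p_{i})=1$ through a suitable $\mathrm{F}_{a}$.

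Next I would invoke Lemma~\ref{lemmadeg12}: either all the degree-one $p_{i}$'s coincide or they are pairwise distinct. In the ``all equal'' case, set $S:=\{i:\deg_{i}p_{i}=1\}$; Proposition~\ref{pqprop} identifies each $p_{k}$ with $k\in S$ as $\olh+b$ for a common $b\in\mathbb{C}$, while Lemma~\ref{lemmatech}\eqref{tec5} combined with Lemma~\ref{lemmapform} identifies each $p_{j}$ with $j\notin S$ as $(\olh+b)(h_{j}-b-1)$; the $q_{i}$'s are then pinned down by Lemma~\ref{lemma3e}\eqref{ll3}, so the module data matches Definition~\ref{structure} and $M\simeq\mathrm{F}_{a}(M_{b}^{S})$. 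In the ``pairwise distinct'' sub-case, I would apply $\mathrm{F}_{\tau}$ one more time and renormalize; the swap $p\leftrightarrow q$ converts distinct degree-one entries into coinciding ones, reducing the problem to the previous case, and thus yielding $M\simeq\mathrm{F}_{a}\circ\mathrm{F}_{\tau}(M_{b}^{S})$.

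For (ii), any isomorphism in $\mathcal{M}$ is, after restriction to $\uh$, an isomorphism of two copies of $_{\uh}\uh$, hence multiplication by an invertible element of $\uh=\mathbb{C}[h_{1},\ldots,h_{n}]$, which must be a non-zero scalar. Consequently, isomorphic objects have identical tuples $(p_{i},q_{i})$; together with Lemma~\ref{functorlemma}\eqref{fun7} this pins $(a,b,S)$ uniquely within each family. To separate the two families I use the degree pattern: a first-family module has $\deg_{i}p_{i}\geq 1$ for every $i$, while a second-family module has $\deg_{i}p_{i}=0$ for every $i\notin S$; the residual overlap $S=\oln$ is excluded because in $M_{b}^{\oln}$ all $p_{i}$'s coincide whereas in the normalized $\mathrm{F}_{\tau}(M_{b}^{\oln})$ they are pairwise distinct.

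The principal obstacle is the careful bookkeeping in the existence argument, especially in the pairwise-distinct sub-case: one has to track precisely how the $\beta_{i}$-coefficients supplied by Lemma~\ref{lemmapform} transform under $\mathrm{F}_{\tau}$ and the subsequent $\mathrm{F}_{a}$-normalizations, and verify that the output really matches the prescriptions of Definition~\ref{structure} for some choice of $S$ and $b$.
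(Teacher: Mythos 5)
Your overall strategy --- attach $(p_{1},\ldots,q_{n})$ via Proposition~\ref{prop1}, normalize with $\mathrm{F}_{\tau}$ and $\mathrm{F}_{a}$, read off $S$ and $b$ from the $p_{i}$'s, and separate isomorphism classes by noting that any isomorphism in $\ca{M}$ is a scalar and hence forces the tuples $(p_{i},q_{i})$ to coincide --- is the same as the paper's, and your uniqueness argument is essentially fine. The existence half, however, has two genuine gaps. First, the case $\oln_{1}=\{i:\deg_{i}p_{i}=1\}=\varnothing$ (every $p_{i}$ quadratic, target $M_{b}^{\varnothing}$) is not covered by your case split at all: the dichotomy of Lemma~\ref{lemmadeg12} is vacuous, Proposition~\ref{pqprop} produces no polynomial $\olh+b$ to serve as the common anchor, and Lemma~\ref{lemmatech}\eqref{tec5} has no degree-one $p_{k}$ to apply. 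Here one must show directly that all the quadratics share one fixed linear factor, using Lemma~\ref{lemmatech}\eqref{tec7} pairwise together with a counting argument on prime factors; and for $n=3$ there survives the configuration $p_{1}=\alpha\beta$, $p_{2}=\alpha\gamma$, $p_{3}=\beta\gamma$ with $b_{1}=b_{2}=b_{3}=-\tfrac{1}{2}$, which satisfies every necessary condition you invoke yet is not a module. None of the cited lemmas exclude it; the only way to kill it is to check a commutator such as $[e_{2,4},e_{1,3}]$ explicitly, as the paper does.

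Second, the pairwise-distinct branch does not reduce to the all-equal branch in the way you claim once $\oln_{2}\neq\varnothing$: applying $\mathrm{F}_{\tau}$ replaces $\deg_{i}p_{i}$ by $2-\deg_{i}p_{i}$, so the new degree pattern lands in $\{0,1\}$ and your own first normalization step sends you straight back. More importantly, for $n=3$ with exactly one quadratic $p_{k}$ and $p_{i}\neq p_{j}$ both dividing it, one again arrives at a tuple ($p_{k}=p_{i}p_{j}$, $b_{i}=b_{j}=-\tfrac{1}{2}$) that passes all the stated necessary conditions but is not a module, and it must be eliminated by an explicit bracket computation; for $N=1$, $n>3$, and for $N\geq 2$, the pairwise-distinct possibility with $\oln_{2}\neq\varnothing$ has to be excluded by a separate factor-counting argument which your sketch omits. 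The root problem in both gaps is the same: Lemmas~\ref{lemma3e}--\ref{lemmadeg12} are only necessary conditions on $(p_{1},\ldots,q_{n})$, and there remain tuples satisfying all of them that do not define $\g$-module structures, so a classification argument built solely on those lemmas cannot close without the final direct verifications.
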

\begin{proof}
Suppose we are given a module structure on $\ca{P}$ determined by $(p_{1}, \ldots, p_{n},q_{1}, \ldots, q_{n})$ in accordence with Proposition~\ref{prop1}.
First assume that $\deg_{k} p_{k} \in \{1,2\}$ for all $k \in \oln$. By Lemma~\ref{lemmalc} and Remark~\ref{lead1rmk}, it suffices
 to prove that $\mathtt{c}_{k}(p_{k})=1$ for all $k\in \oln$ implies that the module is isomorphic to
 either $\mathrm{F}_{a}(M_{b}^{S})$ or  $\mathrm{F}_{a}\circ \mathrm{F}_{\tau}(M_{b}^{S})$ for suitable choices of $a \in (\bb{C}^{*})^{n+1}$, $b\in \bb{C}$ and $S \subset \oln$. 
Thus we assume that the leading coefficient of each $p_{k}$ is $1$ and $\deg_{k}p_{k} \in \{1,2\}$.
Note that for $n=1$ we obtain the same set of modules as in Section~\ref{sl2}.
 Define 
\[N:=\#\{i \in \oln \; | \; \deg_{i} p_{i}=2\}.\]
We consider first the case $N=0$ when all $p_{k}$ have $k$-degree $1$.
By Proposition~\ref{pqprop}, each $p_{k}$ and thus the module structure is completely determined by the relation $\sim$ from \eqref{eq9}, which in this case becomes an equivalence relation:
\[i \sim k \Longleftrightarrow p_{i}=p_{k}.\]

By Lemma~\ref{lemmadeg12}, either all $p_{i}$ are pairwise distinct or they all coincide.
Using the explicit expression of $p_{i}$ from Proposition~\ref{pqprop}, in the first case (i.e. all $p_{i}$ are distinct) we have
 \[p_{i}=h_{i} + b_{i} \text{ for all $i \in \oln$},\] which implies that $q_{i}=\olh-b_{i}$ for all $i$ and by Remark~\ref{divcor} all the scalars $b_{i}$ coincide (write $b:=b_{i}$) and the module is isomorphic to
$\mathrm{F}_{(1,1,\ldots, 1,-1)} \circ \mathrm{F}_{\tau}(M_{b}^{\oln})$. In the other case (i.e. all $p_{i}$ coincide) we get
 \[p_{i}=\olh + b \text{ for all $i \in \oln$},\] which makes the module isomorphic to $M_{b}^{\oln}$. 

Next, suppose $N=1$ and let $\deg_{k}p_{k}=2$. For $n=2$, the unique $p_{i}$ of degree $1$ divides both $p_{i}$ and $p_{k}$, so the module is isomorphic to $M_{b}^{\{i\}}$ for some $b \in \bb{C}$.
For $n=3$, write $\{1,2,3\}=\{i,j,k\}$. If $p_{i}=p_{j}$, then the module is isomorphic to $M_{b}^{\oln \setminus \{k\}}$ for some $b \in \bb{C}$. If $p_{i} \neq p_{j}$, since both divide $p_{k}$,
 we get $p_{k}=p_{i}p_{j}$ and $q_{k}=-1$. This implies that $p_{i}=h_{i}+h_{k}+b_{i}$ and $p_{j}=h_{j}+h_{k}+b_{j}$ and, by Lemma~\ref{lemma3e}, we get first $b_{i}+b_{j}+1=0$ and then $b_{i}=b_{j}=-\frac{1}{2}$.
 But this would determine the module structure completely and one can check that we for example would have $e_{2,4} \cdot e_{1,3} \cdot 1 - e_{1,3} \cdot e_{2,4} \cdot 1 \neq [e_{2,4},e_{1,3}] \cdot 1$, so
 this module structure is not possible.

 Finally, for $n>3$ we have at least three degree $1$ polynomials
 dividing $p_{k}$ and the latter polynomial has two prime factors. Thus at least two of the divisors coincide and hence they all coincide by Lemma~\ref{lemmadeg12}. This gives a module isomorphic to  $M_{b}^{\oln \setminus \{k\}}$
 for some $b\in \bb{C}$. Thus the statement of the theorem holds for $N=1$.

We now turn to the case $2 \leq N < n$. Let $i,k \in \oln$ be distinct indices such that $\deg_{i}p_{i} = 2 = \deg_{k}p_{k}$. Then $p_{i}$ and $p_{k}$ share a common prime factor $\alpha$
 by Lemma~\ref{lemmatech}. So we have $p_{i}=\alpha \beta_{i}$ and $p_{k} = \alpha \beta_{k}$ where $\alpha, \beta_{i}, \beta_{k}$ are pairwise distinct. 
But then all $p_{j}$ for $j \in \oln_{1}$ share a factor with both $p_{i}$ and $p_{k}$, so $p_{j}= \alpha$ for all $j \in \oln_{1}$. Thus $\alpha | p_{j}$ for all $j \in \oln$ and
 our module is isomorphic to $M_{b}^{\oln_{1}}$ for some $b \in \bb{C}$. 

Finally, we consider the case $N=n$ where all polynomials $p_{i}$ have degree $2$. For $n=2$, 
the polynomials $p_{1}$ and $p_{2}$ share a common factor which then divides all $p_{i}$ for $i \in \{1,2\}$ and the
 module is isomorphic to $M_{b}^{\varnothing}$ for some $b \in \bb{C}$. For $n \geq 3$, suppose that not all polynomials $p_{i}$ share the same factor. Then there exist distinct indices $i,j,k$ such that
\[p_{i} = \alpha \beta, \; p_{j} = \alpha \gamma\quad  \text{ and }\quad  p_{k} = \beta \gamma\]
for some pairwise distinct $\alpha,\beta,\gamma$. But then a fourth $p_{j}$ cannot share a common factor with both $p_{i},p_{j},$ and $p_{k}$ so for $n>3$ all $p_{i}$ share a fixed factor $\alpha$ for all $i \in \oln$,
 and the module is isomorphic to $M_{b}^{\varnothing}$ for some $b \in \bb{C}$. Thus the only remaining case is
$n=3$ and
\[p_{1} = \alpha \beta, \; p_{2} = \alpha \gamma\quad \text{ and }\quad  p_{3} = \beta \gamma.\]
But then we explicitly have
\[\alpha=h_{1}+h_{2}+b_{1}, \beta=h_{1}+h_{3}+b_{2}\quad  \text{ and }\quad  \gamma = h_{2} + h_{3} + b_{3}.\] Moreover, 
$q_{1}=q_{2}=q_{3}=-1$, so Lemma~\ref{lemma3e} implies that $b_{1}=b_{2}=b_{3}=-\frac{1}{2}$. But this does not give a module structure, since, for example,
 $e_{2,4} \cdot e_{1,3} \cdot 1 - e_{1,3} \cdot e_{2,4} \cdot 1 \neq [e_{2,4},e_{1,3}] \cdot 1$.

We have now proved the theorem in case $\deg_{k}p_{k} \in \{1,2\}$ for all $k \in \oln$. Suppose now this is not the case for some module $M$. By Lemma~\ref{lemmatech}, we then have
 $\deg_{k}p_{k} \in \{0,1\}$ for all $k \in \oln$ and $\deg_{k}q_{k} \in \{1,2\}$ for all $k \in \oln$. Thus we apply the theorem above to $\mathrm{F}_{\tau}(M)$ instead, giving
 either $\mathrm{F}_{\tau}(M) \simeq \mathrm{F}_{a}(M_{b}^{S})$ or $\mathrm{F}_{\tau}(M) \simeq \mathrm{F}_{\tau} \circ \mathrm{F}_{a}(M_{b}^{S})$ for some $a,b,S$.
 Applying $F_{\tau}$ again we get either $M \simeq \mathrm{F}_{a^{-1}} \circ \mathrm{F}_{\tau}(M_{b}^{S})$ or $M \simeq \mathrm{F}_{a^{-1}}(M_{b}^{S})$.

Thus we have proved that every module in $\ca{M}$ is isomorphic to one of the representatives in the theorem.
 It remains to show that different module structures are not isomorphic.

Any morphism $\varphi$ in $\ca{M}$ is determined by its value at $1$ since $\varphi(f)=f \varphi(1)$.
Suppose now that $\varphi: M \rightarrow M'$ is an isomorphism, where we identify $M$ and $M'$ with $\ca{P}$ as $\uh$-modules. Then \[1=\varphi(\varphi^{-1}(1))=\varphi^{-1}(1)\varphi(1),\]
which shows that $\varphi(1)\in \bb{C}^{*}$ and $\varphi=c{\bf 1}$ for some $c\in \bb{C}^{*}$. For all $i \in \oln$ we define $p_{i}:=e_{i,n+1} \cdot 1 \in M$
 and $p_{i}':=e_{i,n+1} \cdot 1 \in M'$. Then we have
\[cp_{i}'=p_{i}'\sigma_{i} \varphi(1) = e_{i,n+1} \cdot \varphi(1) = \varphi(e_{i,n+1} \cdot 1) = \varphi(p_{i}) = p_{i} \varphi(1)=cp_{i},\]
which gives $p_{i}=p_{i}'$. Thus two modules can be isomorphic only if 
for every $i \in \oln$ the action of the element $e_{i,n+1}$  on these two modules is given by the same
polynomial. Similarly, the polynomials $q_{j}$ must coincide in isomorphic modules. 
Now in a module $\mathrm{F}_{a}(M_{b}^{S})$, the set of $p_{i}$'s are uniquely determined by $a,b,S$ so there are no nontrivial isomorphisms between objects of this form, and hence the same also holds for
 objects of form $\mathrm{F}_{a}  \circ F_{\tau} (M_{b}^{S})$. Finally, for the module $\mathrm{F}_{a}(M_{b}^{S})$, since $n > 1$ each $p_{i}$ has an irreducible component $(\olh + b)$ which $q_{i}$ does not have.
 This shows that there are no isomorphisms $\mathrm{F}_{a}  \circ F_{\tau} (M_{b}^{S}) \rightarrow \mathrm{F}_{a'} (M_{b'}^{S'})$.
\end{proof}

\begin{rmk}
 Note that the theorem applies also to $n=1$, except that in this case we have isomorphisms $M_{b}^{\bf 1} \simeq \mathrm{F}_{(1,-1)} \circ \mathrm{F}_{\tau}(M_{b}^{\bf 1})$ for each $b \in \bb{C}$.
 Via the relations $M_{b} \simeq M_{b}^{\bf 1}$ and $M_{b}' \simeq \mathrm{F}_{\tau}(M_{b}^{\varnothing})$, we recover again the results of Theorem~\ref{sl2classi}.
\end{rmk}

\subsection{Simples and subquotients}
\label{simplicity}
We now show that the modules of $\ca{M}$ generically are simple.
We start with some sufficient conditions for simplicity.
\begin{thm}
\label{simples}
\begin{enumerate}[$($i$)$]
 \item\label{simples1}  For $b \in \bb{C}$ with $(n+1)b \not\in \bb{N}_{0}$, the module $M_{b}^{\oln}$ is simple.
 \item\label{simples2}  For $S \neq \oln$, the module $M_{b}^{S}$ is simple.
\end{enumerate}
\end{thm}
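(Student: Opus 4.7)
The proof hinges on a structural observation: any $\mathfrak{sl}_{n+1}$-submodule $N$ of $M_b^S$ is automatically stable under $\uh \simeq \ca{P}$, which on $M_b^S$ acts by polynomial multiplication; hence $N$ is an ideal of $\ca{P}$. The task reduces to ruling out proper nonzero ideals that are stable under all the Chevalley operators.

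For part \eqref{simples2}, I would pick an index $i \in \oln \setminus S$ and exploit the ``pure-shift'' operators visible in Definition~\ref{structure}: $e_{n+1,i}$ acts as $-\sigma_i^{-1}$, $e_{n+1,k}$ as $-\sigma_k^{-1}$ for $k \notin S$ with $k \neq i$, and $e_{k,i}$ as $\sigma_k\sigma_i^{-1}$ for $k \in S$, all without any multiplicative polynomial factor. Given $0 \neq f \in N$, the finite-difference operator $(\mathrm{id}-\sigma_i^{-1})^{\deg_i f}$ applied to $f$ produces, by iterated use of Lemma~\ref{deg}, the nonzero element $\pm (\deg_i f)!\,\mathtt{c}_i(f) \in N \cap \ca{P}_i$. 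I then iterate over each $k \neq i$ with the analogous reduction: $\sigma_k^{-1}$ when $k \notin S$, and $\sigma_k\sigma_i^{-1}$ when $k \in S$ (which on an element of $\ca{P}_i$ simplifies to $\sigma_k$). After $n$ successive reductions I reach a nonzero element of $N \cap \bigcap_k \ca{P}_k = \bbc$, so $1 \in N$ and $N = M_b^S$.

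For part \eqref{simples1}, every Chevalley operator on $M_b^\oln$ carries a nontrivial factor (either $\olh+b$ or $h_j-b$), so the pure-shift strategy fails; instead I would argue geometrically via the zero set $V:=V(N) \subset \bbc^n$. The containments $e_{i,n+1}\cdot N \subset N$ and $e_{n+1,j}\cdot N \subset N$ translate directly into shift rules on $V$: if $h^* \in V$ and $\olh(h^*) \neq -b$ then $h^*-\mathbf{e}_i \in V$ for every $i$, and if $h_j(h^*) \neq b$ then $h^*+\mathbf{e}_j \in V$ for every $j$. Assume $V$ is nonempty and proper. If some $h^* \in V$ had $\olh(h^*)+b \notin \bb{Z}$ and every $h_k(h^*)-b \notin \bb{Z}$, no shift would ever be blocked on $h^* + \bb{Z}^n$, which therefore would sit inside $V$, and the Zariski density of $\bb{Z}^n$ in $\bbc^n$ would force the contradiction $V = \bbc^n$. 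Hence $V$ lies in the countable union of resonance hyperplanes $\{\olh = -b+l\} \cup \bigcup_j\{h_j = b+m\}$ with $l,m \in \bb{Z}$, and by algebraicity $V$ is contained in a finite subcollection of them. Tracing the orbit of any $h^* \in V$ under the forced shifts, backward $-\mathbf{e}_i$-chains terminate only when $\olh = -b$ and forward $+\mathbf{e}_j$-chains only when $h_j = b$, so a finite orbit forces $\olh(h^*) = -b + l$ and $h_j(h^*) = b - m_j$ with $l, m_j \in \bb{N}_0$; summing these conditions yields $(n+1)b = l + \sum_j m_j \in \bb{N}_0$, which is excluded by hypothesis. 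The orbit is therefore infinite, and a connectivity argument then identifies its Zariski closure with all of $\bbc^n$, giving the final contradiction.

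The technical heart of part \eqref{simples1}, and the step I expect to require the most careful bookkeeping, is this final density claim: that an infinite orbit under shifts selectively blocked by the resonance hyperplanes $\{\olh = -b\}$ and $\{h_j = b\}$ is actually Zariski dense in $\bbc^n$. A total block (simultaneously $\olh = -b$ \emph{and} every $h_j = b$) would force $(n+1)b = 0$, so at every orbit point at least one shift direction remains available; one must then verify that these partial shifts suffice to propagate the orbit across every coordinate hyperplane, preventing confinement to any proper affine subvariety.
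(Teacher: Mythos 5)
Your reduction of submodules to ideals of $\ca{P}$ is correct, and your argument for part (ii) is complete and genuinely different from the paper's. The paper constructs, for each $S$, the basis $H_{k_1,\ldots,k_n}=\prod_i\prod_{j=0}^{k_i}(h_i-b+j)$ together with operators $A_i$ (built from $e_{n+1,i}$ and multiplication by $h_i-b$) that act diagonally with one-dimensional joint eigenspaces, so every submodule is spanned by a subset of this basis; it then reduces any basis vector to $1$ by explicit coefficient computations. Your finite-difference reduction through the pure-shift operators attached to a fixed $i\notin S$ is shorter and avoids the eigenbasis entirely (and the bookkeeping is sound: each step preserves membership in $N$ and in the previously achieved $\ca{P}_k$'s, and Lemma~\ref{deg} keeps the output nonzero). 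What the paper's heavier machinery buys is that part (i) and the analysis of the reducible case $S=\oln$, $(n+1)b\in\bb{N}_0$ in the following theorem come essentially for free from the same coefficient formula.

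For part (i) there is a gap at exactly the point you flag. The shift rules on $V=V(N)$ are correct, as is the identity $(n+1)b=l+\sum_j m_j$ when every chain from $h^*$ terminates. But the closing step --- ``the orbit is infinite, and a connectivity argument identifies its Zariski closure with $\bbc^n$'' --- does not work as stated: an infinite orbit need not be dense (if only one forward direction is unblocked you a priori get only a ray $h^*+\bb{N}_0\mathbf{e}_{j_0}$, which for $n\geq 2$ lies in a hyperplane), and ``at least one shift direction remains available at every point'' does not by itself push the orbit off that hyperplane. The argument does close, but you must interleave the two kinds of shifts. If the backward cone is unblocked, i.e.\ $\olh(h^*)+b\notin\bb{N}_0$, then $h^*-\bb{N}_0^n\subset V$ and you are done. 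Otherwise $l:=\olh(h^*)+b\in\bb{N}_0$, and since not every forward direction can also be blocked (else $(n+1)b=l+\sum_j m_j\in\bb{N}_0$), some ray $h^*+\bb{N}_0\mathbf{e}_{j_0}$ lies in $V$. At $h^*+m\mathbf{e}_{j_0}$ one has $\olh+b=l+m$, so backward shifts fill in every $h^*+m\mathbf{e}_{j_0}-v$ with $v\in\bb{N}_0^n$, $|v|\leq l+m$. For each fixed $v$ the union over $m$ of these truncated cones contains a tail of the line $t\mapsto h^*+t\mathbf{e}_{j_0}-v$, so any polynomial vanishing on $V$ vanishes on $h^*+\bbc\,\mathbf{e}_{j_0}-\bb{N}_0^n$, a product of infinite sets, hence is zero; thus $V=\bbc^n$, a contradiction. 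With that paragraph inserted, your proof of (i) is valid and independent of the paper's computation of the coefficient $-(k_i+1)\bigl((n+1)b-k_i-\sum_{j\neq i}(k_j+1)\bigr)$.
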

\begin{proof}
Fix a subset $S \subset \oln$. We start by constructing a new basis of $\uh$. For each $i \in \oln$ and for all integers $k \geq -1$, we define \[H_{k}^{(i)} := \prod_{j=0}^{k}(h_{i}-b+j).\]
Then the set \[B:=\{H_{k_{1},k_{2}, \ldots, k_{n}}:=H_{k_{1}}^{(1)}\cdots H_{k_{n}}^{(n)} |  k_{1}, \ldots , k_{n} \geq -1\}\]
is a basis for $\uh$. For each $i \in \oln$ we also define
\[ A_{i}:= \begin{cases}
            e_{n+1,i} + (h_{i}-b)  & i \in S\\
	    (h_{i}-b)(e_{n+1,i}+1) & i \not\in S
           \end{cases}.\]
Then for each $i \in \oln$ we have
\[A_{i}\cdot H_{k}^{(i)} = -(k+1)H_{k}^{(i)},\]
 and since $A_{i}$ commutes with $H_{k}^{(j)}$ for all $j \neq i$ we deduce that \[A_{i} \cdot H_{k_{1},k_{2}, \ldots, k_{n}} = -(k_{i}+1) H_{k_{1},k_{2}, \ldots, k_{n}},\]
so $H_{k_{1},k_{2}, \ldots, k_{n}}$ is an eigenvector of $A_{i}$ with eigenvalue $-(k_{i}+1)$. This shows that the elements $A_{1}, \ldots, A_{n}$ act
 diagonally in the basis $B$, where each generalized eigenspace is of dimension $1$.  
 We conclude that any submodule of $M_{b}^{S}$ is the span of some subset of elements from $B$.

 Let $V$ be a nonzero submodule of $M_{b}^{S}$, and let $f$ be a nonzero element of $V$.

Now let $H_{k_{1},k_{2}, \ldots, k_{n}}$ be a basis element occurring with nonzero coefficient in $f$ expressed in the basis $B$. Then  $H_{k_{1},k_{2}, \ldots, k_{n}} \in V$.
 We shall show that for each $i \in \oln$, if $k_{i} \neq -1$, we have also $H_{k_{1}, \ldots, k_{i}-1, \ldots, k_{n}} \in V$;
 it will then follow by induction that $1=H_{-1, \ldots, -1} \in V$ which implies $V=M_{b}^{S}$.

 Fix $i \in \oln$. If $i \not\in S$ we note that 
 \[(e_{n+1,i}+1) \cdot H_{k_{1},k_{2}, \ldots, k_{n}} = -(k_{i}+1)\sigma_{i}^{-1}(H_{k_{1}, \ldots, k_{i}-1, \ldots,  k_{n}}),\]
 so for $k_{i} \geq 0$, by considering the $i$-degree, we see that when we express this in the basis $B$ the coefficient of $H_{k_{1}, \ldots, k_{i}-1, \ldots,  k_{n}}$ is nonzero which implies that
 $H_{k_{1}, \ldots, k_{i}-1, \ldots,  k_{n}} \in V$ as required.

On the other hand, if $i \in S$, we act by $e_{i,n+1}$ on $H_{k_{1},k_{2}, \ldots, k_{n}}$ and express the result in our basis $B$:
\begin{align*}
  e_{i,n+1} &\cdot H_{k_{1},k_{2}, \ldots, k_{n}} =(\olh +b)( H_{k_{1}, \ldots,  k_{n}}  -(k_{i}+1) H_{k_{1}, \ldots, k_{i}-1, \ldots,  k_{n}})\\
=& \olh H_{k_{1}, \ldots,  k_{n}} - (k_{i}+1)\olh H_{k_{1}, \ldots, k_{i}-1, \ldots,  k_{n}}\\
&+b ( H_{k_{1}, \ldots,  k_{n}}  -(k_{i}+1) H_{k_{1}, \ldots, k_{i}-1, \ldots,  k_{n}})  \\
=& (\sum_{j=1}^{n} (h_{j}-b+k_{j}+1)) H_{k_{1}, \ldots,  k_{n}} + (nb - \sum_{j=1}^{n}(k_{j}+1))H_{k_{1}, \ldots,  k_{n}} \\
&- (k_{i}+1)((h_{i}-b+k_{i}) + \sum_{j \neq i} (h_{j}-b+k_{j}+1)) H_{k_{1}, \ldots, k_{i}-1, \ldots,  k_{n}}\\
&\qquad -(k_{i}+1) (nb -k_{i} - \sum_{j \neq i} (k_{j}+1)) H_{k_{1}, \ldots, k_{i}-1, \ldots,  k_{n}}\\
&+b ( H_{k_{1}, \ldots,  k_{n}}  -(k_{i}+1) H_{k_{1}, \ldots, k_{i}-1, \ldots,  k_{n}})  \\
=& (\sum_{j=1}^{n}  H_{k_{1}, \ldots, k_{j}+1, \ldots,  k_{n}}) + (nb - \sum_{j=1}^{n}(k_{j}+1))H_{k_{1}, \ldots,  k_{n}} \\
&- (k_{i}+1)(H_{k_{1}, \ldots,  k_{n}} + \sum_{j \neq i}  H_{k_{1},  \ldots, k_{j}+1, \ldots,  k_{i}-1, \ldots,  k_{n}})\\
&\qquad -(k_{i}+1) (nb -k_{i} - \sum_{j \neq i} (k_{j}+1)) H_{k_{1}, \ldots, k_{i}-1, \ldots,  k_{n}}\\
&+b ( H_{k_{1}, \ldots,  k_{n}}  -(k_{i}+1) H_{k_{1}, \ldots, k_{i}-1, \ldots,  k_{n}})  
\end{align*}
Thus we see that the coefficient of $H_{k_{1},  \ldots,  k_{i}-1, \ldots,  k_{n}}$ in   $e_{i,n+1} \cdot H_{k_{1},k_{2}, \ldots, k_{n}}$
 is precisely 
\[-(k_{i}+1)((n+1)b-k_{i}-\sum_{j \neq i}(k_{j}+1)).\]
Now if $(n+1)b$ is not a natural number, this quantity is nonzero since $k_{i}+\sum_{j \neq i}(k_{j}+1) \in \bb{N}_{0}$ while $(n+1)b \not\in \bb{N}_{0}$.
 This proves the induction step and implies the simplicity of $M_{b}^{S}$ for $(n+1)b \not\in \bb{N}_{0}$, which in particular proves \eqref{simples1}.

 To prove \eqref{simples2}, assume that $S \neq \oln$.
 Suppose again that $H_{k_{1}, \ldots, k_{n}}\in V$ with $k_{i} \neq -1$ for some $i$. We observe from the calculation above that the coefficient of
 $H_{k_{1}, \ldots, k_{i}-1, \ldots ,k_{j}+1, \ldots, k_{n}}$ in $e_{i,n+1} \cdot H_{k_{1}, \ldots,  k_{n}}$
 is nonzero for each $j \neq i$, thus $H_{k_{1}, \ldots, k_{i}-1, \ldots ,k_{j}+1, \ldots, k_{n}}$ belongs to $V$ also. Acting repeatedly by $e_{i,n+1}$ for all $i \in S$ we obtain
 finally $H_{k_{1}', \ldots,  k_{n}'} \in V$ where $k_{j}' = -1$ for all $j \in S$. Acting repeatedly by $(e_{n+1,k}+1)$ for all $k \not\in S$ we finally obtain $1 \in V$ so $V$ is simple.
\end{proof}

Since the functors $\mathrm{F}_{a}$ and $\mathrm{F}_{\tau}$ from Section~\ref{functors} are equivalences we also have the following corollary.
\begin{cor}
 For $(n+1)b \not\in \bb{N}_{0}$ or $S \neq \oln$, the modules $\mathrm{F}_{a}(M_{b}^{S})$ and $\mathrm{F}_{a} \circ \mathrm{F}_{\tau}(M_{b}^{S})$ are simple.
\end{cor}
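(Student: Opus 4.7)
The plan is to deduce this corollary directly from Theorem~\ref{simples} by transporting simplicity along the auto-equivalences $\mathrm{F}_a$ and $\mathrm{F}_\tau$. Theorem~\ref{simples} already establishes that $M_b^S$ itself is simple under each of the two hypotheses ($(n+1)b\notin\bb{N}_0$, or $S\neq\oln$), so no further analysis of the module structure of $M_b^S$ is required.

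The key observation is Lemma~\ref{functorlemma}\eqref{fun8}, which tells us that both $\mathrm{F}_a$ and $\mathrm{F}_\tau$ are auto-equivalences of the category $\g$-Mod. A standard categorical fact is that any equivalence of abelian (or even just additive) categories sends simple objects to simple objects: if $F$ is an equivalence and $N$ is a proper nonzero subobject of $F(M)$, then $F^{-1}(N)$ is a proper nonzero subobject of $M$, contradicting simplicity of $M$. Applying this to $F=\mathrm{F}_a$ and to $F=\mathrm{F}_a\circ \mathrm{F}_\tau$ (the composition of two auto-equivalences is an auto-equivalence), we get that $\mathrm{F}_a(M_b^S)$ and $\mathrm{F}_a\circ \mathrm{F}_\tau(M_b^S)$ are simple whenever $M_b^S$ is.

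The only thing that requires a brief remark is that the categorical simplicity preservation is the right notion here: simplicity of an object in $\g$-Mod is precisely the absence of proper nonzero $\g$-submodules, which is preserved by any equivalence of categories. Since Lemma~\ref{functorlemma}\eqref{fun6} additionally guarantees $\mathrm{F}_a\ca{M}=\ca{M}=\mathrm{F}_\tau\ca{M}$, the resulting simple modules do indeed lie in $\ca{M}$, confirming the statement. There is essentially no obstacle here; the corollary is a one-line consequence of Theorem~\ref{simples} and Lemma~\ref{functorlemma}\eqref{fun8}.
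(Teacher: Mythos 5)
Your proposal is correct and is exactly the paper's argument: the corollary is deduced from Theorem~\ref{simples} by noting that $\mathrm{F}_{a}$ and $\mathrm{F}_{\tau}$ are auto-equivalences (Lemma~\ref{functorlemma}) and hence preserve simplicity. No further comment is needed.
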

It turns out that any simple module in $\ca{M}$ are of the form in the above corollary.
The only case remaining is when both $S= \oln$ and $(n+1)b$ is a natural number, this is covered in the following theorem.
\begin{thm}
For $(n+1)b \in \bb{N}_{0}$, the module $M_{b}^{\oln}$ has a unique proper submodule $W$ which is simple and belongs to $\mathfrak{M}$ but not to $\ca{M}$.
 The corresponding simple quotient has dimension ${(n+1)b +n \choose n}$ and has lowest weight $\lambda$,
 where $\lambda(h_{i}) = b -\delta_{i,1}(n+1)b$.
\end{thm}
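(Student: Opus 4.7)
The plan is to work in the basis $B=\{H_{k_1,\ldots,k_n}:k_i\geq -1\}$ introduced in the proof of Theorem~\ref{simples}, and to identify $W$ explicitly as
\[W:=\operatorname{span}\{H_{k_1,\ldots,k_n} : D(k)\geq N+1\},\]
where $N:=(n+1)b\in\bb{N}_0$ and $D(k_1,\ldots,k_n):=\sum_{i=1}^{n}(k_i+1)$. First I would verify that $W$ is a submodule directly from the basis expansions already computed in the proof of Theorem~\ref{simples}: the operators $h_k$, $e_{n+1,i}$, and $e_{i,j}$ (with $i,j\in\oln$) send each $H_{k}$ to a combination of basis elements $H_{k'}$ with $D(k')\geq D(k)$, so they preserve $W$; the only operator that can lower $D$ is $e_{i,n+1}$, whose ``downshift'' coefficient of $H_{k-e_i}$ equals $-(k_i+1)(N-D(k)+1)$ and therefore vanishes precisely when $D(k)=N+1$.

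Next I want to show $W$ is the \emph{unique} proper nonzero submodule. As in the proof of Theorem~\ref{simples}, every submodule is a span of a subset of $B$; let $V$ be such a submodule with index set $T$, and pick $k^0\in T$ minimizing $D$. If all coordinates of $k^0$ equal $-1$ then $1\in V$ and $V=M_b^{\oln}$; otherwise, choose $i$ with $k_i^0\geq 0$ and observe that the minimality of $D(k^0)$ forces the coefficient of $H_{k^0-e_i}$ in $e_{i,n+1}\cdot H_{k^0}$ to vanish, so $D(k^0)=N+1$. Upward closure via $e_{n+1,j}$ gives $H_{k^0+\alpha}\in V$ for all $\alpha\in\bb{Z}_{\geq 0}^n$. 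To reach every $k$ with $D(k)\geq N+1$ I would exploit $e_{i,j}$ for $i,j\in\oln$: its expansion contains the ``swap'' term $H_{k-e_i+e_j}$ with coefficient $-(k_i+1)$, nonzero as soon as $k_i\geq 0$. A short combinatorial argument shows any two tuples with the same $D$-value are connected by swap moves (provided the pivoted coordinate is nonnegative, which can always be arranged), so $V\supseteq W$ and hence $V=W$. Uniqueness of the proper submodule forces $W$ to be simple.

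The category claims are then short. Since $W$ is a $\uh$-submodule of $\uh$, it is an ideal of the Noetherian ring $\ca{P}$, hence finitely generated over $\uh$, so $W\in\mathfrak{M}$. Its leading-term ideal contains every monomial $h_1^{m_1}\cdots h_n^{m_n}$ with $\sum m_i=N+1$, and therefore contains the power $\mathfrak{m}^{N+1}$ of the maximal graded ideal $\mathfrak{m}=(h_1,\ldots,h_n)$; if $W$ were free of rank $1$ over $\uh$ it would be principal $(g)$, forcing the leading term of $g$ to divide every degree $(N+1)$-monomial, which (for $n\geq 2$) is impossible unless $g$ is a unit, contradicting properness of $W$. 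Thus $W\notin\ca{M}$.

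For the quotient, a stars-and-bars count of tuples $(k_1,\ldots,k_n)$ with $k_i\geq -1$ and $D(k)\leq N$ gives $\binom{N+n}{n}$, and because $M_b^{\oln}$ has composition length two, the quotient is automatically a simple finite-dimensional $\g$-module. Finally I would exhibit a lowest weight vector $v:=H_{N-1,-1,\ldots,-1}+W$: every generator of $\mathfrak{n}_-$ either produces a term of $D\geq N+1$ (hence in $W$) or a ``swap-down'' term whose coefficient $k_i+1=0$ (since $k_i=-1$ for $i\geq 2$). Using the identity $h_iH_k^{(i)}=H_{k+1}^{(i)}+(b-k-1)H_k^{(i)}$ to reduce $h_i\cdot v$ modulo $W$ yields $h_1\cdot v=(b-N)v$ and $h_i\cdot v=bv$ for $i\geq 2$, giving $\lambda(h_i)=b-(n+1)b\,\delta_{i,1}$ as claimed. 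The main obstacle is the uniqueness step: ensuring the combinatorial reachability argument via $e_{i,j}$-swaps works for every starting point $k^0$ with $D(k^0)=N+1$ requires some bookkeeping, particularly to handle starting points where many coordinates equal $-1$.
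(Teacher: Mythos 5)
Your proposal is correct and follows essentially the same route as the paper: the same submodule $W$ (the paper writes the condition as $\sum_i k_i \geq (n+1)b-(n-1)$, which is your $D(k)\geq N+1$), the same basis $B$ and downshift-coefficient computation from Theorem~\ref{simples}, the same lowest weight vector $H_{(n+1)b-1,-1,\ldots,-1}$, and the same stars-and-bars count. You in fact supply details the paper leaves implicit — the explicit uniqueness/connectivity argument and the verification that $W\in\mathfrak{M}\setminus\ca{M}$ (your non-principality argument correctly requires $n\geq 2$, consistent with the fact that for $n=1$ the submodule does lie in $\ca{M}$ by Lemma~\ref{simpleproof}).
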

\begin{proof}
Define \[W:=span\{H_{k_{1}, \ldots, k_{n}} | \sum_{i=1}^{n} k_{i} \geq (n+1)b-(n-1)\}.\]
This is a submodule of $M_{b}^{\oln}$. From the calculation in the proof of Theorem~\ref{simples} it is clear that any vector in $M_{b}^{\oln} \slash W$ can be reduced to $1$ so the module
 $M_{b}^{\oln} \slash W$ is simple. Similarly one shows that $W$ is simple.
 Define \[v:= \prod_{k=0}^{(n+1)b-1}(h_{1}-b+k) = H_{(n+1)b-1,-1,-1, \ldots, -1} \in M_{b}^{\oln} \slash W.\]
 Now $(h_{1}-b+(n+1)b) \cdot v = H_{(n+1)b,-1,-1, \ldots, -1}=0$ in the quotient. Similarly, for $i>1$ we have $(h_{i}-b) \cdot v =0$, so $v$ is a weight vector.
From Definition~\ref{structure} we see that $e_{n+1,j} \cdot v = -(h_{j}-b)\sigma_{j}^{-1} v = 0$ for all $j \in \oln$. For $1 \leq j < i \leq n$ we also
 obtain $e_{i,j} \cdot v = (h_{j}-b)\sigma_{i}\sigma_{j}^{-1} v = 0$, since $i \neq 1$. This means that $v$ is killed by $\mathfrak{n_{-}}$.

Thus we have showed that $v$ is a lowest weight vector in $M_{b}^{\oln} \slash W$ of weight $\lambda$ where $\lambda(h_{i}) = b -\delta_{i,1}(n+1)b$ as stated in the theorem.
The dimension of $M_{b}^{\oln} \slash W$ equals the number of ways to choose $n$ integers $k_{i} \geq-1$ such that their sum is less than $(n+1)b-(n-1)$.
Thus we obtain \[\dim M_{b}^{\oln} \slash W = { (n+1)b +n \choose n}.\]
\end{proof}

\noindent Department of Mathematics, Uppsala University, Box 480, SE-751 06, Uppsala, Sweden, email: jonathan.nilsson@math.uu.se

\end{document}